\newtheorem{theorem}{Theorem}[section]
\DeclareSymbolFont{cyrletters}{OT2}{wncyr}{m}{n}\DeclareMathSymbol{\Sha}{\mathalpha}{cyrletters}{"58}
\renewcommand{\phi}{{\varphi}}
\newcommand{\Zp}{\mathbf{Z}_p}
\renewcommand{\geq}{\geqslant}
\renewcommand{\leq}{\leqslant}
\newcommand{\Ehat}{{\widehat{E}}}
\newcommand{\smat}[1]{\left( \begin{smallmatrix} #1 \end{smallmatrix} \right)}
\newcommand{\links}{\left(\begin{array}{cc}}
\newcommand{\rechts}{\end{array}\right)}
\newcommand{\bai}{\left[\begin{array}{cc}}
\newcommand{\dai}{\end{array}\right]}
\newcommand{\hidari}{\left(\begin{array}{c}}
\newcommand{\migi}{\end{array}\right)}
\newcommand{\C}{\mathbb{C}}
\newcommand{\DD}{\mathbb{D}}
\newcommand{\Q}{\mathbb{Q}}
\newcommand{\Z}{\mathbb{Z}}
\newcommand{\gp}{{\mathfrak p}}
\newcommand{\gq}{{\mathfrak q}}
\newcommand{\dg}{{\mathfrak g}}
\newcommand{\gf}{{\mathfrak f}}
\newcommand{\gm}{{\mathfrak m}}
\newcommand{\bz}{\mathbf{z}}
\newcommand{\bH}{\mathbf{H}}
\newcommand{\calF}{\mathcal{F}}
\newcommand{\calG}{\mathcal{G}}
\newcommand{\calL}{\mathcal{L}}
\newcommand{\calP}{\mathcal{P}}
\newcommand{\F}{{\mathcal F}_{ss}}
\newcommand{\G}{{\mathcal G}}
\renewcommand{\H}{\mathcal{H}}
\newcommand{\X}{{\mathcal X}}
\newcommand{\tor}{{\mathrm{tor}}}
\newcommand{\Log}{\mathcal Log}
\newcommand{\Qbar}{\overline{\Q}}
\newcommand{\Zhat}{\hat{\Z}}
\newcommand{\Reg}{\operatorname{Reg}}
\DeclareMathOperator{\Frac}{Frac}
\DeclareMathOperator{\proj}{\mathrm proj}
\newcommand{\Iw}{\mathrm{Iw}}
\DeclareMathOperator{\rk}{rk}
\DeclareMathOperator{\tr}{tr}
\newcommand{\GalQ}{{\Gal}(\Qbar/\Q)}
\newcommand{\Spec}{\operatorname{Spec}}
\newcommand{\Gal}{\operatorname{Gal}}
\newcommand{\cor}{\operatorname{cor}}
\newcommand{\Hom}{\operatorname{Hom}}
\newcommand{\Aut}{\operatorname{Aut}}
\newcommand{\Char}{\operatorname{Char}}
\newcommand{\GL}{\operatorname{GL}}
\newcommand{\ord}{\operatorname{ord}}
\newcommand{\col}{\operatorname{Col}}
\newcommand{\loc}{\operatorname{loc}}
\newcommand{\Wan}{\operatorname{Wan}}
\newcommand{\im}{\operatorname{Im}}
\newcommand{\Sel}{\operatorname{Sel}}
\renewcommand{\O}{\mathcal{O}}
\newcommand{\directsum}{\oplus} 
\newcommand{\injects}{\hookrightarrow}
\newcommand{\tensor}{\otimes} 
\newtheorem{auxiliary proposition}[theorem]{Auxiliary Proposition}
\newtheorem{choice}[theorem]{Choice}
\newtheorem{conjecture}[theorem]{Conjecture}
\newtheorem{convention}[theorem]{Convention}
\newtheorem{corollary}[theorem]{Corollary}
\newtheorem{definition}[theorem]{Definition}
\newtheorem{lemma}[theorem]{Lemma}
\newtheorem{main conjecture}[theorem]{Main Conjecture}
\newtheorem{main theorem}[theorem]{Main Theorem}
\newtheorem{modesty proposition}[theorem]{Modesty Proposition}
\newtheorem{open problem}[theorem]{Open Problem}
\newtheorem{proposition}[theorem]{Proposition}
\newtheorem{remark}[theorem]{Remark}
\newtheorem{convergence lemma}[theorem]{Convergence Lemma}
\newtheorem{corrected lemma}[theorem]{Corrected Lemma}
\newtheorem{growth lemma}[theorem]{Growth Lemma}
\newtheorem{coefficient lemma}[theorem]{Integrality Lemma}
\newtheorem{interpolation lemma}[theorem]{Interpolation Lemma}
\newtheorem{kernel lemma}[theorem]{Kernel Lemma}
\newtheorem{limit lemma}[theorem]{Limit Lemma}
\newtheorem{tandem lemma}[theorem]{Modesty Lemma}
\newtheorem{zero-finding lemma}[theorem]{Zero-Finding Lemma}
\newcommand{\badLs}{\links L_{\alpha\alpha} & L_{\beta\alpha} \\ L_{\alpha\beta} & L_{\beta\beta} \rechts} 
\newcommand{\goodLs}{\links L^{\sharp\sharp} & L^{\flat\sharp} \\ L^{\sharp\flat} & L^{\flat\flat} \rechts
}
\begin{document}

\title{The Iwasawa Main Conjecture for elliptic curves at odd supersingular primes}
\author{Florian Sprung}
\email{fsprung@math.princeton.edu}
\address{Princeton University and the Institute for Advanced Study, School of Mathematics, 1 Einstein Dr or 304 Fine Hall, Princeton, NJ 08540, USA}

\classification{Primary: 11G40, 11F67. Secondary: 11R23, 11G05}
\keywords{Iwasawa Theory, elliptic curve, Birch and Swinnerton-Dyer}
\thanks{This material is based upon work supported by the National Science Foundation under agreement No. DMS-1128155. Any opinions, findings and conclusions or recommendations expressed in this material are those of the author and do not necessarily reflect the views of the National Science Foundation.}

\begin{abstract}
In this paper, we prove the Iwasawa main conjecture for elliptic curves at an odd supersingular prime $p$. Some consequences are the $p$-parts of the leading term formulas in the Birch and Swinnerton-Dyer conjectures for analytic rank $0$ or $1$.\end{abstract}

\maketitle


\pagestyle{plain}

\section{Introduction}
\label{sec:introduction}
Iwasawa theory gained a place in the study of elliptic curves in the $1970'$s in the work of Mazur. Given an elliptic curve $E$ over $\Q$ and a prime $p$, the goal (or main conjecture) of this program is to relate a $p$-adic $L$-function attached to $E$ with a Selmer group, which contains information about the arithmetic of $E$ in subfields of the cyclotomic $\Z_p$-extension $\Q_\infty$ of $\Q$. Understanding why these objects are closely related is difficult because they are very different in nature: Selmer groups are algebraic, while $p$-adic $L$-functions are analytic. The main conjecture has important consequences for the Birch and Swinnerton-Dyer conjectures for $E$. 

When $p$ is of good ordinary reduction (i.e. $p\nmid a_p:=p+1-\#E(\mathbb{F}_p)$), this program is now largely complete. Mazur and Swinnerton-Dyer introduced the $p$-adic $L$-function in \cite{msd}, while Mazur studied the corresponding Selmer group in \cite{mazur}. In the complex multiplication (CM) case, Rubin \cite{rubin} proved the main conjecture by finding a suitable Euler system. Kato proved half the main conjecture in the non-CM case by finding his zeta element Euler system, showing that the $p$-adic $L$-function is included in the characteristic ideal associated to the Selmer group \cite{kato}. Finally, Skinner and Urban \cite{SU} gave a converse to Kato's theorem via a GU(2,2) Eisenstein series method, settling the main conjecture, for a large class of elliptic curves.  

The supersingular case (i.e. when $p|a_p$) has been more challenging. The main obstacle was that the objects are not well-behaved: The $p$-adic $L$-function is not an element of the Iwasawa algebra, and correspondingly the Selmer group is not a torsion Iwasawa module. Perrin-Riou \cite{PR} and Kato \cite{kato} made important progress in our understanding of the supersingular case, and formulated main conjectures. For a formulation of the main conjecture in the spirit of the ordinary case, Pollack (when $a_p=0$ \cite{pollack}) found a pair of $p$-adic $L$-functions $L^\sharp$ and $L^\flat$ inside the Iwasawa algebra (for the general $p|a_p$ case, see \cite{shuron}). Kobayashi (again when $a_p=0$ \cite{kobayashi}) found two corresponding submodules $\Sel^\sharp(E/\Q_\infty)$ and $\Sel^\flat(E/\Q_\infty)$ of the classical Selmer group, and formulated a pair of main conjectures in terms of these objects (for $p|a_p$, see \cite{shuron}). Pollack and Rubin \cite{P+R} settled the case of complex multiplication by adapting Rubin's Euler system. Relying on Kato's Euler system, half of the main conjecture was proved \cite{kobayashi,shuron}, i.e. $L^{\sharp/\flat}$ is each included in the characteristic ideal coming from $\Sel^{\sharp/\flat}(E/\Q_\infty)$. In the case $a_p=0$, Wan \cite{wan} settled the main conjecture by giving a converse to this theorem via a GU(3,1) Eisenstein series method. 

The purpose of this paper is to prove the main conjecture in the general supersingular case:

\begin{theorem}Let $E/\Q$ be an elliptic curve and $p>2$ a prime of supersingular reduction. Assume that $E$ has square-free conductor.
Then $L^{\sharp/\flat}$ are each characteristic power series of the Iwasawa module $\Hom(\Sel^{\sharp/\flat}(E/\Q_\infty),\Q_p/\Z_p)$.
\end{theorem}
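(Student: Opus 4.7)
The plan is to combine the known Kato-style Euler system divisibility with a converse divisibility that, by the Weil bound, reduces in most cases to Wan's theorem.

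\textbf{Step 1: Kato's divisibility.}
The Euler system of Kato's zeta elements, decomposed via the Coleman-logarithm matrix (following Kobayashi's approach for $a_p=0$ and the author's extension to arbitrary $p \mid a_p$), gives the inclusion
\[
L^{\sharp/\flat} \in \operatorname{char}_\Lambda \Hom(\Sel^{\sharp/\flat}(E/\Q_\infty), \Q_p/\Z_p),
\]
which I take as input.

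\textbf{Step 2: Converse divisibility in most cases.}
By the Weil bound $|a_p| \leq 2\sqrt{p}$, the condition $p \mid a_p$ with $p \geq 5$ forces $a_p = 0$, and Wan's theorem, proved via a GU(3,1) Eisenstein-congruence construction, gives the converse divisibility directly. Similarly, the case $p=3$ with $a_3=0$ is covered by Wan. The residual new case is therefore $p = 3$ with $a_3 = \pm 3$.

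\textbf{Step 3: The residual case $p = 3$, $a_3 \neq 0$.}
Here I would reduce to Wan's theorem via a congruence. Specifically, produce an auxiliary elliptic curve $E'/\Q$ of square-free conductor, with $a_3(E') = 0$ and a $G_{\Q}$-isomorphism $E[3] \cong E'[3]$, obtained by level-raising at a suitable auxiliary prime $\ell$ via a Ribet-type existence argument. Applying Wan's theorem to $E'$ and transferring the main conjecture back to $E$ requires: (a) a comparison of $\sharp/\flat$ Selmer groups modulo $3$, using the shared residual representation together with the fact that $a_3 \equiv 0 \pmod 3$ for both curves matches the $\sharp/\flat$ local conditions at $p=3$ modulo $3$; and (b) a matching of $L^{\sharp/\flat}(E)$ with $L^{\sharp/\flat}(E')$ modulo $3$, using that the Coleman-logarithm matrix depends on $a_p$ in a controlled way. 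Combined with Kato's divisibility from Step 1, this yields equality of characteristic ideals for $E$.

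\textbf{Main obstacle.}
The technical heart is Step 3. The $\sharp/\flat$ decomposition depends on the Frobenius roots $\alpha, \beta$ of $X^2 - a_p X + p$, and tracking these modulo $3$ under the congruence of Hecke eigenvalues requires a careful analysis of the matrix $\operatorname{Log}_{\alpha,\beta}$. Moreover, the existence of a level-raising prime $\ell$ preserving both square-free conductor and supersingularity at $3$ for $E'$ must be established, and Wan's hypotheses must be verified for $E'$. The square-free conductor hypothesis on $E$ is used both to enable the level-raising and to secure the residual-irreducibility conditions needed on the auxiliary curve.
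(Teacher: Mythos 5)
Your Steps 1 and 2 are fine as far as they go: Kato's divisibility (in the form of \cite[Theorem 7.16]{shuron}) is indeed one of the two inputs, and the Weil bound does isolate $p=3$, $a_3=\pm 3$ as the only genuinely new case. But Step 3, which you correctly identify as the technical heart, is not an outline of a proof — it is the open problem itself, and the specific reduction you propose does not work.

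First, the existence of the auxiliary curve $E'$ fails at the outset. A $G_\Q$-isomorphism $E[3]\cong E'[3]$ with both curves of good reduction at $3$ only forces $a_3(E')\equiv a_3(E)\equiv 0\pmod 3$, i.e. $a_3(E')\in\{0,\pm 3\}$; no congruence or level-raising argument can force $a_3(E')=0$ exactly, and Ribet-type level raising produces a newform whose Hecke eigenvalue at $3$ is only controlled modulo a prime above $3$ (and which need not even have $\Q$-rational coefficients, hence need not come from an elliptic curve). Second, even granting $E'$, a mod-$3$ congruence of residual representations does not transfer an equality of characteristic ideals in $\Z_3[[X]]$: you would need a Greenberg--Vatsal-type comparison of $\mu$- and $\lambda$-invariants on both the algebraic and analytic sides, and here the $\sharp/\flat$ local conditions at $3$ genuinely differ between $E$ and $E'$ because they are built from the matrix $\Log(X)$ and from trace-compatible systems of local points whose defining relations involve $a_p$ itself (the relation $\tr\, c_{(n,m)}=a_pc_{(n-1,m)}-c_{(n-2,m)}$), not just $a_p\bmod 3$. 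The paper in fact stresses that the arguments that work for $a_p=0$ break down for $a_p\neq 0$ and that the $\sharp$ and $\flat$ conjectures can no longer be treated separately.

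The actual proof takes a different and uniform route: it never reduces to $a_p=0$. One passes to an auxiliary imaginary quadratic field $K$ in which $p$ splits, constructs integral $\sharp/\flat$ Beilinson--Flach classes $\Delta_\sharp,\Delta_\flat$ by factoring the matrix $\Log(X)$ out of the Loeffler--Zerbes classes $\Delta_\alpha,\Delta_\beta$, builds two-variable $\sharp/\flat$ Coleman maps and dual ``Wan maps'' at the two primes above $p$, and uses Poitou--Tate exact sequences to prove that the four $\sharp/\flat$-$\sharp/\flat$ main conjectures, the $\Delta_{\sharp/\flat}$ main conjectures, and a Greenberg-type main conjecture are all equivalent up to controlled factors. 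The input replacing ``Wan's theorem'' in your Step 2 is Wan's Rankin--Selberg inclusion $\Char(\X^{\forall 0})\subseteq(L_p^{\forall 0})$, which is proved without any $a_p=0$ hypothesis; combined with Kato's divisibility and a descent/control argument back to $\Q$, this closes the argument. So the gap in your proposal is not a missing lemma but the absence of the entire mechanism that handles $a_3\neq 0$.
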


Our corollaries concern the leading term formula in the Birch and Swinnerton-Dyer conjecture.

\begin{corollary}Assume that $L(E,1)\neq0$. Then under the assumptions of the theorem, we have

$$\left|\frac{L(E,1)}{\Omega}\right|_p=\left|\#\Sha(E/\Q)\prod_l c_l\right|_p$$

\end{corollary}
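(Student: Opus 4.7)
The plan is to deduce the $p$-part of the BSD formula from the main theorem by the standard device of specializing the main conjecture at the trivial character $T=0$. Setting $X^{\sharp/\flat} := \Hom(\Sel^{\sharp/\flat}(E/\Q_\infty), \Q_p/\Z_p)$ and writing $f^{\sharp/\flat}$ for a characteristic power series of $X^{\sharp/\flat}$, the main theorem gives $|L^{\sharp/\flat}(0)|_p = |f^{\sharp/\flat}(0)|_p$, so it suffices to compute each side in terms of the quantities appearing in the corollary.

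For the analytic side, I would invoke the interpolation formula at $T=0$, which expresses $L^\sharp(0)$ and $L^\flat(0)$ in terms of $L(E,1)/\Omega$ together with an explicit local factor at $p$ arising from the change-of-basis matrix (involving the roots $\alpha, \beta$ of $X^2 - a_p X + p$) that relates the Pollack-style $\sharp/\flat$-splitting to the $\alpha, \beta$-stabilizations of the Mazur--Swinnerton-Dyer $p$-adic $L$-function. For the algebraic side, I would invoke the Kobayashi-type control theorem for the $\sharp/\flat$-Selmer groups, extended to the general $p\mid a_p$ case in \cite{shuron}, which identifies $X^{\sharp/\flat}/T$ with the Pontryagin dual of $\Sel(E/\Q)$, modulo an explicit local correction at $p$. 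The standard algebra of torsion $\Z_p[[T]]$-modules then converts $|f^{\sharp/\flat}(0)|_p$ into $|X^{\sharp/\flat}/T|_p \cdot |X^{\sharp/\flat}[T]|_p^{-1}$.

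Under the hypothesis $L(E,1) \neq 0$, Kato's theorem gives $E(\Q)$ and $\Sha(E/\Q)$ finite, so $X^{\sharp/\flat}[T]$ vanishes and $f^{\sharp/\flat}(0) \neq 0$; the usual short exact sequence yields $\#\Sel(E/\Q) = \#\Sha(E/\Q) \cdot \#E(\Q)_\tors^2$. The $p$-part of $\#E(\Q)_\tors$ contributes trivially for the $p>2$ supersingular primes under consideration, so modulo the two local matchings at $p$ described next, the corollary is reduced to the main theorem.

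The main technical issue, and the principal obstacle, is to verify that the local factor appearing on the analytic side from the interpolation formula cancels exactly with the local correction appearing on the algebraic side from the control theorem, so that what survives is precisely $\prod_l c_l \cdot \#\Sha(E/\Q)$. This cancellation is the entire point of the $\sharp/\flat$-formalism: the two choices of $p$-adic $L$-function and Selmer group are calibrated so that no spurious Euler-like factor at $p$ remains, and the Tamagawa number $c_p$ at the supersingular prime enters correctly. This matching has already been worked out in \cite{kobayashi} for $a_p=0$ and in \cite{shuron} for general $p\mid a_p$, so granted the main theorem as input, the corollary follows without further calculation.
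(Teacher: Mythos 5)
Your overall strategy---specialize the main conjecture at the trivial character, compute $|f^\bullet(0)|_p$ via a control theorem on the algebraic side and the interpolation formula on the analytic side, and match local terms---is the same as the paper's. But there is a genuine gap at exactly the point you identify as ``the main technical issue'' and then dismiss: the local analysis at $p$ and the control theorem for the $\sharp/\flat$-Selmer groups are \emph{not} already available in \cite{kobayashi} (which assumes $a_p=0$) or in \cite{shuron} (which does not treat the BSD deduction), and the paper devotes its entire final subsection to supplying them. Concretely, one must show that the localization map $r_p\colon H^1(\Q_p,E[p^\infty])/(E(\Q_p)\tensor\Q_p/\Z_p)\to H^1(\Q_{p,\infty},E[p^\infty])/E^\bullet_{\infty,p}$ is injective, so that no spurious factor at $p$ enters; Greenberg's argument for this fails because it uses ordinarity, and the paper instead runs a snake-lemma argument on the diagram comparing $\ker\col^\bullet_p/X$ with $\ker\col^\bullet_{p,0}$, using the explicit structure of the Coleman maps from \cite{shuron}. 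Likewise the ``Kobayashi-type control theorem'' you invoke has to be re-proved for general $p\mid a_p$ (the paper's Small Control Theorem). Saying the corollary ``follows without further calculation'' skips the actual content of the proof.

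Two smaller but substantive misstatements. First, your claim that $\X^\bullet[T]$ vanishes is neither justified nor wanted: in the paper's bookkeeping the coinvariants $(\Sel^\bullet(E/\Q_\infty)_p)_{\Gamma}$ (dually, $\X^\bullet[T]$) are generally nontrivial, and it is precisely the comparison of the invariants and coinvariants (Lemmas analogous to Greenberg's 4.3 and 4.7) that produces the factor $\prod_l c_l^{(p)}$. Second, you locate all corrections ``at $p$'', whereas the Tamagawa numbers $c_l$ arise as $|\ker(r_l)|$ for the bad primes $l\neq p$; the whole point of the computation at $p$ is that $\ker(r_p)$ is trivial, so that nothing beyond $\#\Sha(E/\Q)\prod_l c_l$ survives once one also checks $|E(\Q)_p|=1$ (irreducibility of $E[p]$) and replaces $\Sel$ by $\Sha$ via Kolyvagin. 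With these local computations actually carried out, your outline becomes the paper's proof.
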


Here, $\Sha(E/\Q)$ is the Tate--Shafarevich group of $E/\Q$, $c_l$ are the Tamagawa numbers, and $\Omega$ is the N\'{e}ron period.
Combined with the corresponding result in the ordinary case \cite[Theorem 3.35]{SU}, this gives the leading term formula up to powers of $2$ and bad primes.

\begin{corollary}Suppose that $\ord_{s=1}L(E,s)=1$. Then under the assumption of the theorem,

$$\left|\frac{L'(E,1)}{\Reg(E/\Q)\Omega}\right|_p=\left|\frac{\#\Sha(E/\Q)\prod_l c_l}{\#E(\Q)^2_\tor}\right|_p$$

\end{corollary}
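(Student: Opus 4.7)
\emph{Proof proposal.} The plan is to deduce the $p$-part of the rank-one leading-term formula from the Main Theorem by combining a control theorem for the signed Selmer groups with a $p$-adic Gross--Zagier formula in the supersingular setting. First, under the hypothesis $\ord_{s=1}L(E,s)=1$, the theorems of Gross--Zagier and Kolyvagin yield $\rank E(\Q)=1$ and $\#\Sha(E/\Q)<\infty$; fix $P$ generating the free part of $E(\Q)$. The archimedean functional equation and its $p$-adic counterpart force $L^{\sharp}$ and $L^{\flat}$ to vanish at $T=0$, so the Main Theorem must be exploited through the \emph{leading} Taylor coefficients of $L^{\sharp/\flat}$ at $T=0$, not the values themselves.

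On the algebraic side, the Main Theorem identifies $(L^{\sharp/\flat})$ with the characteristic ideal of $X^{\sharp/\flat}:=\Hom(\Sel^{\sharp/\flat}(E/\Q_\infty),\Q_p/\Z_p)$. I would extend Kobayashi's control-theorem computation (from the $a_p=0$ case) to the general $p\mid a_p$ framework, identifying the kernel and cokernel of the restriction map $\Sel^{\sharp/\flat}(E/\Q_\infty)^{\Gamma}\to\Sel(E/\Q)$ in terms of the Tamagawa numbers, the torsion of $E(\Q)$, and $\Sha(E/\Q)$. Together with the structure theorem for finitely generated $\Lambda$-modules, this expresses the leading Taylor coefficient of a generator of the characteristic ideal of $X^{\sharp/\flat}$ at $T=0$ as a $p$-adic unit times
\begin{equation*}
\Reg_p(E/\Q)\cdot\#\Sha(E/\Q)\cdot\prod_l c_l\cdot(\#E(\Q)_{\tor})^{-2},
\end{equation*}
where $\Reg_p=\langle P,P\rangle_p$ is the cyclotomic $p$-adic regulator.

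On the analytic side, I would invoke a $p$-adic Gross--Zagier formula for $L^{\sharp/\flat}$, extending Kobayashi's identity from $a_p=0$ to general $p\mid a_p$. Such a formula expresses $(L^{\sharp/\flat})'(0)$ as a $p$-adic unit times $L'(E,1)\cdot\Reg_p/(\Omega\cdot\Reg(E/\Q))$. Equating this with the algebraic leading coefficient via the Main Theorem and taking $p$-adic absolute values, the factor $\Reg_p$ cancels between the two sides, yielding precisely the asserted identity
\begin{equation*}
\left|\frac{L'(E,1)}{\Reg(E/\Q)\Omega}\right|_p=\left|\frac{\#\Sha(E/\Q)\prod_l c_l}{\#E(\Q)^2_{\tor}}\right|_p.
\end{equation*}

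The main obstacle is the non-degeneracy of the cyclotomic $p$-adic height pairing on $E(\Q)$: if $\Reg_p=0$ the leading-term cancellation in Step~3 becomes vacuous, and one extracts no arithmetic content from the Main Theorem. A second substantial difficulty is the $p$-adic Gross--Zagier formula itself in the general supersingular regime, where the interaction of $L^{\sharp}$ and $L^{\flat}$ with the Bloch--Kato exponential is governed by the non-diagonal matrix of signed logarithms and is considerably more delicate than Kobayashi's symmetric $a_p=0$ computation.
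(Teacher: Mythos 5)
Your proposal is correct and is essentially the paper's own route: the paper's entire proof of this corollary is the observation that the Iwasawa main conjecture (now established) is the only missing ingredient in the discussion of \cite[Proof of Corollary 1.3]{kobayashigz}, which is precisely the control-theorem-plus-$p$-adic-Gross--Zagier argument you outline, including the cancellation of the $p$-adic regulator between the analytic and algebraic leading terms. The paper also records the alternative of citing the forthcoming work of Jetchev--Skinner--Wan \cite{jsw}, which sidesteps the $p$-adic height issues you flag.
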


Here, $\Reg(E/\Q)$ denotes the regulator of $E$, and $E(\Q)_\tor$ is the torsion part of $E(\Q)$. This corollary follows from our theorem combined with forthcoming work of Jetchev, Skinner, and Wan \cite{jsw}, or with the discussion of \cite[Corollary 1.3]{kobayashigz}.

Note that the Hasse--Weil bound $|a_p|\leq2\sqrt{p}$ (see e.g. \cite[Chapter 5]{silverman}) forces $p=2$ or $p=3$ when $p|a_p\neq0$, so that in this case $p=3$, since $p$ is odd. This still covers infinitely many elliptic curves, and we have worked out our methods so that they should generalize to general modular forms of weight two, at least when $\ord_p(a_p)\geq\frac{1}{2p}$: Put differently, the assumption $p=3$ is not used. About $11\%$ of elliptic curves with good reduction at $3$ have $a_3=\pm3$.

The global strategy of the proof is to establish an equivalence\begin{footnote}{The use of `equivalence' and `equivalent' in the introduction is slightly inaccurate. The equivalence is only given up to powers of $p$ and \textit{exceptional primes} (Definition \ref{exceptional}), but this defect is fixable for our purposes.}\end{footnote} of three sets of main conjectures over an auxiliary imaginary quadratic field $K$. The first set (four $\sharp/\flat$-$\sharp/\flat$ main conjectures) implies the $\sharp/\flat$ main conjectures over $\Q$ which we want to prove. We show that these $\sharp/\flat$-$\sharp/\flat$ main conjectures are equivalent to a second set of main conjectures formulated in terms of new integral Euler systems $\Delta_\sharp$ and $\Delta_\flat$ which we construct from the Euler systems $\Delta_\alpha$ and $\Delta_\beta$ of Kings--Loeffler--Zerbes. We show that this second set is itself equivalent to a third set, consisting of just one Greenberg-type main conjecture. The Greenberg-type main conjecture is amenable to the $GU(3,1)$-Eisenstein series method of Wan, who has proved one inclusion \cite{wanrankin}. We can then carry this inclusion over to the other two sets of main conjectures over $K$, which gives us a result towards half of the $\sharp/\flat$-main conjecture over $\Q$. Combining this with the known inclusion \cite[Theorem 7.16]{shuron} completes the proof. 

The above situation has an analogue in classical Iwasawa theory. The $\sharp/\flat$-$\sharp/\flat$ main conjectures are analogues of the classical main conjecture with the Kubota--Leopoldt $p$-adic $L$-function, while the main conjecture in terms of $\Delta_\sharp$ and $\Delta_\flat$ is an analogue to the formulation of the classical main conjecture in terms of cyclotomic units. The Greenberg-type main conjecture is (more than just) an analogue to Greenberg's formulation of the classical main conjecture found in \cite{greenberg}.

This strategy generalizes that of Wan for the case $a_p=0$. Initially, a generalization to the full supersingular case was assumed to be not hard, and it was hoped that some of the important arguments would be as similar to the ordinary case as for the case $a_p=0$ (e.g. manifested in the fact that the $\sharp$- and $\flat$-main conjectures can be proved \textit{separately}). However, many of the arguments break down when $a_p\neq0$, and the $\sharp/\flat$ main conjectures need to be handled \textit{together}. Another difficulty was that much of the background theory needed for \cite{wan} assumed $a_p=0$. We overcome this difficulty by generalizing the appropriate theories to general supersingular primes. 

More precisely, we choose a quadratic imaginary field in which $(p)=\gp\gq$ splits. The $\sharp/\flat$-$\sharp/\flat$ main conjectures are generalizations of the $\pm$-and-$\pm$ main conjectures of B.D. Kim (who assumed $a_p=0$). The essential ingredient in their formulation is a pair of $\sharp/\flat$-Coleman maps which give rise to a four-tuple of appropriate modified $\sharp/\flat$-$\sharp/\flat$ Selmer groups, whose Pontrjagin duals $\X^{**}$ (with each $*\in\{\sharp,\flat\})$ are torsion as $\Lambda_K=\Z_p[[X,Y]]$-modules. The main conjecture relates these $\X^{**}$ to Lei's four $\sharp/\flat$-$\sharp/\flat$ $p$-adic $L$-functions, which are elements\begin{footnote}{We prove this via a higher-dimensional version of the Weierstra\ss \text{ } preparation theorem. This may be a cute lemma (Lemma \ref{cutelemma}) for the interested reader.}\end{footnote} of $\Lambda_K$.    

Lei's four $p$-adic $L$-functions appear in a matrix in the left part of the three-part diagram below which features the analytic objects of the three equivalent main conjectures, below which are the formal statement of some of the main conjectures.
\usetikzlibrary{positioning}
\begin{tikzpicture}
\matrix [column sep=7mm, row sep=5mm] {\hline&&&&&&&
  \node (ab) {$(\Delta_\sharp,\Delta_\flat)$}; \\
 & &&&\\
 \\
\node (0) {$\smat{L^{\sharp\sharp} &L^{\flat\sharp} \\ L^{\sharp\flat} & L^{\flat\flat}}$}; &&&&\node (upperleft) {};&&&&&&&&&&&\node (2) {$(L_p^{\forall0})$}; \\
&&&&\node (left) {};&&&\node (1)  {$(\Delta_\alpha,\Delta_\beta)$};\\
};

\draw[<-|, thick] (0) to node[pos=0.7,above] {\hspace{-35mm}$\sharp/\flat$ Coleman maps at $\gq$}(ab);
\draw[|->, thick] (1) to node[right]{$\times\Log^{-1}(X)$} (ab);
\draw[<-|, thick] (2) to node[pos=0.6,above]{\hspace{30mm}$\sharp/\flat$ Wan maps at $\gp$} (ab);
\draw[ dotted] (-2.97,0) -- (-2.97,-2.25);
\draw[ dotted] (2.45,0) -- (2.45,-2.2);
\end{tikzpicture}

\begin{tabularx}{440pt}{p{4.55cm}|p{5cm}|p{4cm}}
The four $\sharp/\flat$-$\sharp/\flat$ main conjectures, & The two main conjectures in terms of $\Delta_\sharp$ and $\Delta_\flat$, e.g.&The Greenberg-type main conjecture:\\
e.g. $\Char(\X^{\sharp\sharp})=(L^{\sharp\sharp})$&$\Char(\X^{\hat{\sharp}\forall}/\Delta_\sharp)=\Char(\X^{\sharp0})$&$\Char(\X^{\forall0})=(L_p^{\forall0})$\\
\hline
\end{tabularx}

The Euler systems $\Delta_\alpha$ and $\Delta_\beta$ constructed in \cite{klz} live in a ring with bad growth properties in the variable $X$ corresponding to $\gp$. We fix this deficit by factoring out an explicit $2\times2$-matrix $\Log(X)$ responsible for the bad growth properties, and obtain an integral version $(\Delta_\sharp,\Delta_\flat)$. The $\sharp$- and $\flat$- Coleman maps $\col^{\sharp/\flat}$ send (the local image at $\gq$ of) $\Delta_\sharp$ and $\Delta_\flat$ to Lei's four $p$-adic $L$-functions, up to some controllable constant. Similarly, we construct $\sharp$- and $\flat$- Wan maps $\Wan^{\sharp/\flat}$ which map each of (the local images at $\gp$ of) $\Delta_\sharp$ and $\Delta_\flat$ to a $p$-adic $L$-function $L_p^{\forall0}$, up to some controllable elements of $\Lambda_K$.  The $p$-adic $L$-function $L_p^{\forall0}$ appears in the formulation of the Iwasawa main conjecture \`{a} la Greenberg \cite{greenberg}, purely in terms of the representations corresponding to $E$ and a Hida family of normalized CM forms coming from characters of $\Gal(K_\infty/K)$. The maps $\Wan^{\sharp/\flat}$ at $\gp$ can be thought of as dual to $\col^{\sharp/\flat}$: they describe\begin{footnote}{$\Wan^\flat$ restricts to a map $LOG^+$ constructed by Wan \cite{wan} in the $a_p=0$ case. Our construction of the Wan maps are backwards from Wan's, starting with this description of $Ker\col^{\sharp/\flat}$ rather than having it as a consequence.}\end{footnote} the kernel of $\col^{\sharp/\flat}$ at $\gp$. 

The essential properties of the algebraic objects are the local conditions. Roughly speaking, $\X^{\hat{\sharp}\forall}$ (together with $\Delta_\sharp$) and $\X^{\sharp0}$ are rich enough to understand both $\X^{\sharp\sharp}$ and $\X^{\forall0}$.

\begin{tabularx}{323.55pt}{c|c|c|c|c|}
$\text{(Dual of) Selmer group}$ & $\X^{\sharp\sharp}$ & $\X^{\sharp0}$& $\X^{\hat{\sharp}\forall}$&$\X^{\forall0}$\\
\hline
local condition at $\gp$ &$\sharp$&$\sharp$&dual of ${\sharp}$& (everything) \\
\hline
local condition at $\gq$ &$\sharp$&0&(everything)&0\\
\end{tabularx}

To connect the ($\Delta_\sharp,\Delta_\flat$)-main conjectures with a main conjecture of the form $\Char{\X}=(L_p)$, we use methods similar to \cite[Section 6]{kurihara} and consider an exact sequence of $\Lambda_K$-modules  
$$0\rightarrow\X^{\hat{\sharp}\forall}/\Delta_\sharp\rightarrow\Lambda_K/(\text{some }L_p)\rightarrow\text{(some $\X$)} \rightarrow \X^{\sharp0}\rightarrow 0,$$

which shows $\Char(\X^{\hat{\sharp}\forall}/\Delta_\sharp)=\Char(\X^{\sharp0})$ if and only if $\Char(\Lambda_K/(L_p))=\Char(\X).$

\begin{tabularx}{440pt}{|c|X|}

\multicolumn{2}{l}{\it Notation introduced in section 2.1}\\
\hline
$\Q_\infty$ & the cyclotomic $\Z_p$-extension of $\Q$\\
$\Phi_{p^n}(Y)$ & the $p^n$th cyclotomic polynomial $\sum_{i\geq 0}^{p-1}Y^{p^{n-1}i}$ \\
$\zeta_{p^n}$ & a primitive $p^n$th root of unity \\
$\gamma$ & a topological generator of $\Z_p$, or its image in $\Z_p/p^n\Z_p$  \\
$\alpha$ and $\beta$ & the roots of the Hecke polynomial $Y^2-a_pY+p$ \\
$\Omega_E$ & the N\'{e}ron period of $E$  \\
$T$ & the Tate-module for $E$  \\
$V$ & $\Q_p\tensor T$\\
$\Q_{p,n}$ & the $n$th layer in the cyclotomic $\Z_p$-extension of $\Q_p$\\
\hline
\end{tabularx}\\

\begin{tabularx}{440pt}{|c|X|c|}
\multicolumn{2}{l}{\it Notation introduced in section 2.2}\\
\hline

$K$ (and $\gp,\gq$) & a quadratic imaginary field of class number coprime to $p$ in which $p$ splits as $\gp\gq$ \\
$K_\infty$&the $\Z_p^2$-extension of $K$\\
$\Omega_E^+$ and $\Omega_E^-$&the real and imaginary periods of ${E}$ (see e.g. \cite[Section 9.2]{skinnerzhang})\\
$\Lambda_K$&$\Z_p[[\Gal(K_\infty/K)]]\cong\Z_p[[X,Y]].$ We identify $1+X$ with a generator of the Galois group of the $\gp^\infty$ ray class field $\Gal(K(\gp^\infty)/K)$ (and do the same for $1+Y$ and $\Gal(K(\gq^\infty)/K)$).\\
$k,O_k$ & an unramified extension of $\Q_p$ and its ring of integers \\
$k^m$ & the degree $p^m$ unramified extension of $\Q_p$\\
$k_n,\gm_n$ & the $\Z/p^n\Z$-subextension of $k(\zeta_{p^{n+1}})$, the maximal ideal in its integer ring.\\
$k_{n,m},\gm_{n,m}$ & the objects above with $k=k^m$.\\
$\Lambda_{n,m}$ & $\Z_p[\Gal(k_{n,m}/\Q_p)]$\\
$\Lambda$ & $\Z_p[[X]]\cong\Z_p[[\Gal(\Q_\infty/\Q)]]$\\
\hline
\end{tabularx}\\

\begin{tabularx}{440pt}{|c|X|c|}
\multicolumn{2}{l}{\it Notation introduced in section 2.3}\\
\hline
$U_\gp$ & the group $\Gal(K_{\infty,\gp}/K_{cyc,\gp})\cong \Z_p$; $K_{cyc}$ is the {cyclotomic $\Z_p$-extension of $K$}\\
$U$ & the unramified variable, i.e. $1+U$ is identified with a topological generator of $U_\gp$\\
$\Lambda_K^*$ & the Pontryagin dual of $\Lambda_K$\\
$\Psi$&the character $\Gal(\overline{K}/K)\rightarrow \Gal(K_\infty/K)\rightarrow\Lambda_K^\times$\\
$\Lambda_K^*(\Psi)$ & $\Lambda_K^*$ twisted by $\Psi$\\
\hline
\end{tabularx}\\

\begin{tabularx}{440pt}{|c|X|c|}

\multicolumn{2}{l}{\it Notation introduced in section 2.4}\\
\hline
$\gamma_1,\gamma_2,\cdots, \gamma_{p^t}$&Representatives of $\Gal(K_\infty/K)/D_\gp$, where $D_\gp$ is the decomposition group of $\gp$ in $\Gal(K_\infty/K)$.\\
$\pi_\alpha$ and $\pi_\beta$ & The projection maps from $\DD_{cris}(V_f)$ to its $\alpha$-(resp.$\beta$-) eigenspace for the Frobenius action with respect to the basis given by the N\'{e}ron differential $\omega_E$. \\
$\pi_{\calF_\dg^+}$&Projection onto $\calF_\dg^+$. ($\calF_\dg^\pm$ are defined in Section 2.4.) $\dg$ is the Hida algebra of CM forms of $\Gal(K_\infty/K)$ with coefficient ring $\Z_p[[T]]$, where $1+T$ i identified with a topological generator of $\Gal(K_\infty/K_{cyc})$\\
$\Gamma_{n,\gq}$&the Galois group of the totally ramified $\Z/p^n\Z$-extension of $K_\gq$\\
\hline
\end{tabularx}

\section{Main Conjectures}
\label{sec:mainconjectures}
Iwasawa main conjectures relate analytic objects to algebraic objects. The goal of this section is to state four such main conjectures. The first one (subsection \ref{subsec:sharpflat}) is the main conjecture concerning the cyclotomic $\Z_p$-extension of $\Q$, which we want to prove. The other three main conjectures concern $\Z_p$-extensions of imaginary quadratic fields. They are the $\sharp/\flat$-$\sharp/\flat$ main conjecture (subsection \ref{subsec:twovariable}), the Greenberg-type main conjecture (subsection \ref{subsec:greenberg}), and the main conjecture involving $\sharp/\flat$-Beilinson-Flach elements (subsection \ref{subsec:beilinsonflach}). In section \ref{sec:connecting}, we will prove that the $\sharp/\flat$-$\sharp/\flat$ main conjecture and the Greenberg-type main conjecture are equivalent to the $\sharp/\flat$-Beilinson-Flach element main conjecture. Since one inclusion of the Greenberg-type main conjecture is known, this then yields half the $\sharp/\flat$-$\sharp/\flat$ main conjecture, from which we can deduce the main theorem.

We first recall in subsection \ref{subsec:sharpflat} the statement of the main conjecture from \cite{shuron}. In subsection \ref{subsec:twovariable}, we recall the analytic theory of Lei and Loeffler, and the algebraic theory due to Kim in the case $a_p=0$, before developing it further to include the case $a_p\neq0$. Subsection \ref{subsec:greenberg} is a short exposition on the Greenberg-type main conjecture, and one inclusion of it, Wan's theorem. In subsection \ref{subsec:beilinsonflach}, we construct $\sharp/\flat$-Beilinson-Flach elements. These are modified versions of the Beilinson-Flach Euler system due to Kings, Loeffler, and Zerbes \cite{klz}. While the appropriate modification in the case $a_p=0$ is due to Wan, a new idea is needed for the case $a_p\neq0$. Once they are defined, we are in good shape to formulate the $\sharp/\flat$ Beilinson-Flach element main conjecture.
\subsection{Statement of the Cyclotomic $\sharp/\flat$ Main Conjecture}\label{subsec:sharpflat} 
We now recall the main conjecture of \cite{shuron} (cf. \cite{kobayashi} for the case $a_p=0$).

\textbf{The analytic side.}
Let $\chi$ be a character of $(\Z/p^{n+1}\Z)^\times$ into $\mu_{p^\infty}$ sending $\gamma$ to $\zeta_{p^n}$, and let $\tau(\chi)$ be the Gau\ss{ }sum $\sum_{a\in(\Z/p^{n+1}\Z)^\times}\chi(a)\zeta_{p^{n+1}}^a$.

\begin{theorem}(Amice and V\'{e}lu, Vi\v{s}ik)
There are $p$-adic power series $L_\alpha(X)$ and $L_\beta(X)$ converging on the open unit disk so that for $\xi\in\{\alpha,\beta\}$,
$$L_\xi(\zeta_{p^n}-1)={p^{n+1} \over \xi^{n+1}}\tau\left(\chi^{-1}\right){L\left(E,{\chi^{-1}},1\right)\over \Omega_E} \text {  if $0 \neq \zeta_{p^n}-1$, and } L_\xi(0)=\left(1-\frac{1}{\xi}\right)^2\frac{L(E,1)}{\Omega_E}.$$
\end{theorem}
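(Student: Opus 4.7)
The plan is the classical construction of Amice--V\'elu and Vi\v{s}ik via modular symbols; the theorem is being recorded from the literature, and the sketch below follows Mazur--Tate--Teitelbaum. The construction produces $L_\alpha$ and $L_\beta$ simultaneously by $\xi$-stabilizing the modular symbol of $E$ at each root of the Hecke polynomial.

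\textbf{Modular symbols and $\xi$-stabilization.} By modularity, attach to $E$ a newform $f \in S_2(\Gamma_0(N))$ with $a_p(f) = a_p$, and the associated integral modular symbol $\phi_E\{r \to s\}$, normalized by $\Omega_E$ so that for every primitive Dirichlet character $\chi$ of conductor $p^{n+1}$ one has
\[
\sum_{a \in (\Z/p^{n+1}\Z)^\times} \chi^{-1}(a)\, \phi_E\{\infty \to a/p^{n+1}\} \;=\; \tau(\chi^{-1})\, \frac{L(E,\chi^{-1},1)}{\Omega_E}.
\]
For each root $\xi$ of $Y^2 - a_pY + p$, form the $U_p$-eigenform $f_\xi := f - (p/\xi)\,V_pf$ with $f_\xi \mid U_p = \xi f_\xi$, and define a distribution $\mu_\xi$ on $\Z_p^\times$ on cylinder sets by
\[
\mu_\xi(a + p^{n+1}\Z_p) \;:=\; \xi^{-n-1}\phi_E\{\infty \to a/p^{n+1}\} \;-\; p\,\xi^{-n-2}\phi_E\{\infty \to a/p^n\}.
\]
The factor $p/\xi$ in $f_\xi$ is designed precisely so that $\mu_\xi$ satisfies the distribution compatibility $\sum_{b \bmod p}\mu_\xi(a + b p^{n+1} + p^{n+2}\Z_p) = \mu_\xi(a + p^{n+1}\Z_p)$.

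\textbf{Admissibility and Amice transform.} The analytic heart of the proof is to verify that $\mu_\xi$ is $h$-admissible in the sense of Amice--V\'elu, with $h = v_p(\xi)$. The $p$-adic integrality of the modular symbols $\phi_E\{\infty \to a/p^{n+1}\}$ (Manin--Mazur) together with the explicit formula for $\mu_\xi$ yields the required growth bound $|\int_{a+p^{n+1}\Z_p}P\,d\mu_\xi|_p \ll p^{n(\deg P - h)}$ for polynomials of degree less than $h$. The supersingularity assumption enters precisely here: since $p \mid a_p$, the Hasse--Weil bound forces $v_p(\xi) \leq 1/2 < 1$, so $h < 1$. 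Applying Amice's transform, $\mu_\xi$ then corresponds to a power series $L_\xi(X) \in \Q_p[[X]]$ converging on the open unit disk $|X|_p < 1$, uniquely characterized by admissibility together with its $\chi$-moments.

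\textbf{Interpolation and main obstacle.} The interpolation at $\zeta_{p^n}-1$ is read off from the defining property of $\mu_\xi$: integrating the character $\chi\colon \gamma \mapsto \zeta_{p^n}$ against $\mu_\xi$ recovers the right-hand side of the theorem after Gauss-sum manipulation, with the prefactor $p^{n+1}/\xi^{n+1}$ arising from the normalization $\xi^{-n-1}$ in $\mu_\xi$ combined with the conductor $p^{n+1}$. The special value $L_\xi(0)$ corresponds to integrating the trivial character, and both terms of the stabilization $f_\xi = f - (p/\xi)V_pf$ each contribute a factor of $1 - 1/\xi$, producing the Euler factor $(1-1/\xi)^2$. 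The main obstacle is Step~3, the admissibility bound: one must carefully track the $\xi^{-n-1}$ denominators against the $p$-adic size of the modular symbols to obtain genuine $h$-admissibility with $h < 1$, and it is precisely this that breaks down in the ordinary case (where $v_p(\xi)$ can be $0$ for one root but $1$ for the other) and that the supersingular bound $v_p(\xi) \leq 1/2$ salvages.
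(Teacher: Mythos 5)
The paper does not prove this theorem; it is quoted from Amice--V\'elu and Vi\v{s}ik (via Mazur--Tate--Teitelbaum and Pollack), and your sketch is exactly the standard construction that those references carry out, so in approach there is nothing to compare. One concrete slip is worth flagging: your defining formula for the distribution,
$$\mu_\xi(a+p^{n+1}\Z_p)=\xi^{-n-1}\phi_E\{\infty\to a/p^{n+1}\}-p\,\xi^{-n-2}\phi_E\{\infty\to a/p^{n}\},$$
carries an extraneous factor of $p$ in the second term. For weight $2$ the correct coefficient is $\xi^{-n-2}$ (the general-weight factor $p^{k-2}$ equals $1$ here): writing the compatibility $\sum_{b\bmod p}\mu_\xi(a+bp^{n+1}+p^{n+2}\Z_p)=\mu_\xi(a+p^{n+1}\Z_p)$ and using $\sum_b\phi_E\{\infty\to (r+b)/p\}=a_p\phi_E\{\infty\to r\}-\phi_E\{\infty\to pr\}$ together with $\xi^2=a_p\xi-p$, the two sides match only with coefficient $1$, so with your normalization the distribution property you assert fails. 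Two smaller points: the bound $v_p(\xi)=1/2$ comes from $v_p(\alpha)+v_p(\beta)=v_p(p)=1$ combined with $v_p(\alpha+\beta)=v_p(a_p)\geq 1$, not from the Hasse--Weil bound per se; and the admissibility estimate should read that $|\mu_\xi(a+p^{n}\Z_p)|_p$ grows like $p^{nh}$ with $h=v_p(\xi)<1$ (the exponent in your displayed inequality has the wrong sign with the normalization $|p|_p=1/p$). None of this affects the architecture of the argument, which is the right one.
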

These power series are determined uniquely under the condition that they are $O(\log_p(1+X)^\frac{1}{2})$, cf.  \cite[p. 528]{pollack}. (For the definition of growth, see Definition \ref{growth}.)
 We let
$$C_n(X):=\smat{a_p & 1 \\ -\Phi_{p^n}(1+X) & 0},$$
and put $$\Log(X):=\lim_{n\to\infty} C_1(X)C_2(X)\cdots C_n(X)\smat{a_p & 1 \\ -p & 0}^{-(n+2)}\smat{-1 & -1 \\ \beta & \alpha}.$$

\begin{theorem}(\cite{pollack} when $a_p=0$, \cite{surprisingsha} for general $p\mid a_p$)
There is a vector of $p$-adic $L$-functions $(L^\sharp(X),L^\flat(X))\in\Z_p[[X]]^{\directsum 2}$ so that $(L_\alpha(X),L_\beta(X)):=(L^\sharp(X),L^\flat(X))\Log(X).$ Further, we have $(L^\sharp(X),L^\flat(X))\neq(0,0).$
\end{theorem}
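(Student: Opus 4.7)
The approach is to invert $\Log(X)$ in the field of meromorphic functions on the open unit disk, define $(L^\sharp,L^\flat) := (L_\alpha,L_\beta)\cdot\Log(X)^{-1}$, and verify in turn that the potential poles at $\zeta_{p^n}-1$ are removable, that the resulting vector is bounded, and that it is $\Z_p$-integral; non-vanishing then falls out at the end.

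\textbf{Structure of $\Log(X)$.} Diagonalize $\smat{a_p & 1 \\ -p & 0} = PDP^{-1}$ with $P=\smat{-1 & -1 \\ \beta & \alpha}$ and $D=\smat{\alpha & 0 \\ 0 & \beta}$, so the defining limit becomes $\Log(X) = \lim_n C_1(X)\cdots C_n(X)\cdot P\cdot D^{-(n+2)}$. Using $\prod_{i=1}^n\Phi_{p^i}(1+X) = ((1+X)^{p^n}-1)/X$ and the classical identity $\lim_n ((1+X)^{p^n}-1)/p^n = \log_p(1+X)$, one reads off
\[
\det\Log(X) = \frac{\beta-\alpha}{p^2}\cdot\frac{\log_p(1+X)}{X},
\]
whose zeros on the open unit disk are exactly $\zeta_{p^n}-1$ for $n\geq 1$, each simple. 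Convergence of the limit, and the growth bound $O(\log_p(1+X)^{1/2})$ on each entry of $\Log(X)$, then follow by Newton polygon bookkeeping, using the supersingular slope $v_p(\alpha)=v_p(\beta)=1/2$ to balance the growth of $C_1\cdots C_n$ against the decay of $D^{-(n+2)}$ (in the spirit of Pollack's estimates when $a_p=0$).

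\textbf{Pole removal.} At $X=\zeta_{p^n}-1$ one has $C_n(\zeta_{p^n}-1) = \smat{a_p & 1 \\ 0 & 0}$ and $C_m(\zeta_{p^n}-1) = \smat{a_p & 1 \\ -p & 0}$ for $m > n$ (since $\Phi_{p^m}(\zeta_{p^n})=p$), so a direct telescoping gives
\[
\Log(\zeta_{p^n}-1) = C_1(\zeta_{p^n}-1)\cdots C_{n-1}(\zeta_{p^n}-1)\cdot\smat{-\alpha^{-(n+1)} & -\beta^{-(n+1)} \\ 0 & 0}.
\]
Thus every row of $\Log(\zeta_{p^n}-1)$ is a scalar multiple of $(\alpha^{-(n+1)},\beta^{-(n+1)})$, equivalently, after multiplying by $(\alpha\beta)^{n+1} = p^{n+1}$, of $(\beta^{n+1},\alpha^{n+1})$. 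The interpolation formula gives $(L_\alpha,L_\beta)(\zeta_{p^n}-1) = \tau(\chi^{-1})L(E,\chi^{-1},1)/\Omega_E\cdot(\beta^{n+1},\alpha^{n+1})$, which is parallel to that row span. Hence $(L_\alpha,L_\beta)(\zeta_{p^n}-1)\cdot\mathrm{adj}(\Log(\zeta_{p^n}-1)) = 0$, the pole is removable, and comparing the $O(\log_p)$ growth of the numerator $(L_\alpha,L_\beta)\cdot\mathrm{adj}(\Log(X))$ with the $O(\log_p)$ growth of $\det\Log(X)$ leaves $(L^\sharp,L^\flat)$ bounded on the disk, hence in $\Q_p\otimes\Z_p[[X]]^{\oplus 2}$.

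\textbf{Integrality and non-vanishing.} The refinement $L^\sharp,L^\flat\in\Z_p[[X]]$ is the main obstacle, because $L_\alpha,L_\beta$ a priori live only in $\Z_p[\alpha][[X]]$. I would obtain it by tracking the inversion at each cyclotomic point: the factors $\alpha^{-(n+1)},\beta^{-(n+1)}$ appearing in $\Log(\zeta_{p^n}-1)$, after careful removal of the singularity, cancel against the factors $\beta^{n+1},\alpha^{n+1}$ from the interpolation, placing $(L^\sharp,L^\flat)(\zeta_{p^n}-1)$ in $\Z_p[\zeta_{p^n}]^{\oplus 2}$ for every $n$. Combined with the boundedness already shown, the standard Iwasawa-algebraic criterion --- a bounded power series in $\Q_p[[X]]$ with values in $\Z_p[\zeta_{p^n}]$ at every $\zeta_{p^n}-1$ belongs to $\Z_p[[X]]$ --- then yields $L^\sharp,L^\flat\in\Z_p[[X]]$. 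Finally, non-vanishing is automatic: if $(L^\sharp,L^\flat)=(0,0)$, then $(L_\alpha,L_\beta)=(0,0)$, contradicting Rohrlich's theorem that $L(E,\chi,1)\neq 0$ for infinitely many Dirichlet characters $\chi$ of $p$-power conductor.
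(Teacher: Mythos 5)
Your first three steps are essentially correct and reproduce the standard strategy: the diagonalization $\smat{a_p & 1 \\ -p & 0}=PDP^{-1}$, the determinant formula $\det\Log(X)=\frac{\beta-\alpha}{p^2}\log_p(1+X)/X$, the rank-one degeneration $\Log(\zeta_{p^n}-1)=C_1(\zeta_{p^n}-1)\cdots C_{n-1}(\zeta_{p^n}-1)\smat{-\alpha^{-(n+1)} & -\beta^{-(n+1)} \\ 0 & 0}$, and the observation that the interpolation vector is proportional to $(\beta^{n+1},\alpha^{n+1})$ all check out, and together with the growth estimates (which are exactly \cite[Prop.\ 4.20]{surprisingsha}) they yield $(L^\sharp,L^\flat)$ bounded, hence in $(\Q_p(\alpha)\otimes\Z_p[[X]])^{\oplus 2}$; a small missing remark is that descending the coefficients to $\Q_p$ needs the Galois symmetry $\alpha\leftrightarrow\beta$, since $\Q_p(\alpha)/\Q_p$ is ramified here. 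The non-vanishing via Rohrlich is fine. Note that the paper itself gives no proof of this statement --- it is quoted from \cite{pollack} and \cite{surprisingsha} --- so the comparison is with the cited proofs.

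The genuine gap is the integrality step, and it is not a technicality. You propose to show $(L^\sharp,L^\flat)(\zeta_{p^n}-1)\in\Z_p[\zeta_{p^n}]^{\oplus 2}$ by ``tracking the inversion at each cyclotomic point,'' but precisely because $\Log(\zeta_{p^n}-1)$ has rank one, the relation $(L_\alpha,L_\beta)(\zeta_{p^n}-1)=(L^\sharp,L^\flat)(\zeta_{p^n}-1)\Log(\zeta_{p^n}-1)$ determines only \emph{one} linear combination of the two values: writing $(u,v)=(L^\sharp,L^\flat)(\zeta_{p^n}-1)\,C_1(\zeta_{p^n}-1)\cdots C_{n-1}(\zeta_{p^n}-1)$, the interpolation formula pins down $u$ and says nothing whatsoever about $v$. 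The individual values $L^\sharp(\zeta_{p^n}-1)$ and $L^\flat(\zeta_{p^n}-1)$ depend on first-order (derivative) data of $L_\alpha,L_\beta$ along the removable singularity, which the interpolation property does not supply; so the hypothesis of your ``standard criterion'' cannot be verified this way. The cited proofs avoid this entirely by working at finite level: the integral Mazur--Tate elements $\vartheta_n\in\Z_p[\Gal(\Q_n/\Q)]$ (integral by Manin's theorem) and the three-term norm relation produce the pair $(L^\sharp_n,L^\flat_n)$ inside the integral group rings directly, and $(L^\sharp,L^\flat)$ is their limit, so integrality is built in rather than recovered after an analytic division. If you insist on your construction, the viable repair is a $\mu$-invariant argument (as in the paper's proof of Proposition 2.8 via Lemma 2.7): apply Weierstrass preparation to the bounded $(L^\sharp,L^\flat)$ and rule out $\mu<0$ by comparing $v_p(L_\xi(\zeta_{p^n}-1))$, bounded below via Manin's integrality and the factor $p^{n+1}/\xi^{n+1}$, against the valuations of the entries of $\Log$ at $\zeta_{p^n}-1$ for large $n$ --- but that comparison must still contend with the rank-one degeneration and is exactly the delicate point your sketch elides.
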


Alternatively, the pair of Iwasawa functions is the image of Kato's zeta element under a pair of Coleman maps, i.e. $(L^\sharp(X),L^\flat(X))=(\col_p^\sharp(\bz),\col_p^\flat(\bz))$, where $\bz=(z_n^+)_n\in\bH^1_\Iw(T):=\varprojlim H^1(\Q_{p,n},T)$ is Kato's zeta element \cite[Theorem 12.5]{kato}. The Coleman maps $\col_p^\sharp$ and $\col_p^\flat$ are maps from $\bH^1_\Iw(T)$ to $\Z_p[[X]]$ constructed in \cite[Section 5]{shuron} and will be defined more generally in subsection \ref{subsec:twovariable}, with respect to any if the primes $\gp$ or $\gq$ above $p$.

\textbf{The algebraic side.}
The algebraic object is a modified Selmer group. Let $\star\in\{\sharp,\flat\}$. We put 
$$\Sel^\star(E/\Q_\infty):=\ker\left(\Sel(E/\Q_\infty)\to \frac{E(\Q_{p,\infty})\tensor \Q_p/\Z_p}{E^\star}\right),$$
where $E^\star$ is the exact annihilator of $\ker\col_p^\star$ under the local Tate pairing
$$ \varprojlim_n H^1(\Q_{p,n},T)\times \varinjlim_n H^1(\Q_{p,n},V/T)\to \Q_p/\Z_p.$$

We let $\X^\star(E/\Q_\infty):=\Hom(\Sel^\star(E/\Q_\infty),\Q_p/\Z_p)$, i.e. the Pontryagin dual of $\Sel^\star(E/\Q_\infty)$.

\textbf{The main conjecture.} Putting these together, the Iwasawa main conjecture then states:  
\begin{conjecture}(\cite{kobayashi}, \cite[Main Conjecture 7.21]{shuron})
Choose $\star\in\{\sharp,\flat\}$ so that $L^\star(X)\neq0$. Then 
$$\Char(\X^\star(E/\Q_\infty))=(L^\star(X)).$$
\end{conjecture}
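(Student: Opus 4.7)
The plan is to prove the main conjecture by bootstrapping from a main conjecture over a suitable auxiliary imaginary quadratic field $K$. Since the inclusion $\Char(\X^\star(E/\Q_\infty)) \supseteq (L^\star(X))$ is already known from Kato's Euler system (\cite[Theorem 7.16]{shuron}), the task is to establish the reverse divisibility $\Char(\X^\star(E/\Q_\infty)) \mid L^\star(X)$.

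First, I would fix an imaginary quadratic field $K$ of class number prime to $p$ in which $p$ splits as $\gp\gq$, and work over the $\Z_p^2$-extension $K_\infty$. I would formulate four two-variable $\sharp/\flat$-$\sharp/\flat$ main conjectures generalizing Kim's $\pm$-$\pm$ main conjectures, relating Lei's four $p$-adic $L$-functions $L^{\sharp\sharp}, L^{\sharp\flat}, L^{\flat\sharp}, L^{\flat\flat} \in \Lambda_K$ to the characteristic ideals of the Pontryagin duals $\X^{**}$ of appropriately-modified two-variable Selmer groups. A control-theory argument would show that the cyclotomic $\sharp/\flat$ main conjecture over $\Q$ follows from the $\sharp/\flat$-$\sharp/\flat$ main conjectures over $K$ after specialization along the cyclotomic direction.

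The heart of the argument is establishing an equivalence of three sets of main conjectures over $K$. I would construct integral $\sharp/\flat$-Beilinson--Flach elements $\Delta_\sharp, \Delta_\flat$ by factoring out a $2\times 2$ matrix $\Log(X)$ encoding the bad growth of the Kings--Loeffler--Zerbes elements $\Delta_\alpha, \Delta_\beta$ in the $\gp$-direction. Applying the $\sharp/\flat$-Coleman maps at $\gq$ to the local image of $(\Delta_\sharp, \Delta_\flat)$ then recovers Lei's $L^{**}$ up to a controllable constant, translating the $\sharp/\flat$-$\sharp/\flat$ main conjectures into a main conjecture phrased in terms of $(\Delta_\sharp, \Delta_\flat)$. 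Next, I would construct dual $\sharp/\flat$-Wan maps at $\gp$ describing the kernel of the $\sharp/\flat$-Coleman maps, such that applying them to the local image at $\gp$ of each of $\Delta_\sharp, \Delta_\flat$ yields the Greenberg-type $p$-adic $L$-function $L_p^{\forall 0}$. Using a four-term exact sequence of $\Lambda_K$-modules
\[ 0 \to \X^{\hat{\sharp}\forall}/\Delta_\sharp \to \Lambda_K/(L_p^{\forall 0}) \to \X \to \X^{\sharp 0} \to 0, \]
analogous to Kurihara's \cite[Section 6]{kurihara}, I would show that the $(\Delta_\sharp, \Delta_\flat)$-main conjectures are equivalent to the Greenberg-type main conjecture $\Char(\X^{\forall 0}) = (L_p^{\forall 0})$.

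Finally, I would invoke Wan's \cite{wanrankin} inclusion $\Char(\X^{\forall 0}) \mid L_p^{\forall 0}$, obtained by his $GU(3,1)$ Eisenstein-series method, transport this through the equivalences to get the corresponding inclusion in the $\sharp/\flat$-$\sharp/\flat$ main conjectures over $K$, specialize cyclotomically to obtain $\Char(\X^\star(E/\Q_\infty)) \mid L^\star(X)$ over $\Q$, and combine with Kato's opposite inclusion to conclude. The main obstacle is the case $a_p \neq 0$, where many standard techniques break down: one cannot decouple the $\sharp$ and $\flat$ conjectures as in the $a_p = 0$ case but must treat them in tandem, and the algebraic/analytic background theory required for Wan's argument must be generalized beyond the $a_p = 0$ setting. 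This is also why I expect the cleanest bookkeeping tool to be a two-by-two matrix formalism throughout, with the factorization $(L_\alpha, L_\beta) = (L^\sharp, L^\flat)\Log(X)$ and its Beilinson--Flach analogue serving as the linchpin connecting the three sets of conjectures.
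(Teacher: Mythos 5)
Your proposal follows essentially the same route as the paper: an auxiliary imaginary quadratic $K$ split at $p$, the three equivalent sets of main conjectures over $K$ (the $\sharp/\flat$-$\sharp/\flat$ ones, the integral Beilinson--Flach ones obtained by factoring out $\Log(X)$, and the Greenberg-type one) linked by the Coleman and Wan maps and four-term Poitou--Tate sequences, Wan's $GU(3,1)$ inclusion transported back, cyclotomic specialization, and Kato's opposite divisibility. The only points you elide are that the cyclotomic specialization actually yields the product divisibility $\Char(\X^\bullet)\Char(\X^\bullet_K)\subseteq(L^\bullet L^{(K)\bullet})$ involving the quadratic twist $E^{(K)}$, so Kato's inclusion must be invoked for the twist as well, and that the equivalences hold only up to powers of $p$ and strong primes, which the paper repairs via a $\mu$-invariant vanishing argument and conditions on the anticyclotomic root number and on the splitting behaviour of the primes dividing the conductor in $K$.
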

This is the conjecture we want to prove. Half of this conjecture follows from work of Kato:
\begin{theorem}(\cite[Theorem 7.16]{shuron}) Choose $\star\in\{\sharp,\flat\}$ so that $L^\star(X)\neq0$. Then for some integer $n$, we have
$$\Char(\X^\star(E/\Q_\infty))\supseteq(p^nL^\star(X)).$$ If the $p$-adic representation $\GalQ\to\GL_{\Z_p}T$ on the automorphism group of the Tate-module is surjective, we may take $n=0$.

\end{theorem}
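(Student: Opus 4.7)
The plan is to leverage Kato's zeta element $\bz = (z_n)_n \in \bH^1_\Iw(T)$, whose images under the Coleman maps are already identified as $(\col_p^\sharp(\bz), \col_p^\flat(\bz)) = (L^\sharp(X), L^\flat(X))$. The strategy is to invoke Kato's Euler system divisibility for a Kato-style (strict) Selmer group and then transfer the resulting bound to $\X^\star(E/\Q_\infty)$, exploiting the construction of $E^\star$ as the exact annihilator of $\ker\col_p^\star$ under local Tate duality.

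First, I would apply Kato's Euler system machinery (\cite[Thms 12.5, 13.4]{kato}) to the Euler system extending $\bz$. The standard Euler system argument produces a divisibility in $\Lambda = \Z_p[[X]]$ between the characteristic ideal of (the Pontryagin dual of) a Kato-style Selmer group and the $\Lambda$-ideal generated by the image of $\bz$ in local Iwasawa cohomology at $p$, valid up to a power of $p$ coming from the auxiliary primes used in the Euler system machinery.

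Next, I would transfer this global divisibility to one for $\X^\star$ via a Poitou--Tate exact sequence adapted to the modified local condition $E^\star$. By construction $E^\star = (\ker\col_p^\star)^\perp$, so $\col_p^\star$ identifies $\bH^1_\Iw(T)/\ker\col_p^\star$ with an ideal of $\Lambda$ and sends $\bz$ to $L^\star(X)$ under this identification. A diagram chase through Poitou--Tate, using that $\Sel(E/\Q_\infty)$ and $\Sel^\star(E/\Q_\infty)$ differ only in their local condition at $p$, yields
$$\Char_\Lambda(\X^\star(E/\Q_\infty)) \supseteq (p^n L^\star(X))$$
for some $n \geq 0$, where $p^n$ absorbs both the Euler-system ambiguity and any finite-order local contributions at bad primes $\ell \neq p$.

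For the refined statement with $n = 0$, surjectivity of $\GalQ \to \GL_{\Z_p}(T)$ forces absolute irreducibility of the residual representation $\bar{T}$, which makes Kato's bound tight, and it forces $H^0(\Q_\infty, E[p^\infty]) = 0$, eliminating the remaining $p$-torsion ambiguities. The main obstacle is the middle step: verifying that $\col_p^\star$ surjects onto $\Lambda$ (after factoring out the matrix $\Log(X)$ that governs the growth of the non-integral $L_\alpha, L_\beta$) and that $\ker\col_p^\star$ behaves correctly under local Tate duality, so that $E^\star$ produces a Selmer group whose dual has the expected characteristic ideal. In the $a_p = 0$ case these properties are transparent because the Coleman maps essentially diagonalize; for general supersingular $a_p$, one must work with the full matrix $\Log(X)$ and analyze the Wach module of $T$ at $p$, which is the technical core of \cite[Section 5]{shuron}.
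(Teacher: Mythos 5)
Your proposal is correct and follows essentially the same route as the cited proof (the paper itself only quotes this result from \cite[Theorem 7.16]{shuron}): Kato's Euler-system divisibility for the strict Selmer group, the identification $\col_p^\star(\bz)=L^\star(X)$, and a Poitou--Tate transfer to the $\star$-modified local condition $E^\star=(\ker\col_p^\star)^\perp$, with the surjectivity hypothesis on $\GalQ\to\GL_{\Z_p}T$ removing the $p$-power ambiguity exactly as in Kato's theorem. The technical input you flag --- the image of $\col_p^\star$ and the duality behaviour of $\ker\col_p^\star$ --- is precisely what \cite[Section 5]{shuron} supplies.
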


\subsection{The $\sharp/\flat$-$\sharp/\flat$ main conjectures for imaginary quadratic fields}
\label{subsec:twovariable}
In this subsection, we recall the analytic theory of Antonio Lei (which generalizes that of Haran/Loeffler (\cite[Theorem 2]{haran} and \cite[Corollary 2]{loeffler}) for the case $a_p=0$), and then generalize the algebraic theory of B.D. Kim (who also worked with the assumption $a_p=0$). We then put the two sides together via a main conjecture.

\textbf{The analytic side.}
Let $\xi,\eta\in\{\alpha,\beta\}$, and denote by $L_{\xi,\eta}(X,Y)$ the $p$-adic $L$-functions of Haran/Loeffler interpolating the following special values at a character $\chi$ of $\Gal(K_\infty/K)$, where the conductor $\gf_\chi$ is of the form $\gp^n\gq^{n'}$ for $n,n'\geq1$:
$$\left(\frac{1}{\xi}\right)^{\ord_\gp\gf_\chi} \left(\frac{1}{\eta}\right)^{\ord_\gq \gf_\chi} \frac{L(E,\chi,1)}{\tau(\chi)|\gf_\chi|\Omega_E^+\Omega_E^-}$$

\begin{theorem}[Lei, \cite{lei}]\label{lei} There exist $L^{\sharp\sharp},L^{\sharp\flat},L^{\flat\sharp},L^{\flat\flat}\in\Lambda_K\tensor\Q$ so that 
$$\links L_{\alpha,\alpha} & L_{\beta,\alpha} \\ L_{\alpha,\beta} & L_{\beta,\beta} \rechts= \Log(Y)^T\links L^{\sharp\sharp} & L^{\flat\sharp} \\ L^{\sharp\flat} & L^{\flat\flat} \rechts\Log(X).$$
\end{theorem}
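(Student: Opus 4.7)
The strategy is to apply the one-variable Pollack-style decomposition (the second theorem displayed above, which factors out $\Log(X)$ from $(L_\alpha(X), L_\beta(X))$) separately in each of the two variables. View the matrix $\smat{L_{\alpha,\alpha} & L_{\beta,\alpha}\\ L_{\alpha,\beta} & L_{\beta,\beta}}$ as having entries in $\Lambda_K \otimes \Q = (\Z_p[[Y]]\otimes\Q)[[X]] = (\Z_p[[X]]\otimes\Q)[[Y]]$. The interpolation formula given before Theorem \ref{lei}, combined with Amice--Vélu--Vi\v{s}ik type estimates, shows that each $L_{\xi,\eta}$ has $X$-growth $O(\log_p(1+X)^{1/2})$ (uniformly in $Y$) and $Y$-growth $O(\log_p(1+Y)^{1/2})$ (uniformly in $X$); these are precisely the growth regimes governed by Pollack's half-logarithmic decomposition.

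First I would fix the subscript $\eta \in \{\alpha,\beta\}$ and regard $(L_{\alpha,\eta},L_{\beta,\eta})$ as a pair of elements of $\Lambda_K \otimes \Q$ having the controlled $X$-growth above. Applying the one-variable theorem over the coefficient ring $\Z_p[[Y]]\otimes\Q$ — which is legitimate because Pollack's construction of $\Log(X)$ is intrinsic to the local situation at $p$ and is natural under flat coefficient extension — yields a factorization
\[
(L_{\alpha,\eta},L_{\beta,\eta}) = (M^\sharp_\eta, M^\flat_\eta)\,\Log(X),
\]
with $M^\sharp_\eta,M^\flat_\eta \in \Lambda_K \otimes \Q$. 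Running this for $\eta = \alpha$ and $\eta=\beta$ packages into a matrix identity $\smat{L_{\alpha,\alpha} & L_{\beta,\alpha}\\ L_{\alpha,\beta} & L_{\beta,\beta}} = M(X,Y)\,\Log(X)$ where $M$ is a $2\times2$ matrix in $\Lambda_K\otimes\Q$.

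Next, I would apply the same one-variable theorem in the $Y$ variable to each row of $M^T$. Concretely, for each $\star \in \{\sharp,\flat\}$, view $(M^\star_\alpha, M^\star_\beta)$ as a pair in $\Lambda_K\otimes\Q$; its $Y$-growth is $O(\log_p(1+Y)^{1/2})$ (inherited from the original $L_{\xi,\eta}$, since multiplying on the right by $\Log(X)$, an $X$-only matrix, cannot worsen the $Y$-growth of the factors). This yields $(M^\star_\alpha, M^\star_\beta) = (L^{\star\sharp},L^{\star\flat})\Log(Y)$ in $\Lambda_K\otimes\Q$. Reassembling and taking a transpose on the $Y$-factor gives the advertised identity
\[
\smat{L_{\alpha,\alpha} & L_{\beta,\alpha}\\ L_{\alpha,\beta} & L_{\beta,\beta}} = \Log(Y)^T \smat{L^{\sharp\sharp} & L^{\flat\sharp}\\ L^{\sharp\flat} & L^{\flat\flat}} \Log(X).
\]

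\textbf{Main obstacle.} The principal difficulty is verifying that the one-variable Pollack factorization behaves well under the coefficient extension from $\Z_p$ to $\Z_p[[Y]]\otimes\Q$, in particular that the half-logarithmic growth estimate — the essential input that cuts out a unique factor — is preserved when one computes uniformly in $Y$. The cleanest way is to check that the power-series coefficients of $L_{\xi,\eta}$ in $X$, which are elements of $\Z_p[[Y]]\otimes\Q$, decay according to the same $p$-adic rate as in the scalar case; this follows from the interpolation formula by controlling the $p$-adic valuations of the special values via Weierstra\ss{} preparation in the $Y$-variable. Once this is in hand, the inductive construction of $\Log(X)$ in \cite{shuron}/\cite{pollack} carries over verbatim, and uniqueness of the $L^{\star\star}$ follows from the uniqueness in the one-variable theorem applied twice.
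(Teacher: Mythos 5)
The paper offers no proof of this theorem; it is quoted from Lei's article \cite{lei}, and Lei's argument is in substance the variable-by-variable factorization you propose, so your overall route is the right one. But two points need repair before the argument closes.

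First, your intermediate claim is false as stated: after dividing only by $\Log(X)$, the factors $M^\sharp_\eta, M^\flat_\eta$ do \emph{not} lie in $\Lambda_K\tensor\Q$. They are bounded in $X$ but still have $O(\log_p(1+Y)^{1/2})$ growth in $Y$, i.e.\ they live in $D_K^{0,\frac{1}{2}}$ in the paper's notation; only the second division, by $\Log(Y)$, lands you in $\Lambda_K\tensor\Q$. You implicitly concede this one sentence later when you invoke their nontrivial $Y$-growth, so this is an internal inconsistency rather than a conceptual error, but the phrase ``with $M^\sharp_\eta,M^\flat_\eta\in\Lambda_K\tensor\Q$'' must be replaced by the correct growth space or your second step is vacuous.

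Second, and more substantively, ``applying the one-variable theorem over the coefficient ring $\Z_p[[Y]]\tensor\Q$'' is not a formal base change, because that theorem has two separate inputs which must each be re-verified with $Y$ as a parameter. (a) Divisibility: one needs $(L_{\alpha,\eta},L_{\beta,\eta})\Log(X)^{ad}$ to vanish at every $X=\zeta_{p^n}-1$, which comes not from growth but from the precise shape of the interpolation formula --- the values of $L_{\alpha,\eta}$ and $L_{\beta,\eta}$ at a character of conductor $\gp^n\gq^{n'}$ differ only by $\alpha^{-n}$ versus $\beta^{-n}$ times a common quantity. The displayed two-variable interpolation does have this product structure, but checking it at a dense enough set of characters is the actual content of the step and cannot be absorbed into ``the theorem is intrinsic to the local situation at $p$.'' (b) Boundedness: working over $\Z_p[[Y]]\tensor\Q$ a priori only places the quotient in $(\Z_p[[Y]]\tensor\Q)[[X]]$, which is strictly larger than $\Lambda_K\tensor\Q$ since the $p$-denominators of the coefficients may grow with the power of $X$. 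You correctly flag this as the main obstacle, but ``Weierstra\ss{} preparation in the $Y$-variable'' does not by itself control these denominators. The standard repair is to specialize $Y$ at $\zeta_{p^m}-1$ for all large $m$, apply the genuinely one-variable growth estimate on each such fibre, and recover uniformity from the density of these specializations --- this is also how the paper's Proposition \ref{integrality} handles the further integrality refinement.
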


\begin{proposition} The matrix $\links L^{\sharp\sharp} & L^{\flat\sharp} \\ L^{\sharp\flat} & L^{\flat\flat} \rechts$ is not zero.
\end{proposition}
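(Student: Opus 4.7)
The plan is to argue by contradiction. Suppose the matrix
$$M := \links L^{\sharp\sharp} & L^{\flat\sharp} \\ L^{\sharp\flat} & L^{\flat\flat} \rechts$$
is identically zero in $\Lambda_K\tensor\Q$. Substituting into the factorisation of Theorem \ref{lei} then forces each of the four two-variable $p$-adic $L$-functions $L_{\xi,\eta}$ (for $\xi,\eta\in\{\alpha,\beta\}$) to vanish identically. To obtain a contradiction, it suffices to exhibit even a single finite-order character $\chi$ of $\Gal(K_\infty/K)$, with conductor $\gf_\chi = \gp^n\gq^{n'}$ satisfying $n,n'\geq 1$, at which some $L_{\xi,\eta}(\chi)\neq 0$.

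By the interpolation property recorded just above Theorem \ref{lei}, the value $L_{\xi,\eta}(\chi)$ equals
$$\left(\tfrac{1}{\xi}\right)^{n}\left(\tfrac{1}{\eta}\right)^{n'} \frac{L(E,\chi,1)}{\tau(\chi)\,|\gf_\chi|\,\Omega_E^+\Omega_E^-}.$$
The roots $\xi,\eta$ are nonzero, the Gau\ss{} sum $\tau(\chi)$ is a nonzero algebraic number, and the periods $\Omega_E^\pm$ are nonzero; thus the non-vanishing of $L_{\xi,\eta}(\chi)$ reduces to the non-vanishing of the complex $L$-value $L(E,\chi,1)$.

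To produce such a $\chi$ I would invoke a Rohrlich-type non-vanishing theorem: for the base change of $E$ to $K$, there exist infinitely many finite-order Hecke characters $\chi$ of $\Gal(K_\infty/K)$ ramified at both $\gp$ and $\gq$ with $L(E,\chi,1)\neq 0$. Results of this type originate in Rohrlich's papers on non-vanishing of twisted Hecke $L$-series along $\Z_p$-towers, and have been refined by Chinta--Friedberg--Hoffstein and Greenberg--Vatsal to guarantee prescribed ramification behaviour.

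The main subtlety worth flagging is the requirement that the admissible characters be ramified at \textit{both} $\gp$ and $\gq$, not just one of them; it is important that non-vanishing survives this two-sided ramification constraint, since a specialisation such as $Y\mapsto 0$ would fall outside the interpolation range and cannot be used as a shortcut to the one-variable non-vanishing $(L^\sharp(X),L^\flat(X))\neq(0,0)$. Once the two-sided non-vanishing is granted, the contradiction is immediate, and the argument parallels Pollack's verification in the cyclotomic setting \cite{pollack}.
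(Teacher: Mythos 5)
Your proposal is correct and takes essentially the same route as the paper, whose entire proof is ``Rohrlich [page 1], combined with the above interpolation'' --- i.e.\ exactly the contradiction you spell out: vanishing of the matrix would force all four $L_{\xi,\eta}$ to vanish, hence $L(E,\chi,1)=0$ for every character in the interpolation range, contradicting Rohrlich's non-vanishing theorem. The only difference is that you make explicit the requirement that the non-vanishing twist be ramified at both $\gp$ and $\gq$, a point the paper leaves implicit in its citation.
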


\begin{proof}Rohrlich \cite[page 1]{rohrlich}, combined with the above interpolation.
\end{proof}

\begin{lemma}[(multivariable $p$-adic Weierstra\ss \vspace{0mm} preparation theorem)]\label{cutelemma}
Let $\O$ be the ring of integers in a $p$-adic field, and let $f\in\O[[X_1,\cdots,X_n]]$. Then for integers $\mu\geq0,\lambda_1\geq0,\cdots \lambda_n\geq0$ we have
$$ f= p^\mu\prod_{i\geq1}^{n-1}(X_i^{\lambda_i}+a_{\lambda_i-1,i}X_i^{\lambda_i-1}+a_{\lambda_i-2}X_i^{\lambda_i-2}+\cdots+a_{1,i}X_i+a_{0,i})U$$
 for a unit $ U\in\O[[X_1,\cdots,X_n]]^\times $ and $ a_{l,i}\in(p,X_1,\cdots,X_{i-1}).$
\end{lemma}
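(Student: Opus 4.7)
The proof proceeds by induction on the number of variables $n$, bootstrapping from the classical one-variable $p$-adic Weierstra\ss{} preparation theorem by iterating it over complete Noetherian local rings. For the base case $n=1$, one writes $f = p^{\mu} f'$ with $f'$ not divisible by $p$, lets $\lambda_1$ be the smallest index at which the $X_1^{\lambda_1}$-coefficient of $f' \bmod p$ is a unit in $\O/p$, and invokes the classical theorem to obtain $f' = P_1(X_1)\, U_1$ with $P_1$ distinguished of degree $\lambda_1$ (coefficients in $(p)$) and $U_1 \in \O[[X_1]]^{\times}$.

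For the inductive step ($n \geq 2$), I assume the lemma in $n-1$ variables and view $f$ as a power series in $X_n$ with coefficients in the complete Noetherian local ring $A := \O[[X_1, \ldots, X_{n-1}]]$, whose maximal ideal is $\gm_A = (p, X_1, \ldots, X_{n-1})$. After extracting the maximal power of $p$, the Weierstra\ss{} preparation theorem over $A[[X_n]]$ produces a factorization $f = p^{\mu}\, P_n(X_n)\, V$, where $P_n = X_n^{\lambda_n} + a_{\lambda_n - 1, n} X_n^{\lambda_n - 1} + \cdots + a_{0, n}$ is distinguished with coefficients $a_{l, n} \in \gm_A$ (exactly matching the required form for the $i = n$ factor), and $V \in A[[X_n]]^{\times} = \O[[X_1, \ldots, X_n]]^{\times}$. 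Applying the inductive hypothesis to an appropriate specialization of $V$ (for instance its reduction modulo $X_n$, which is a unit in $A$) supplies the remaining distinguished polynomials $P_i(X_i)$ for $i < n$, with coefficients in the correct ideals $(p, X_1, \ldots, X_{i-1})$. The final unit absorbs all the residual unit factors from the successive reductions.

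The main technical obstacle is the degenerate case in which $f / p^{\mu}$ is \emph{not} regular in $X_n$ modulo $\gm_A$---that is, when all of its $X_n$-coefficients already lie in $\gm_A$---so that the direct application of the Weierstra\ss{} theorem in $X_n$ over $A$ fails. In that situation one needs either a Nagata-type change of variables $X_i \mapsto X_i + X_n^{N_i}$ (for suitably large $N_i$) to force regularity in $X_n$, followed by a careful reversal of the substitution that preserves the required ideal memberships $a_{l, i} \in (p, X_1, \ldots, X_{i-1})$, or else to perform an initial Weierstra\ss{} factorization with respect to a different variable (an earlier $X_j$, where $f$ is already regular over the corresponding smaller local ring) before continuing the induction. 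Verifying that each $P_i(X_i)$ produced this way has its coefficients in the \emph{specific} ideal $(p, X_1, \ldots, X_{i-1})$---rather than in the larger ideal involving later variables that Weierstra\ss{} over $A$ would naively produce---is the delicate bookkeeping step that makes the proof nontrivial.
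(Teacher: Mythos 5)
Your overall strategy --- induction on the number of variables, applying the one-variable $p$-adic Weierstra\ss{} preparation theorem over the complete local coefficient ring $A=\O[[X_1,\dots,X_{n-1}]]$ --- is the same route the paper takes (its proof consists of citing \cite[Theorem 7.3]{lang} or \cite[Corollary 3.2]{venjakob} and invoking induction). To your credit, you isolate exactly the point that the paper's two-sentence argument passes over: the one-variable theorem over $A$ applies only when $f/p^{\mu}$ is regular in $X_n$, i.e.\ when not all of its $X_n$-coefficients lie in $\gm_A=(p,X_1,\dots,X_{n-1})$. (A minor structural quibble: in the regular case your unit $V$ needs no further factorization --- applying the lemma to a unit yields only trivial factors --- so the inductive hypothesis is not what supplies the $P_i$ for $i<n$; the induction is only ever needed in the degenerate case.)

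That degenerate case is a genuine gap in your proposal, and it cannot be repaired, because the lemma as stated is false. Take $n=2$ and $f=p+X_1X_2$. Reducing a putative factorization $f=p^{\mu}P_1(X_1)P_2(X_2)U$ modulo $p$ forces $\mu=0$; then $\overline{P_1}=X_1^{\lambda_1}$ must divide $X_1X_2$, and setting $X_1=0$ rules out $\lambda_1=0$, so $\lambda_1=\lambda_2=1$. Specializing $X_2=0$ then gives $p=(X_1+a_{0,1})\,a_{0,2}(X_1)\,U(X_1,0)$ in $\O[[X_1]]$, and substituting $X_1=-a_{0,1}\in p\O$ yields $p=0$, a contradiction. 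Neither of your proposed remedies applies here: $p+X_1X_2$ is non-regular in \emph{every} variable, and the Nagata substitution $X_1\mapsto X_1+X_2^{N}$ produces a single distinguished polynomial in the new coordinates that cannot be converted back into a product of per-variable distinguished factors with the triangular ideal memberships $a_{l,i}\in(p,X_1,\dots,X_{i-1})$ --- the ``delicate bookkeeping step'' you flag is precisely where any such argument must fail. So the obstacle you identified is a defect of the statement itself, not merely of the proof; a correct version needs either a regularity hypothesis in the last variable or a weaker conclusion (e.g.\ only $f=p^{\mu}g$ with $g\notin(p)$, combined with one-variable preparation after specializing all but one variable), which is in effect all that the application in Proposition~\ref{integrality} requires.
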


\begin{proof}We need to prove that such $f$ can be written as $f=\prod_{i\geq0}^{n-1}f_i$, with $f_0\in\gm=$(maximal ideal in $\O$), and $f_i\in\gm_i\setminus\gm_{i-1}$ for $i\geq1$, where $\gm_i=$(maximal ideal in $\O[[X_1,\cdots,X_i]]$. But this follows from induction on the one-variable $p$-adic Weierstra\ss \vspace{0mm} preparation theorem \cite[Theorem 7.3]{lang}, or \cite[Corollary 3.2]{venjakob}.
\end{proof}

\begin{proposition}\label{integrality} The functions $L^{\sharp\sharp},L^{\sharp\flat},L^{\flat\sharp},L^{\flat\flat}$ are in $\Lambda_K$.
\end{proposition}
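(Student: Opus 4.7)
The plan is to combine the one-variable integrality theorem stated before Theorem~\ref{lei} with a specialization argument and the multivariable Weierstraß preparation theorem (Lemma~\ref{cutelemma}). Let $\mathbf{L}$ denote the left-hand matrix of $L_{\xi,\eta}$'s in Theorem~\ref{lei}, and $\mathbf{L}^\natural$ the matrix $\smat{L^{\sharp\sharp}&L^{\flat\sharp}\\L^{\sharp\flat}&L^{\flat\flat}}$. Suppose for contradiction that some entry $L^{\star\star}$ is not in $\Lambda_K$, and write $L^{\star\star}=p^{-\mu}g$ with $\mu\geq 1$ minimal, $g\in\Lambda_K$, and $p\nmid g$.

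First, I specialize $Y\mapsto\zeta_{p^{n+1}}-1$ for varying $n\geq 1$, corresponding to twisting by a finite-order character $\psi_n$ of the $\gq$-part of $\Gal(K_\infty/K)$. Plugging this into the matrix factorization of Theorem~\ref{lei} reduces it to a one-variable matrix equation in $X$: the specialization $\mathbf{L}(X,\zeta_{p^{n+1}}-1)$ is a matrix of one-variable $p$-adic $L$-functions attached to $E$ twisted by $\psi_n^{-1}$, and the one-variable decomposition theorem applied to this twisted weight-two eigenform (to which the general supersingular theory of \cite{shuron} applies since $a_p$ is unchanged under twisting by a character trivial at $p$-inverse primes) produces an integral factorization through $\Log(X)$ with entries in $\mathcal{O}_n[[X]]$, where $\mathcal{O}_n$ is the ring of integers of $\Q_p(\alpha,\zeta_{p^{n+1}})$. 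Comparing with the $Y\mapsto\zeta_{p^{n+1}}-1$ specialization of Theorem~\ref{lei} and cancelling $\Log(X)$ on the right (a non-zero-divisor in the relevant matrix ring), I obtain
$$\Log(\zeta_{p^{n+1}}-1)^T\cdot\mathbf{L}^\natural(X,\zeta_{p^{n+1}}-1)\in M_2(\mathcal{O}_n[[X]]).$$
A direct computation with the defining telescoping limit of $\Log(X)$ at a primitive $p^{n+1}$-th root of unity bounds the $p$-adic denominator of $\Log(\zeta_{p^{n+1}}-1)^{-1}$ uniformly in $n$ by some $p^M$, yielding $p^M\cdot\mathbf{L}^\natural(X,\zeta_{p^{n+1}}-1)\in M_2(\mathcal{O}_n[[X]])$ for all large $n$.

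On the other hand, I apply Lemma~\ref{cutelemma} to $g$ in the variables $Y,X$, obtaining a factorization $g=g_1(Y)\cdot g_2(X,Y)\cdot U$ with $U\in\Lambda_K^\times$, $g_1$ a distinguished polynomial in $Y$ over $\Z_p$, and $g_2$ distinguished in $X$ over $\Z_p[[Y]]$. For all $n$ strictly larger than $\deg(g_1)$, the element $g_1(\zeta_{p^{n+1}}-1)$ is a unit in $\mathcal{O}_n$, hence $g(X,\zeta_{p^{n+1}}-1)$ has $p$-adic valuation zero in $\mathcal{O}_n[[X]]$. Therefore $L^{\star\star}(X,\zeta_{p^{n+1}}-1)=p^{-\mu}g(X,\zeta_{p^{n+1}}-1)$ has denominator exactly $p^\mu$; picking $n$ with $\mu>M$ contradicts the uniform integrality established above, forcing $\mu\leq 0$ and thus $L^{\star\star}\in\Lambda_K$.

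The main obstacle is the uniform bound $M$ on the denominators of $\Log(\zeta_{p^{n+1}}-1)^{-1}$: naively these denominators appear to grow with $n$ because the truncated product $C_1(X)\cdots C_{n+1}(X)$ degenerates at $X=\zeta_{p^{n+1}}-1$ (the last factor $C_{n+1}$ has zero determinant there). Ruling this out requires a careful telescoping analysis showing that this degeneracy is exactly absorbed by the explicit compensating factors $\smat{a_p&1\\-p&0}^{-(n+2)}$ already present in the definition of $\Log$, so that $\Log(\zeta_{p^{n+1}}-1)$ remains invertible with denominators coming only from the fixed change-of-basis matrix $\smat{-1&-1\\\beta&\alpha}$. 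Once this is established, the Weierstraß-preparation step above is essentially formal.
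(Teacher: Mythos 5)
Your overall strategy (specialize $Y$ at $p$-power roots of unity and play denominators against the Weierstrass preparation of Lemma \ref{cutelemma}) is in the same spirit as the paper's proof, which evaluates at $X=\zeta_{p^n}-1$, $Y=\zeta_{p^m}-1$ for large $n,m$ and contrasts the growth forced by a hypothetical negative $\mu$-invariant with Manin's integrality of the interpolated values. But there is a fatal gap at exactly the step you flag as the ``main obstacle'': the matrix $\Log(\zeta_{p^{n+1}}-1)$ is not invertible, so no bound $p^M$ on the denominators of its inverse can exist. Indeed $\det C_k(Y)=\Phi_{p^k}(1+Y)$, so $\det\Log(Y)$ is, up to the nonzero constant $(\beta-\alpha)p^{-2}$, equal to $\log_p(1+Y)/Y$, which vanishes at every $Y=\zeta_{p^m}-1$ with $m\geq1$. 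The compensating factors $\smat{a_p&1\\-p&0}^{-(n+2)}$ are invertible matrices of determinant $p^{-(n+2)}$; they rescale the determinant but cannot repair the rank drop caused by the singular factor $C_{n+1}(\zeta_{p^{n+1}}-1)=\smat{a_p&1\\0&0}$, and multiplying a rank-one matrix by invertible matrices leaves it of rank one. Consequently your displayed conclusion $p^M\cdot\mathbf{L}^\natural(X,\zeta_{p^{n+1}}-1)\in M_2(\mathcal{O}_n[[X]])$ does not follow: left multiplication by the rank-one matrix $\Log(\zeta_{p^{n+1}}-1)^T$ only controls a one-dimensional piece of $\mathbf{L}^\natural(X,\zeta_{p^{n+1}}-1)$, and the ``careful telescoping analysis'' you defer to cannot deliver invertibility.

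The repair is to avoid inverting $\Log$ altogether, which is what the paper does. At $(X,Y)=(\zeta_{p^n}-1,\zeta_{p^m}-1)$ the nonzero entries of $\Log(\zeta_{p^n}-1)$ have computable valuations growing roughly like $-n/2$ (since $v_p(\alpha)=v_p(\beta)=\tfrac12$), while Manin's theorem bounds $v_p\bigl(L_{\xi\eta}(\zeta_{p^n}-1,\zeta_{p^m}-1)\bigr)$ below by $-(n+m)/2+O(1)$. If some $L^{\star\star}$ had $\mu<0$, then by Lemma \ref{cutelemma} its values at these points would have valuation tending to $\mu<0$ (the $\lambda$-contributions $\lambda_i/\varphi(p^n)$ tend to $0$), and inserting this into the matrix identity of Theorem \ref{lei} violates the interpolation bound for $n,m$ large; this is what the paper means by the $\mu$- and $\lambda$-invariants ``dominating.'' Your use of Weierstrass preparation to show that $g(X,\zeta_{p^{n+1}}-1)$ is not divisible by $p$ for large $n$ is sound and is precisely the intended role of Lemma \ref{cutelemma}; it is only the inversion of $\Log$ at roots of unity that cannot be made to work.
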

\begin{proof}This follows from interpolation at $X=\zeta_{p^n}-1$ and $Y=\zeta_{p^m}-1$ for large $n,m$, in which the $\mu$-invariant and the $\lambda_1$- and $\lambda_2$-invariant dominate. The positivity of the $\mu$-invariant then follows from \cite[page 375]{manin}, which guarantees integrality of the interpolating values. \end{proof}

\begin{choice}\label{assumption}From \cite[Proposition 6.14]{shuron}, we see that for a choice $\bullet\in \{\sharp,\flat\}$, the one-variable $p$-adic $L$-function $L^\bullet$ is non-zero. Then we pick $K$ so that the $\bullet$ $p$-adic $L$-function $L^{\bullet_K}$ of the elliptic curve $E^{(K)}$ (i.e. the quadratic twist by the character corresponding to $K$) is also not zero. 
\end{choice}
We point out that this is possible in view of a positive proportion of elliptic curves having rank $0$ \cite{manjarul}, combined with a positive proportion satisfying the Birch and Swinnerton-Dyer conjecture \cite{bsz}, and \cite[Table before Proposition 6.14]{shuron}.

\textbf{The algebraic side.}

\begin{lemma}\label{tracecompatibility} We can choose a system of points $c_{(n,m)}\in\Ehat(\gm_{n,m})$ satisfying
$$\tr_{k_{n,m+1}/k_{n,m}}c_{(n,m+1)}=c_{(n,m)}, \text{ and }$$
$$\tr_{k_{n,m}/k_{n-1,m}}c_{(n,m)}=a_pc_{(n-1,m)}-c_{(n-2,m)}.$$
\end{lemma}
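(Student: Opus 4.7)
My approach is to generalize Kobayashi's construction of the system $\{c_n\}$ from the one-variable cyclotomic setting to the two-variable (cyclotomic $\times$ unramified) setting, keeping the Honda/Coleman power series formalism intact. The key structural input is that for a supersingular elliptic curve with Hecke polynomial $Y^2-a_pY+p$, the formal group $\Ehat$ is a Honda formal group of type $u=\phi^2-a_p\phi+p$, so its logarithm $\log_{\Ehat}$ satisfies a Honda-type functional equation which, after formally inverting, encodes precisely the recursion $a_p\cdot(\text{previous})-(\text{before that})$ that appears in the ramified trace relation.

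First, for each fixed $m$, I would construct the cyclotomic sub-system $(c_{(n,m)})_n$ exactly as in \cite[Proposition~8.7, Proposition 8.11]{shuron} (which generalizes Kobayashi's original construction in \cite{kobayashi} to general supersingular primes) but carried out over the base $O_{k^m}$ rather than $\Z_p$. Concretely, one picks a power series $f^{(m)}(X)\in O_{k^m}[[X]]$ whose Coleman-style evaluation $f^{(m)}(\zeta_{p^{n+1}}-1)$ lies in the formal group and defines $c_{(n,m)}$. The ramified trace relation follows because the trace $\tr_{k_{n,m}/k_{n-1,m}}$ acts on such power series like the norm operator, and together with the Honda relation on the logarithm this produces exactly $\tr c_{(n,m)}=a_p c_{(n-1,m)}-c_{(n-2,m)}$; this is a direct analogue of \cite[Lemma 8.9, Lemma 8.10]{shuron} or \cite[Proposition 4.8]{kobayashi}, and the choice of $\log_{\Ehat}$ does not involve the base, so the computation carries through verbatim over $O_{k^m}$.

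Second, to arrange the unramified trace-compatibility $\tr_{k_{n,m+1}/k_{n,m}}c_{(n,m+1)}=c_{(n,m)}$, I would choose the power series $f^{(m)}$ coherently in $m$ using Frobenius. In the unramified tower, $\Gal(k^{m+1}/k^m)$ is generated by (a lift of) the Frobenius $\phi$, so $\tr_{k_{n,m+1}/k_{n,m}}$ corresponds on power series to $1+\phi+\cdots+\phi^{p-1}$. The Honda formal group over $O_{k^{\mathrm{ur}}}$ is Frobenius-stable in a canonical way, so I can pick a universal Coleman power series $f\in O_{\widehat{k^{\mathrm{ur}}}}[[X]]$ and define $f^{(m)}$ as its image in $O_{k^m}[[X]]$; the unramified trace relation is then a formal identity coming from the Frobenius compatibility. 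Equivalently, one may construct the system by norm-compatible Kummer-descent from a single element of the two-variable Iwasawa cohomology and then verify both relations on the nose.

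\textbf{Main obstacle.} The chief technical difficulty is the simultaneous control of the two directions: when $a_p=0$ one has the clean $\pm$-decomposition that makes each variable essentially independent, but for $a_p\neq 0$ the Honda polynomial $\phi^2-a_p\phi+p$ does not split over $\Z_p$ and the two variables become intertwined. One must therefore choose the power series $f^{(m)}$ carefully enough that both the ramified Honda recursion and the unramified Frobenius compatibility hold with no error terms; verifying this compatibility on the level of Coleman power series (rather than only modulo some norm) is where the real work lies, and is the step most in need of the Honda-theoretic formalism adapted to the general supersingular case developed in \cite{shuron}.
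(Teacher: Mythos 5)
Your overall framework (Honda theory for the formal group of type $t^2-a_pt+p$, explicit points built from Coleman-style power series, separate verification of the two trace relations) is the same as the paper's, but two concrete steps are missing, and the first is a genuine gap. For the unramified relation, your mechanism --- take a ``universal'' power series over $\O_{\widehat{k^{\mathrm{ur}}}}$ and define $f^{(m)}$ as ``its image'' in $\O_{k^m}[[X]]$ --- is not well-defined (there is no projection $\O_{\widehat{k^{\mathrm{ur}}}}\to\O_{k^m}$), and even if you used a single fixed parameter $u$ at all higher levels, the trace $\tr_{k_{n,m+1}/k_{n,m}}$ would multiply by the degree $p$ rather than return $c_{(n,m)}$. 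What the paper actually does (following Wan's Lemma 2.2) is choose a trace-compatible system $d=\{d_m\}_m\in\varprojlim_m\O_{k^m}$, i.e. $\tr_{k^{m+1}/k^m}d_{m+1}=d_m$, and use $d_m$ as the parameter $u$ in the construction of $c_{(n,m)}:=c_{n,d_m}$; since every term of $\log_{\Ehat}(c_{n,u})$ is linear in a Frobenius twist of $u$, the trace-compatibility of the points is inherited from that of the $d_m$. This choice is the whole content of the unramified direction, and your proposal does not contain it.

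For the ramified relation, the naive points with $\log_{f_u^{\phi^{-n}}}(u^{\phi^{-n}}(\zeta_{p^n}-1))=\sum_{k}x_k\pi_{n-k,u}$ do \emph{not} satisfy the clean three-term recursion: since $\tr_{k(\zeta_{p^n})/k(\zeta_{p^{n-1}})}(\pi_{n,u})=-pu^{\phi^{-n}}$, a boundary term survives. The paper removes it by adding the correction $\lambda_{n,u}=\sum_{i\geq1}b_iu^{\phi^{-(n+i+1)}}p^{\lceil i/2\rceil}$ with $b_{i+2}=a_pb_{i+1}-b_i$, chosen exactly so that $p\lambda_{n,u}-pu^{\phi^{-n}}=a_p\lambda_{n-1,u}-\lambda_{n-2,u}$, while the recursion for the $x_k$ coming from the matrix $\smat{a_p & p\\ -1 & 0}$ handles the remaining sum. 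You correctly point to Kim's and Kobayashi's constructions, where the $a_p=0$ analogue of this correction appears, so this second omission is recoverable; but as written your plan asserts the recursion holds ``with no error terms'' without supplying the device that makes the error terms vanish.
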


This lemma generalizes \cite[Lemma 2.2]{wan} and the constructions in \cite[Proposition 3.12]{kimparity}, both of which treat the $a_p=0$ case. Its proof needs the following definitions.

Given $u\in\O_k^\times$, put $f_u(X):=(u+X)^p-u^p$. Also, we let
$$(x_k,x_{k-1}):=(1,0)A^k\times\frac{1}{p^k} \text{for $k\geq0$},$$
where $A=\smat{a_p & p \\ -1 & 0}$ as in \cite[Definition 2.1]{shuron}. We let $\phi$ be the Frobenius on $k$ and let
$$ f_u^{(k)}:=f_u^{(k)}(X):=f_u^{\phi^{k-1}}\circ f_u^{\phi^{k-2}}\circ\cdots\circ f_u.$$
The logarithm giving rise to our formal group via Honda theory is the power series
$$\log_{f_u}(X):=\sum_{k\geq0}^\infty x_kf_u^{(k)}(X).$$
Now define a sequence $b_i$ via
$$b_1=1,b_2=a_p,\text{ and } b_{i+2}:=a_pb_{i+1}-b_i.$$
This allows us to define the following.
\begin{definition} We put 
$\lambda_{n,u}:=\sum_{i\geq1}^\infty b_i u^{\phi^{-(n+i+1)}}p^{\lceil \frac{i}{2} \rceil}.$
\end{definition}

As in \cite[page 54]{kimparity}, these elements give rise to points \footnote{For BD Kim's notation, the analogue of $\lambda_{n,u}$ is denoted $\lambda_n$ and that of $c_{n,u}$ is denoted $b_n$.} $c_{n,u}\in\Ehat(\gm_{k(\zeta_{p^n}})$ so that
$$\begin{array}{ll}\log_\Ehat (c_{n,u})& = \lambda_{n,u}+\log_{f_u^{\phi^{-n}}}(u^{\phi^{-n}}\cdot (\zeta_{p^n}-1))\\
&=\lambda_{n,u}+\sum_{k\geq0}^\infty x_k\pi_{n-k,u}\end{array}$$
for trace-compatible uniformizers $\pi_{n-k,u}=\pi_{n-k}u$ in $\gm_{k(\zeta_{p^n})},$ where $\pi_{n-k}=\left(\zeta_{p^{n-k}}-1\right).$

\begin{proof}[Proof of Lemma \ref{tracecompatibility}]
To make the first trace relation work, choose trace-compatible $d:=\{d_m\}_m\in\varprojlim_m \O_{k^m}$ as in \cite[Lemma 2.2]{wan}. The calculations in the proof of \cite[Lemma 2.2]{wan} with the $d_m$ expressed as sums of roots of unity then work, once the coefficients of $\log_\Ehat$ are appropriately modified. More precisely, we put $$\log_{\hat{E}}(c_{n,{d_m}})=\sum_i b_i d_m^{\phi^{-(n+i+1)}}+\sum_{k<n}x_k(\zeta_{p^{n-k}}-1)d_m^{\phi^{k-n}}.$$
For the second trace relation, we refer to the calculations in \cite[page 54]{kimparity}. To make them work for the case $a_p\neq0$, note that
$$
\begin{array}{rl}
  \tr_{k(\zeta_{p^n})/k(\zeta_{p^{n-1}})}\log_\Ehat(c_{n,u})&=  \tr_{k(\zeta_{p^n})/k(\zeta_{p^{n-1}})}(\lambda_{n,u}+\pi_{n,u}+\sum_{k\geq1}^\infty x_k \pi_{n-k,u})   \\
  &   =   p\lambda_{n,u}-u^{\phi^{-n}}p+\sum_{k\geq1}^\infty x_k\pi_{n-k,u}\\
  &   =   a_p(\lambda_{n-1,u}+\sum_{k\geq1}^\infty x_{k-1}\pi_{n-k,u})-\lambda_{n-2,u}-\sum_{k\geq2}^\infty x_{k-2}\pi_{n-k,u}\\
  & =a_p\log_\Ehat(c_{n-1,u})-\log_\Ehat(c_{n-2,u}).
\end{array}
$$
\end{proof}

\begin{definition}We define a pairing $P_{(n,m),x}:H^1(k_{n,m},T)\to\Lambda_{n,m}$ by
$$ z \mapsto \sum_{\sigma\in\Gal(k_{n,m}/\Q_p)}(x^\sigma,z)_{n,m}\sigma \text{ for } x \in \F(\gm_{n,m}), \text{ where }$$
$(\quad,\quad)_{n,m}:\F(\gm_{n,m})\times H^1(k_n,T)\to H^2(k_{n,m},\Z_p(1))\cong\Z_p$ is the pairing coing from the cup product.
\end{definition}

\begin{definition}\label{hn}Put $\H_n(X):=-C_1(X)C_2(X)\cdots C_{n-1}(X)\smat{1 & 0 \\ 0 & \Phi_{p^n}(1+X)}$ and define the endomorphism $h_{n,m}$ 
\[
\begin{array}{cccc}
\Lambda_{n,m}\directsum\Lambda_{n,m}  &  \xrightarrow{h_{n,m}}  &   \Lambda_{n,m}\directsum\Lambda_{n,m}\H_n& \subset \Lambda_{n,m}\directsum\Lambda_{n,m} \text{      by}\\
 (a,b) & \mapsto  & (a,b)\H_n  \\
\end{array}
\]

\end{definition}

\begin{remark}The minus sign ensures that our conventions agree with the original ones of Kobayashi, see e.g. \cite[Definition 3.8]{shuron}.
\end{remark}

\begin{proposition}\label{Colemanmapexistence}There exists a unique map $\col_{n,m}$ so that the following commutes:

\[\xymatrixcolsep{11pc}\xymatrix {H^1(k_{n,m},T)\ar@/^13mm/@{-->}[0,2]^{\hspace{2.5mm}\exists !\col_{n,m}}\ar[r]^{(P_{(n,m),c_{n,m}}, P_{(n,m),c_{n-1,m}})}& {\Lambda_{n,m}\oplus \Lambda_{n,m}} &\ar@{_(->}[l]_{h_{n,m}}\dfrac{\Lambda_{n,m} \oplus \Lambda_{n,m}}{\ker h_{n,m}}
}
\]

\end{proposition}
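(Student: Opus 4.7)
\medskip

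\noindent\textbf{Proof proposal.} Uniqueness is essentially automatic from the setup: by Definition \ref{hn}, the target of $h_{n,m}$ is the quotient $(\Lambda_{n,m}\oplus\Lambda_{n,m})/\ker h_{n,m}$, so $h_{n,m}$ is injective by construction. Thus any factorization $\col_{n,m}$ of the composite $(P_{(n,m),c_{n,m}},P_{(n,m),c_{n-1,m}})$ through $h_{n,m}$ is unique, provided it exists. The entire content of the proposition is therefore existence, i.e.\ verifying that the image of $z\mapsto (P_{(n,m),c_{n,m}}(z),P_{(n,m),c_{n-1,m}}(z))$ lies inside the image of $h_{n,m}$, that is, inside the submodule $\{(a,b)\H_n : a,b\in\Lambda_{n,m}\}\subseteq \Lambda_{n,m}\oplus\Lambda_{n,m}$.

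The strategy is to exploit the trace compatibilities of Lemma \ref{tracecompatibility} to inductively unravel the matrix $\H_n=-C_1\cdots C_{n-1}\,\mathrm{diag}(1,\Phi_{p^n}(1+X))$. The second trace relation $\tr_{k_{n,m}/k_{n-1,m}}c_{(n,m)}=a_p c_{(n-1,m)}-c_{(n-2,m)}$ is encoded exactly by the matrix $C_n(X)=\smat{a_p & 1\\-\Phi_{p^n}(1+X)&0}$ after one takes into account how $\Phi_{p^n}(1+X)$ acts on $\Lambda_{n,m}$. Concretely, I would establish, for each $0\le i\le n-1$, an equality in $\Lambda_{i,m}\oplus\Lambda_{i,m}$ of the form
$$\bigl(\mathrm{pr}_{i}P_{(n,m),c_{n,m}}(z),\ \mathrm{pr}_{i}P_{(n,m),c_{n-1,m}}(z)\bigr)=\bigl(\mathrm{pr}_{i}P_{(n,m),c_{i+1,m}}(z),\ \mathrm{pr}_{i}P_{(n,m),c_{i,m}}(z)\bigr)\cdot C_{i+1}(X)\cdots C_{n-1}(X),$$
where $\mathrm{pr}_i$ is projection modulo the augmentation in $\Gal(k_{n,m}/k_{i,m})$; here the projection formula for the local cup product (identifying $(x^\sigma,z)_{n,m}$ with $(x^\sigma,\mathrm{cor}(z))_{i,m}$ after summing over cosets) together with the trace relation translates directly into right multiplication by $C_{i+1}(X)$. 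Iterating down to $i=0$ and picking up the factor $\Phi_{p^n}(1+X)$ from the fact that $c_{n,m}$ lies in $\widehat{E}(\gm_{n,m})$ (so the pairing kills elements annihilated by $\Phi_{p^n}(1+X)$ appropriately), one obtains the desired factorization through $\H_n$.

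The main obstacle will be the bookkeeping of the $\Phi_{p^n}$-factor and the sign on the outer side of $\H_n$, since when $a_p\ne 0$ the second row of $C_n(X)$ is no longer purely off-diagonal; one must be careful that the \emph{two} coordinates $P_{(n,m),c_{n,m}}$ and $P_{(n,m),c_{n-1,m}}$ really span the module generated by $c_{(n,m)}$ and $c_{(n-1,m)}$ compatibly, rather than getting entangled with the $c_{(n-2,m)}$-term in the recursion. This is precisely the reason the definition uses the pair $(c_{n,m},c_{n-1,m})$ rather than a single element, and the reason $\H_n$ is built out of two-by-two matrices instead of scalars. In the $a_p=0$ case this entanglement disappears (one recovers Kobayashi's $\pm$-Coleman maps defined separately), so the new content here is to verify that the two components must be handled in tandem, mirroring the phenomenon emphasized in the introduction.

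Once these compatibilities are established, one concludes by passing to the limit in $i=0$: the pair lies in $(\Lambda_{n,m})^{\oplus 2}\cdot (-C_1\cdots C_{n-1})\cdot \mathrm{diag}(1,\Phi_{p^n}(1+X))=(\Lambda_{n,m})^{\oplus 2}\cdot \H_n$, which is the image of $h_{n,m}$. Dividing out by $\ker h_{n,m}$ yields the well-defined map $\col_{n,m}$, whose uniqueness we have already noted.
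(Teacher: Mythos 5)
Your identification of where the content lies is correct: the map out of $(\Lambda_{n,m}\oplus\Lambda_{n,m})/\ker h_{n,m}$ in the diagram is injective, so uniqueness is automatic and everything rests on showing that the image of $z\mapsto(P_{(n,m),c_{n,m}}(z),P_{(n,m),c_{n-1,m}}(z))$ lands in the submodule $(\Lambda_{n,m}\oplus\Lambda_{n,m})\H_n$. Your mechanism for this --- the projection formula for the cup product combined with the trace relations of Lemma \ref{tracecompatibility}, which convert corestriction into right multiplication by the matrices $C_i$ --- is exactly the mechanism of the argument the paper relies on. Note, though, that the paper does not reprove anything: its proof is a citation of \cite[Proposition 5.3]{shuron} together with the observation that the only new input over the unramified tower is the availability of the pairing $P_{(n,m),x}$, the points and their trace relations being supplied by Lemma \ref{tracecompatibility}. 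Your sketch is therefore an outline of the cited argument rather than a different route.

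As written, however, the sketch has a genuine gap at the last step. Your displayed identities relate the projections $\mathrm{pr}_i$ of the pair of pairings to one another, but relations among projections (equivalently, among evaluations at characters of $\Gal(k_{n,m}/k^m)$) do not by themselves yield membership in $(\Lambda_{n,m}\oplus\Lambda_{n,m})\H_n$. One needs the converse direction: a characterization of that submodule as precisely the pairs whose evaluation at each character $\chi$ lies in the row span of $\chi(\H_n)$ --- which drops rank at every non-trivial $\chi$, since $\det\H_n$ is up to sign $\prod_{1\leq i\leq n}\Phi_{p^i}(1+X)$ --- together with an integrality argument to descend from $\Lambda_{n,m}\otimes\Q_p$ to $\Lambda_{n,m}$ (the determinant is a power of $p$ at the trivial character). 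This characterization is the actual content of \cite[Section 3]{shuron} and is nowhere supplied in your proposal. A smaller but real slip: the factor $\smat{1&0\\0&\Phi_{p^n}(1+X)}$ in $\H_n$ does not come from $c_{n,m}$ lying in $\Ehat(\gm_{n,m})$; it arises because the second component $c_{n-1,m}$ is defined over the subfield $k_{n-1,m}$, so $(c_{n-1,m}^\sigma,z)_{n,m}$ depends only on $\sigma$ modulo $\Gal(k_{n,m}/k_{n-1,m})$ and $P_{(n,m),c_{n-1,m}}(z)$ is divisible by the corresponding norm element, which is the image of $\Phi_{p^n}(1+X)$ in $\Lambda_{n,m}$.
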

\begin{proof}\cite[Proposition 5.3]{shuron} proves this in the case in which $k^m=\Q_p$, relying on the arguments from \cite[Section 3]{shuron}. These arguments work for our purposes since the pairing $P_{(n,m)},x$ is available in the $k^m$ case as well.
\end{proof}

\begin{lemma} The Coleman maps are compatible in the unramified direction, i.e. the following diagram commutes:
\[\xymatrixcolsep{5pc}\xymatrix {H^1(k_{n,m+1},T)\ar[r]^{\col_{n,m+1}}\ar[d]^{\cor}\ar@{}[dr]|\circlearrowleft& \dfrac{\Lambda_{n,m+1} \oplus \Lambda_{n,m+1}}{\ker h_{n,m+1}}\ar[d]^{\proj}\\
H^1(k_{n,m},T)\ar[r]^{\col_{n,m}}& \frac{\Lambda_{n,m} \oplus \Lambda_{n,m}}{\ker h_{n,m}}}
\]
\end{lemma}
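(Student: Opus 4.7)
The plan is to reduce the commutativity of the diagram to a compatibility statement for the pair of cup-product pairings defining $\col_{n,m}$, and then to verify that statement using the trace-compatibility of the system $\{c_{(n,m)}\}$ from Lemma \ref{tracecompatibility} together with the adjunction between restriction and corestriction under the local cup product.

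First I would observe that $\H_n$ has entries in $\Z_p[X]$ and so involves only the ramified (cyclotomic) variable, while the projection $\Lambda_{n,m+1}\twoheadrightarrow\Lambda_{n,m}$ is induced by the Galois projection on the unramified direction; hence right-multiplication by $\H_n$ commutes with $\proj$, yielding $\proj\circ h_{n,m+1}=h_{n,m}\circ\proj$. Combined with the uniqueness clause of Proposition \ref{Colemanmapexistence}, this reduces the claim to the identity
$$P_{(n,m),c_{n,m}}(\cor z)=\proj\bigl(P_{(n,m+1),c_{n,m+1}}(z)\bigr), \qquad z\in H^1(k_{n,m+1},T),$$
together with the analogous identity for $(c_{n-1,m},c_{n-1,m+1})$ in place of $(c_{n,m},c_{n,m+1})$.

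To verify the identity, the key input is the adjunction $(x,\cor w)_{n,m}=(\res x,w)_{n,m+1}$ for the cup-product pairing, which is immediate from $x\cup\cor w=\cor(\res x\cup w)$ and $\inv_{k_{n,m}}\circ\cor=\inv_{k_{n,m+1}}$. Combining this with the decomposition $\res c_{n,m}^\sigma=\sum_{\tau}c_{n,m+1}^{\tau\tilde\sigma}$ supplied by the trace relation $\tr_{k_{n,m+1}/k_{n,m}}c_{(n,m+1)}=c_{(n,m)}$ (where $\tilde\sigma\in\Gal(k_{n,m+1}/\Q_p)$ is any lift of $\sigma$ and $\tau$ ranges over $\Gal(k_{n,m+1}/k_{n,m})$), a direct substitution and interchange of summation regroups the resulting double sum into a single sum indexed by $\Gal(k_{n,m+1}/\Q_p)$, and applying $\proj$ identifies the outcome with the right-hand side.

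I expect no substantive obstacle: the only care needed is Galois-theoretic bookkeeping, namely verifying that as $\sigma$ ranges over $\Gal(k_{n,m}/\Q_p)$ and $\tau$ over $\Gal(k_{n,m+1}/k_{n,m})$, the products $\tau\tilde\sigma$ exhaust $\Gal(k_{n,m+1}/\Q_p)$ bijectively. Everything is ultimately forced by the norm-compatibility of the $c_{(n,m)}$ in the unramified direction and by the standard functoriality of local Tate duality; the more delicate variant of this lemma in the ramified direction would additionally invoke the second trace relation of Lemma \ref{tracecompatibility} and the precise definition of $\H_n$, but none of that enters here.
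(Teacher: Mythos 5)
Your proposal is correct and follows the same route as the paper, whose proof is simply the remark that the lemma follows from the first trace relation of Lemma \ref{tracecompatibility}; you have spelled out the implicit details (the projection formula $(x,\cor w)_{n,m}=(\res x,w)_{n,m+1}$, the fact that $\H_n$ involves only the cyclotomic variable and hence commutes with $\proj$, and the uniqueness clause of Proposition \ref{Colemanmapexistence}). The Galois-theoretic bookkeeping you flag is exactly the content being suppressed in the paper, and it goes through as you describe.
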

\begin{proof}This follows from the first trace-compatibility stated in lemma \ref{tracecompatibility}.
\end{proof}

\begin{proposition} The Coleman maps are compatible in the totally ramified direction, i.e. the following diagram also commutes:
\[\xymatrixcolsep{5pc}\xymatrix {H^1(k_{n+1,m},T)\ar[r]^{\col_{n+1,m}}\ar[d]^{\cor}\ar@{}[dr]|\circlearrowleft& \dfrac{\Lambda_{n+1,m} \oplus \Lambda_{n+1,m}}{\ker h_{n+1,m}}\ar[d]^{\proj}\\
H^1(k_{n,m},T)\ar[r]^{\col_{n,m}}& \dfrac{\Lambda_{n,m} \oplus \Lambda_{n,m}}{\ker h_{n,m}}}
\]
\end{proposition}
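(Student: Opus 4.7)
The plan is to follow the template of \cite[Proposition 5.3]{shuron}, which establishes the analogous compatibility in the base case $k^m=\Q_p$. The key ingredients there are (i) the second trace relation of Lemma \ref{tracecompatibility}, and (ii) the Galois equivariance together with the cor/res adjointness of the local Tate pairing. The former is now available in the general unramified-extension setting thanks to Lemma \ref{tracecompatibility}, while the latter is standard; so the argument should carry through after properly accounting for the additional unramified coordinate.

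First, I would invoke the uniqueness in Proposition \ref{Colemanmapexistence} to reduce the problem to checking commutativity at the level of the defining pairings, i.e., to showing that for every $z\in H^1(k_{n+1,m},T)$ one has
\[
 \proj\bigl(P_{(n+1,m),c_{n+1,m}}(z),\, P_{(n+1,m),c_{n,m}}(z)\bigr)\cdot\H_{n+1}
 = \bigl(P_{(n,m),c_{n,m}}(\cor z),\, P_{(n,m),c_{n-1,m}}(\cor z)\bigr)\cdot\H_{n}
\]
in $\Lambda_{n,m}^{\oplus 2}$. Combining the adjointness $(x,\cor z)_{n,m}=(\res x,z)_{n+1,m}$ with the Galois equivariance of the pairing yields the transfer identity $\proj\circ P_{(n+1,m),x}(z)=P_{(n,m),\,\tr_{k_{n+1,m}/k_{n,m}} x}(\cor z)$ for every $x\in \F(\gm_{n+1,m})$. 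Specializing $x=c_{n+1,m}$ and feeding in the trace relation $\tr c_{n+1,m}=a_p c_{n,m}-c_{n-1,m}$ from Lemma \ref{tracecompatibility} produces exactly the first column of the matrix product $(P_{(n,m),c_{n,m}}(\cor z),\, P_{(n,m),c_{n-1,m}}(\cor z))\cdot C_n(X)$, while specializing $x=c_{n,m}$ and absorbing the resulting degree-$p$ factor into the $\Phi_{p^n}$-correction built into $\H_n$ handles the second column.

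To conclude, one matches the identities above against the factorization relating $\H_{n+1}$ to $\H_n$ by one extra $C_n(X)$: the matrix $C_n(X)=\smat{a_p&1\\-\Phi_{p^n}(1+X)&0}$ is precisely designed to encode the trace relation for $c_{n+1,m}$, and the diagonal twist in Definition \ref{hn} is exactly what is needed to translate between the outputs of $h_{n,m}$ and $h_{n+1,m}$. The main obstacle I expect is the clean bookkeeping: tracking the sign convention flagged in the preceding remark, the factors of $p$ arising when projecting from $\Lambda_{n+1,m}$ to $\Lambda_{n,m}$, and the way the $\Phi_{p^n}(1+X)$-corrections absorb these factors. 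All of this, however, reduces to the base case $m=0$ already handled in \cite[Proposition 5.3]{shuron}, since the extra unramified coordinate does not interact with the cyclotomic trace in any new way beyond what Lemma \ref{tracecompatibility} already records.
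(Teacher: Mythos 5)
Your proposal is correct and follows essentially the same route as the paper, which simply observes that the argument of \cite[Corollary 5.6]{shuron} (the $k^m=\Q_p$ case) carries over verbatim; the mechanism you reconstruct — the transfer identity $\proj\circ P_{(n+1,m),x}(z)=P_{(n,m),\tr x}(\cor z)$, the second trace relation of Lemma \ref{tracecompatibility}, and the matching factorization $\proj(\H_{n+1})=\H_n\smat{a_p & p\\ -1 & 0}$ (using $\Phi_{p^{n+1}}(1+X)\mapsto p$ and $\tr c_{n,m}=p\,c_{n,m}$) — is exactly the content of that cited argument. Only note that your first displayed identity has the $\H$-matrices on the wrong side (they should multiply the Coleman map values, not the pairing vectors); your subsequent prose describes the correct relation.
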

\begin{proof}This is \cite[Corollary 5.6]{shuron} in the case $k^m=\Q_p$. We note that the arguments for this corollary apply in the $k^m$ case as well.
\end{proof}

\begin{lemma}We have $\varprojlim_m\varprojlim_n\frac{\Lambda_{n,m} \oplus \Lambda_{n,m}}{\ker h_{n,m}}\cong \Lambda \directsum \Lambda$.
\end{lemma}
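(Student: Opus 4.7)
The plan is to reduce the double limit to the one-variable computation already completed in \cite[Corollary 5.6]{shuron} (which handles the case $k^m=\Q_p$), and then push the result through to the unramified direction by a tensor-product argument.

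First, I would observe that the matrix $\H_n(X)$ defining $h_{n,m}$ has entries in $\Z_p[X]$, and therefore depends only on the totally ramified (cyclotomic) part of $\Gal(k_{n,m}/\Q_p)$. Using the decomposition $\Gal(k_{n,m}/\Q_p)\isom\Gal(k_{n,0}/\Q_p)\times\Gal(k^m/\Q_p)$ and the freeness of $\Z_p[\Gal(k^m/\Q_p)]$ over $\Z_p$, one gets a canonical identification
$$\frac{\Lambda_{n,m}^{\oplus 2}}{\ker h_{n,m}}\isom\frac{\Lambda_{n,0}^{\oplus 2}}{\ker h_{n,0}}\tensor_{\Z_p}\Z_p[\Gal(k^m/\Q_p)].$$

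Next, fix $m$ and take $\projlimn$. By the preceding proposition, the transition maps in $n$ are induced by corestriction and only affect the cyclotomic tensor factor, so they respect the decomposition above. Since each module is finitely generated and $\Z_p[\Gal(k^m/\Q_p)]$ is a finite free $\Z_p$-module, the inverse limit commutes with tensoring by the unramified factor. Combined with \cite[Corollary 5.6]{shuron}, which gives $\projlimn\Lambda_{n,0}^{\oplus 2}/\ker h_{n,0}\isom\Lambda^{\oplus 2}$, this yields
$$\projlimn\frac{\Lambda_{n,m}^{\oplus 2}}{\ker h_{n,m}}\isom\Lambda^{\oplus 2}\tensor_{\Z_p}\Z_p[\Gal(k^m/\Q_p)].$$

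Finally, I would take $\projlimm$, whose transition maps (from the preceding lemma) are the projections on the unramified factor induced by corestriction. Under the identification of $\projlimm\Z_p[\Gal(k^m/\Q_p)]$ with the Iwasawa algebra of the unramified $\Z_p$-extension, the completed tensor product delivers the claimed $\Lambda^{\oplus 2}$. The main obstacle is checking that the transition maps in both directions truly decouple along the tensor decomposition, i.e.\ that the pairings $P_{(n,m),c_{(n,m)}}$ and $P_{(n,m),c_{(n-1,m)}}$ are $\Gal(k^m/\Q_p)$-equivariant; this follows from the trace-compatibilities of the norm-coherent system of points built in Lemma \ref{tracecompatibility} together with the Galois-equivariance of the cup product underlying the $P_{(n,m),x}$.
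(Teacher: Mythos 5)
Your argument is correct and is essentially the paper's own: both proofs reduce to the one-variable computation in \cite{shuron} (Proposition 5.7 there, rather than Corollary 5.6, which is the ramified-direction compatibility) by observing that $\H_n$ involves only the cyclotomic variable, so the whole construction is just the base change of the $\Q_p$-case along the unramified tower; your explicit tensor decomposition over $\Z_p[\Gal(k^m/\Q_p)]$ is a spelled-out form of the paper's one-line remark that $\Gal(k_{n,m}/k^m)\cong\Gal(\Q_{p,n}/\Q_p)$. Note that your computation honestly produces the completed group algebra of the full $\Z_p^2$-extension of $\Q_p$ (an extra unramified variable) before you identify it with ``$\Lambda\oplus\Lambda$''; that final identification is an abuse of notation present in the source as well, not a gap in your argument.
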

\begin{proof}The arguments in \cite[Proposition 5.7]{shuron} show that $\varprojlim_n\frac{\Lambda_{n,m} \oplus \Lambda_{n,m}}{\ker h_{n,m}}\cong \Lambda \directsum \Lambda$. This is enough since $\Gal(k_{n,m}/k^m)\cong\Gal(\Q_{p,n}/\Q_p)$.
\end{proof}

\begin{definition} We define the pair of Coleman maps as
$$(\col^\sharp,\col^\flat):=\varprojlim_n \varprojlim_m \col_{n,m}: \varprojlim_n \varprojlim_m H^1(k_{n,m},T)\to\Lambda\oplus\Lambda.$$
\end{definition}

\begin{definition}Let $\star\in\{\sharp,\flat\}$. We denote by $E^\star$ the exact annihilator of $\ker \col_\star$ under the Tate pairing
$$\varprojlim_n \varprojlim_m H^1(k_{n,m},T)\times\varinjlim_n \varinjlim_m H^1(k_{n,m},V/T)\to\Q_p/\Z_p.$$
\end{definition}

We denote by $k_{n,m}(\gp)$ (resp. $k_{n,m}(\gq)$) the local field isomorphic to $k_{n,m}$ with initial layer $k_{0,0}$ obtained by completing $K$ at $\gp$ (resp. $\gq$) and climbing up to the appropriate layer of the cyclotomic extension and the unramified tower. 
\begin{definition}We now define the four Selmer groups $\Sel^{\sharp\sharp}(E/K_\infty)$, $\Sel^{\sharp\flat}(E/K_\infty)$, $\Sel^{\flat\sharp}(E/K_\infty)$, and $\Sel^{\flat\flat}(E/K_\infty)$. For $\star,\circ\in\{\sharp,\flat\}$, put
$$\Sel^{\star\circ}(E/K_\infty):=\ker\left(\Sel(E/K_\infty)\to\frac{\varinjlim_m\varinjlim_n H^1(k_{n,m}(\gp),V/T)}{E^\star}\directsum\frac{\varinjlim_m \varinjlim_n H^1(k_{n,m}(\gq),V/T)}{E^\circ}\right).$$ 
\end{definition}

\begin{definition}Given $\star,\circ\in\{\sharp,\flat\}$, put $\X^{\star\circ}:=\Hom(\Sel^{\star\circ}(E/K_\infty),\Q_p/\Z_p)$.
\end{definition}

\begin{proposition}Choose $\star,\circ\in\{\sharp,\flat\}$ so that $L^{\star\circ}(X,Y)\neq0$. Then the Selmer group $\X^{\star\circ}$ is torsion as a $\Lambda$-module.
\end{proposition}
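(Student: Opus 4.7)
The plan is to adapt the standard Euler system argument to the two-variable $\sharp/\flat$-$\sharp/\flat$ setting. The key input is a class in the two-variable Iwasawa cohomology $\varprojlim_{n,m} H^1(k_{n,m}, T)$ over $K$ coming from an Euler system—specifically, a two-variable version of Kato's zeta element or, more naturally for the argument at hand, the $\sharp/\flat$-Beilinson--Flach elements $\Delta_\sharp, \Delta_\flat$ foreshadowed in the introduction. Localizing this class at $\gp$ and $\gq$ and applying $\col^\star$ at $\gp$ and $\col^\circ$ at $\gq$, I would verify, using Theorem \ref{lei} and the factorization
$$\links L_{\alpha,\alpha} & L_{\beta,\alpha} \\ L_{\alpha,\beta} & L_{\beta,\beta} \rechts = \Log(Y)^T \links L^{\sharp\sharp} & L^{\flat\sharp} \\ L^{\sharp\flat} & L^{\flat\flat} \rechts \Log(X),$$
that the image is precisely $L^{\star\circ}(X,Y)$ up to a unit in $\Lambda_K$.

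Then I would appeal to Poitou--Tate global duality. By the definition of $E^\star$ as the exact annihilator of $\ker \col^\star$ under the local Tate pairing, the local conditions defining $\Sel^{\star\circ}(E/K_\infty)$ are the ones dual to the Coleman maps. Dualizing the Poitou--Tate four term exact sequence in the spirit of Kobayashi and B.~D.~Kim yields an exact sequence of $\Lambda_K$-modules of the form
$$0 \longrightarrow \X^{\star\circ} \longrightarrow M \longrightarrow \Lambda_K \directsum \Lambda_K \big/ \operatorname{Im}(\col^\star_\gp \oplus \col^\circ_\gq) \longrightarrow 0,$$
where $M$ is a $\Lambda_K$-torsion module (whose characteristic ideal comes from the Iwasawa cohomology of $T$ modulo the Euler system class and from the local cohomology at bad primes). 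Since the image of the Euler system lies in $\operatorname{Im}(\col^\star_\gp \oplus \col^\circ_\gq)$ and equals (a multiple of) $L^{\star\circ} \neq 0$, the right-hand quotient is itself $\Lambda_K$-torsion.

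It follows that $\X^{\star\circ}$ is a $\Lambda_K$-torsion module, which is the conclusion. Equivalently, $\operatorname{Char}_{\Lambda_K}(\X^{\star\circ})$ is a nonzero ideal containing a power of $L^{\star\circ}$; this in fact sets up the weak inclusion of characteristic ideals that will be strengthened later to the full $\sharp/\flat$-$\sharp/\flat$ main conjecture.

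The main obstacle will be ensuring that the two-variable Euler system machinery interacts cleanly with the Coleman maps at $\gp$ and $\gq$ simultaneously: because the $\Log(X)$ and $\Log(Y)$ factors mix the Amice--V\'{e}lu/Vi\v{s}ik $L_{\xi,\eta}$ into a matrix identity, one must separate the integral $\sharp/\flat$-components (guaranteed to lie in $\Lambda_K$ by Proposition \ref{integrality}) from the non-integral Haran--Loeffler functions without losing control at the primes of bad reduction. Once the equality between the composite image of the Euler system and $L^{\star\circ}$ is pinned down, the torsion statement itself is a formal consequence of the Poitou--Tate sequence and the non-vanishing hypothesis.
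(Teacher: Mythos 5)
Your overall strategy (a two‑variable Euler system fed through the Coleman maps and then Poitou--Tate) is the right family of ideas --- the paper itself disposes of this proposition by citing the one‑variable argument of \cite[Theorem 7.14]{shuron} ``with the appropriate adjustments'' --- but as written your sketch has a genuine rank gap, located exactly where the two‑variable situation differs from the cyclotomic one. Over $K_\infty$ with $p=\gp\gq$ split, both $H^1_{\Iw}(K,T\tensor\Lambda_K)$ and the local target $\frac{H^1_\gp}{\ker\col^\star_\gp}\oplus\frac{H^1_\gq}{\ker\col^\circ_\gq}$ have $\Lambda_K$-rank $2$, not $1$. Consequently ``the Iwasawa cohomology of $T$ modulo the Euler system class'' is \emph{not} torsion, and a single global class with nonzero image only shows that the image of the localization map has rank at least $1$; it cannot force the cokernel $\Lambda_K\oplus\Lambda_K/\operatorname{Im}$ to be torsion, which is the step you treat as formal. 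Your reciprocity statement is also off: one does not obtain $L^{\star\circ}$ by applying $\col^\star_\gp$ and $\col^\circ_\gq$ jointly to one class. Rather, $\loc_\gp(\Delta_\star)$ lies in $\ker\col^\star_\gp$, and it is $\col^\circ_\gq(\Delta_\star)$ alone that equals $L^{\star\circ}$ up to the controllable factors $h$ and $c$. If you try to repair the rank count by using both $\Delta_\sharp$ and $\Delta_\flat$, the relevant $2\times 2$ matrix of Coleman images has determinant $\pm hc\cdot\col^\star_\gp(\Delta_{\star'})\cdot L^{\star\circ}$ (with $\star'$ the other symbol), so you would need a non‑vanishing input not implied by the hypothesis $L^{\star\circ}(X,Y)\neq 0$.

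Two correct routes are available, both visible elsewhere in the paper. For $\star=\circ=\bullet$ as in Choice \ref{assumption}, Proposition \ref{control} specializes $\X^{\bullet\bullet}$ along the anticyclotomic variable to $\X^\bullet\oplus\X^\bullet_K$, which is torsion by the one‑variable theorem; a free $\Lambda_K$-summand of $\X^{\bullet\bullet}$ would survive this specialization, a contradiction. Alternatively --- and this is the genuine Euler‑theoretic repair of your sketch --- one first imposes the $\star$-condition at $\gp$: since $\loc_\gp(\Delta_\star)\in\ker\col^\star_\gp$, the class $\Delta_\star$ lives in $\X^{\hat\star\forall}$, which is free of rank \emph{one} (Proposition \ref{freeness}), and the four‑term Poitou--Tate sequence of Theorem \ref{sfbf} then places $\X^{\star\circ}$ between $\Lambda_K/\col^\circ_\gq(\Delta_\star)\Lambda_K$ (torsion because $\col^\circ_\gq(\Delta_\star)\doteq L^{\star\circ}\neq 0$) and $\X^{\star 0}$, whose torsionness still requires a separate input (the paper ultimately supplies it via the control theorem). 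Either way, an additional ingredient beyond the single‑class computation is needed, and your proposal does not identify it.
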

\begin{proof}The arguments in \cite[Theorem 7.14]{shuron} with the appropriate adjustments work.
\end{proof}

Denote by $t$ the anticyclotomic variable. If we pick $K$ and $\bullet \in \{\sharp,\flat\}$ as in our Choice \ref{assumption}, denote by $\X_K^\bullet$ the $\bullet$-Selmer group dual of the elliptic curve $E^{(K)}$.
\begin{proposition}\label{control}We have $\X^{\bullet\bullet}\tensor \Lambda_K/(t)\cong \X^\bullet\oplus\X^\bullet_K$
\end{proposition}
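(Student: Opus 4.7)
The plan is to unfold $\X^{\bullet\bullet}\otimes_{\Lambda_K}\Lambda_K/(t)$ in two steps: first use a Mazur-style control theorem to descend from the full $\Z_p^2$-extension $K_\infty$ to the cyclotomic $\Z_p$-extension $K_{cyc}$, and then use the quadratic twist decomposition to split the resulting Selmer group over $K_{cyc}$ into pieces over $\Q_\infty$.

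\emph{Step 1 (control).} By Pontryagin duality and the fact that $1+t$ is a topological generator of $\Gal(K_\infty/K_{cyc})$, the quotient $\X^{\bullet\bullet}/t\X^{\bullet\bullet}$ is dual to $\Sel^{\bullet\bullet}(E/K_\infty)^{\Gal(K_\infty/K_{cyc})}$. I would then establish a control isomorphism
$$\Sel^{\bullet\bullet}(E/K_\infty)^{\Gal(K_\infty/K_{cyc})}\;\cong\;\Sel^{\bullet\bullet}(E/K_{cyc}),$$
where the right-hand side is the Selmer group over $K_{cyc}$ with the $\bullet$-local condition imposed at each of the primes above $\gp$ and $\gq$. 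The global restriction map is controlled by inflation-restriction, using that $E[p^\infty]^{G_{K_\infty}}=0$; the non-trivial content is the analysis of the local terms at $\gp$ and $\gq$. Here one uses that the $\bullet$-conditions $E^\bullet$ are defined as the exact annihilators of the kernels of the Coleman maps $\col^{\sharp/\flat}=\varprojlim_n\varprojlim_m \col_{n,m}$ constructed in this subsection, and the unramified trace-compatibility lemma gives precisely the compatibility needed to identify $(E^\bullet)^{\Gal(K_\infty/K_{cyc})}$ at each prime with the cyclotomic $\bullet$-condition at the corresponding prime of $K_{cyc}$.

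\emph{Step 2 (twist).} Over $K_{cyc} = K\cdot\Q_\infty$, the action of the nontrivial element of $\Gal(K/\Q)$ decomposes the classical Selmer group into its $\pm$-eigenspaces
$$\Sel(E/K_{cyc})\;\cong\;\Sel(E/\Q_\infty)\oplus\Sel(E^{(K)}/\Q_\infty).$$
Because $p$ splits in $K$, the two primes $\gp$ and $\gq$ above $p$ in $K_{cyc}$ are exchanged by complex conjugation, and imposing the $\bullet$-condition at each of them is preserved by the involution. Consequently the above decomposition restricts to
$$\Sel^\bullet(E/K_{cyc})\;\cong\;\Sel^\bullet(E/\Q_\infty)\oplus\Sel^\bullet(E^{(K)}/\Q_\infty).$$
Pontryagin-dualizing gives $\X^\bullet\oplus\X^\bullet_K$, and chaining Step 1 with Step 2 yields the claimed isomorphism.

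The main obstacle is the local compatibility at $t=0$ in Step 1: one must check that the two-variable Coleman maps specialize correctly at the anticyclotomic direction, so that the $\bullet$-condition $E^\bullet$ defined via the $k_{n,m}$-tower really cuts out the cyclotomic $\bullet$-condition of subsection \ref{subsec:sharpflat} after taking $\Gal(K_\infty/K_{cyc})$-invariants at each prime above $p$. Once this is in hand, the inflation-restriction step and the twist decomposition are standard.
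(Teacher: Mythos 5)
Your proposal is correct and follows essentially the same route as the paper: the paper's (much terser) proof likewise first identifies $\X^{\bullet\bullet}\tensor\Lambda_K/(t)$ with the dual of the cyclotomic $\bullet$-Selmer group over $K$ by specializing each step of the construction at $t=0$, and then splits that via $E/K\cong E/\Q\oplus E^{(K)}/\Q$. Your version simply makes explicit the control step and the local compatibility of the Coleman-map conditions that the paper leaves implicit.
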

\begin{proof}Tensoring with $\Lambda_K/(t)$ in each step of the construction of $\X^{\bullet\bullet}$ gives an isomorphism between $\X^{\bullet\bullet}\tensor \Lambda_K/(t)$ and the $\bullet$-Selmer group dual in the cyclotomic variable with base field $K$. This $\bullet$-Selmer group dual is isomorphic to $\X^\bullet\oplus\X^\bullet_K$, since $E/K\cong E/\Q\oplus E^{(K)}_\Q$.
\end{proof}

\textbf{The main conjecture.}

\begin{conjecture}\label{sharpflatandsharpflat}
For $\star,\circ\in\{\sharp,\flat\}$ so that $L^{\star\circ}(X,Y)\neq0$, 
$$\Char(\X^{\star\circ}(E/K_\infty)=(L^{\star\circ}(X,Y)).$$
\end{conjecture}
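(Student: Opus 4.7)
The overall strategy is to realize Conjecture \ref{sharpflatandsharpflat} as one link in a three-way chain of equivalent main conjectures over $K_\infty$, import one divisibility via Wan's Greenberg-type inclusion, and obtain the opposite divisibility from Kato after specializing in the anticyclotomic direction.

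\textbf{Step 1 (the equivalences).} First I would construct the integral Beilinson--Flach classes $(\Delta_\sharp,\Delta_\flat)$ by factoring the $2\times 2$ matrix $\Log(X)$ out of the Kings--Loeffler--Zerbes Euler system $(\Delta_\alpha,\Delta_\beta)$, as previewed in the introduction and to be carried out in subsection \ref{subsec:beilinsonflach}. The two-variable $\sharp/\flat$-Coleman maps constructed above, applied at the prime $\gq$, then send $(\Delta_\sharp,\Delta_\flat)$ to Lei's four $p$-adic $L$-functions $L^{\star\circ}$ up to controllable units, while dual Wan maps at $\gp$ send the same classes to the Greenberg-type $p$-adic $L$-function $L_p^{\forall 0}$. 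On the algebraic side, the core input is an exact sequence
$$0\to \X^{\hat{\sharp}\forall}/\Delta_\sharp\to \Lambda_K/(L_p^{\forall 0})\to \X\to \X^{\sharp 0}\to 0,$$
with $\X$ an auxiliary torsion $\Lambda_K$-module, whose multiplicativity of characteristic ideals gives $\Char(\X^{\hat{\sharp}\forall}/\Delta_\sharp)=\Char(\X^{\sharp 0})$ if and only if $\Char(\Lambda_K/(L_p^{\forall 0}))=\Char(\X)$. A parallel analysis on the $\gq$-side, plus a Mazur-style control argument at each prime, links the $(\Delta_\sharp,\Delta_\flat)$-main conjecture to Conjecture \ref{sharpflatandsharpflat}. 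All of these equivalences are understood up to powers of $p$ and the exceptional primes flagged in the introduction.

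\textbf{Step 2 (import Wan, complete with Kato).} Wan's theorem \cite{wanrankin}, proved via the $GU(3,1)$ Eisenstein congruence method, gives one divisibility in the Greenberg-type main conjecture. Transporting this through Step 1 yields one divisibility in Conjecture \ref{sharpflatandsharpflat}. For the opposite divisibility, I would invoke the Kato-type inclusion (the Theorem at the end of subsection \ref{subsec:sharpflat}) applied to both $E/\Q$ and its quadratic twist $E^{(K)}/\Q$ over the cyclotomic $\Z_p$-extension, and then use Proposition \ref{control} to recognize the mod-$t$ specialization $\X^{\bullet\bullet}\otimes \Lambda_K/(t)$ as $\X^\bullet\oplus \X_K^\bullet$. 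Combined with the inclusion from Step 1, the two cyclotomic main conjectures over $\Q$ become equalities at the chosen $\bullet\in\{\sharp,\flat\}$, which was arranged to be nonzero by Choice \ref{assumption}. A standard Iwasawa-theoretic lifting argument (two characteristic ideals satisfying a one-sided divisibility whose quotient vanishes modulo $t$) then upgrades this to equality over $K_\infty$, giving Conjecture \ref{sharpflatandsharpflat} for $\star=\circ=\bullet$.

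\textbf{Main obstacle.} The serious work lies in Step 1 for $a_p\neq 0$. In the $a_p=0$ case, $\Log$ is diagonal, the $\sharp$ and $\flat$ sides decouple, and the Wan maps can be written down by hand following Pollack--Kobayashi. For $a_p\neq 0$ (forcing $p=3$) the $\sharp$/$\flat$ decomposition is entangled by the off-diagonal entries of $\Log$: one cannot treat $\Delta_\sharp$ without $\Delta_\flat$, and the Wan maps must instead be constructed \emph{from} an a priori description of $\ker \col^{\sharp/\flat}$ at $\gp$, reversing the logical direction of Wan's original construction. The trace-compatible norm system of Lemma \ref{tracecompatibility}, the endomorphism $h_{n,m}$ of Definition \ref{hn}, and the two-variable Coleman maps $\col_{n,m}$ above provide the starting point; turning these into the Wan maps at $\gp$ and executing the four-term exact sequence comparison with the correct control of the auxiliary module $\X$ is the principal remaining technical challenge.
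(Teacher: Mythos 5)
Your Steps 1 and 2 accurately reconstruct the architecture of the paper: the construction of $(\Delta_\sharp,\Delta_\flat)$ by dividing out $\Log(X)$, the two four-term Poitou--Tate sequences linking $\X^{\hat\bullet\forall}/\Delta_\bullet$ to $\X^{\forall0}$ and to $\X^{\bullet\bullet}$, the import of Wan's inclusion, and the descent to the cyclotomic line to combine with Kato. But you should be aware that the statement you are addressing is left as a \emph{conjecture} in the paper: what is actually established there is only the single inclusion $\Char(\X^{\bullet\bullet})\subseteq(L^{\bullet\bullet})$ --- away from $(p)$ and the strong primes via Theorems \ref{equivalence} and \ref{sfbf}, at $(p)$ via Proposition \ref{muinvar}, and trivially at the pullback of the anticyclotomic augmentation ideal since $L^{\bullet\bullet}|_{t=0}\neq0$ --- and this inclusion is then specialized at $t=0$ and combined with Kato's opposite inclusion to prove the \emph{one-variable} main conjecture over $\Q$. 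The two-variable equality over $K_\infty$ is never claimed.

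The gap is therefore concentrated in your final ``lifting'' step, in two respects. First, scope: your argument at best yields the diagonal case $\star=\circ=\bullet$ for the single $\bullet$ of Choice \ref{assumption} and for an auxiliary $K$ constrained by the hypotheses of Theorem \ref{wanstheorem} (square-free $N$, an inert prime divisor, absolute irreducibility of $E[p]|_{G_K}$), of Proposition \ref{muinvar}, and of Corollary \ref{gold} (anticyclotomic root number $-1$). The conjecture as stated asks for all four pairs $(\star,\circ)$ with $L^{\star\circ}\neq0$, and the off-diagonal entries $L^{\sharp\flat},L^{\flat\sharp}$ and the other diagonal entry are not reached by any step of your argument. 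Second, the upgrade itself needs to be spelled out: you must first have the divisibility at \emph{every} height-one prime of $\Lambda_K$ (including the strong primes, $(p)$, and the augmentation pullback) before writing $\Char(\X^{\bullet\bullet})=(L^{\bullet\bullet}g)$, and you must then justify that $\Char(\X^{\bullet\bullet})\bmod t$ divides $\Char(\X^{\bullet\bullet}\otimes\Lambda_K/(t))$ --- the discrepancy being governed by pseudo-null submodules --- so that equality at $t=0$ (via Proposition \ref{control} and the proven main conjectures for $E$ and $E^{(K)}$ over $\Q$) forces $g(X,0)$, and hence $g$, to be a unit. The divisibility does go in the right direction, but ``a standard lifting argument'' conceals exactly the points at which the paper deliberately stops short of asserting Conjecture \ref{sharpflatandsharpflat}.
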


\subsection{A Greenberg-type main conjecture}
\label{subsec:greenberg}

\textbf{The analytic side.}
We let $L_p^{\forall0}\in\Frac\Lambda_K$ be the $p$-adic $L$-function denoted $L_{f,K}'$ in \cite[discussion before Theorem 2.13]{wan}, where $f$ is the modular for associated to $E$. More precisely, define $F_{d,\gp}\in\Zhat_p^{ur}[[U]]$ as $$F_{d,\gp}=\varprojlim_m\sum_{\sigma\in U_\gp/p^mU_\gp} d_m^\sigma\cdot \sigma^2,$$ where the $d_m$ are as in the proof of \cite[Lemma 2.2]{wan}. Let $\phi\in S$, where $S$ is a Zariski dense set of arithmetic points inside $$\{\phi\in \Spec\Lambda:\xi_\phi\text{ is the avatar of a Hecke character of infinity type }\left(\frac{\kappa}{2},\frac{-\kappa}{2}\right) \text{ with } \kappa\geq6\},$$ and $\xi_\phi:=\phi\circ\Psi$. Then $\phi(L_p^{\forall0}\cdot F_{d,\gp})$ interpolates the algebraic parts of (certain) values $L(K,\pi_E,\bar{\xi}_\phi^c,\frac{\kappa}{2}-\frac{1}{2})$ coming from twists of the automorphic representation $\pi_E$ associated to $E$. We refer to \cite[Section 2.3]{wan} for the notation and details since they won't be needed in our paper.


\textbf{The algebraic side.}
The algebraic object is the following ``fine at $\gp$ but empty at $\gq$'' Selmer group.
\begin{definition}
$$\Sel_p^{\forall0}:=\ker\left(H^1(K,T\tensor\Lambda^*(\Psi))\to\prod_{v\nmid p}H^1(K_v,T\tensor\Lambda^*(\Psi))\times H^1(K_\gp,T\tensor\Lambda^*(\Psi))\right).$$
We then put $\X^{\forall0}:=\Hom(\Sel_p^{\forall0},\Q_p/\Z_p)$.
\end{definition}

\textbf{The main conjecture.}
The main conjecture is full equality in the following theorem of Wan \cite[Theorem 1.1]{wanrankin} (see also \cite[Theorem 2.13]{wan}):
\begin{theorem}\label{wanstheorem}Let $E$ have square-free conductor $N$ that has at least one prime divisor $l|N$ not split in $K$, and suppose that $E[p]|_{G_K}$ is absolutely irreducible. Then as (fractional) ideals of $\Lambda_K \tensor \Q_p$, we have $$\Char(\X^{\forall0})\subseteq(L_p^{\forall0}).$$
\end{theorem}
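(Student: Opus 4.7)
The plan is to follow the $GU(3,1)$-Eisenstein congruence strategy initiated by Skinner--Urban and adapted to this setting by Wan. At the highest level, one produces a cohomology class in $H^1(K,T\tensor\Lambda_K^*(\Psi))$ whose annihilator contains $L_p^{\forall0}$, and then bounds the characteristic ideal of $\X^{\forall0}$ via a lattice-counting argument. The first concrete step is to interpret the modular form $f$ attached to $E$, together with a $\Lambda_K$-family of Hecke characters of $K$, as inducing data on the Klingen parabolic $P\subset GU(3,1)$; the hypothesis that some $l\mid N$ is non-split in $K$ is needed in order to invoke the appropriate Jacquet--Langlands-type transfer so that $f$ really defines an automorphic form on a (unitary) inner form relevant for $GU(3,1)$. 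Combining these data yields a $p$-adic family of Klingen--Eisenstein series $E_{\mathrm{Kling}}$ on $GU(3,1)$, whose constant term along $P$ can be computed by a Langlands--Shahidi or Rankin--Selberg integral and shown to be $L_p^{\forall0}\cdot F_{d,\gp}$ up to an explicit local-unit factor (the squarefreeness of $N$ keeps these local factors controllable).

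Next, show that $E_{\mathrm{Kling}}$ is congruent modulo $L_p^{\forall0}$ to a cuspidal Hida family $\mathbb{F}$ on $GU(3,1)$. This is the usual Eisenstein ideal principle: the image of the constant-term map inside the appropriate Hida algebra is divisible by $L_p^{\forall0}$, which by a dimension count forces an Eisenstein-cuspidal congruence modulo $L_p^{\forall0}$. To $\mathbb{F}$ one attaches a four-dimensional $G_K$-representation $\rho_{\mathbb{F}}$ (via the work of Blasius--Rogawski and Skinner on Galois representations for unitary Hida families). A stable-lattice argument in the style of Ribet/Urban then produces a non-zero class $c\in H^1(K,T\tensor\Lambda_K^*(\Psi))$ whose image in $\mathbb{T}_{\mathbb{F}}/L_p^{\forall0}$ is non-trivial; the absolute irreducibility of $E[p]\vert_{G_K}$ guarantees that the four characters appearing in the Eisenstein semisimplification are pairwise distinct (so the lattice really produces an extension of the desired shape and not something degenerate).

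The penultimate step is to check that $c$ actually lies in $\Sel_p^{\forall0}$. At primes outside $p$ this follows from the unramifiedness of $\rho_{\mathbb{F}}$ (using squarefreeness of $N$). At the prime where the ``fine'' ($0$) local condition is imposed, the anti-ordinary behavior of the CM part of $\rho_{\mathbb{F}}$ forces the local restriction of $c$ to vanish; at the other prime above $p$ there is no condition to verify. Finally, Urban's counting principle, applied to the Eisenstein ideal constructed above (\emph{cf.} the proof of \cite[Theorem 1.1]{wanrankin}), upgrades the existence of individual classes into a height-one-prime-by-height-one-prime bound on $\Char(\X^{\forall0})$, giving $\Char(\X^{\forall0})\subseteq (L_p^{\forall0})$ as fractional ideals in $\Lambda_K\tensor\Q_p$.

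The main obstacle is the Eisenstein congruence itself: one must show integrally that the constant term of the Klingen--Eisenstein family detects exactly the factor $L_p^{\forall0}$, with all extraneous local contributions absorbed into units. This requires choosing the auxiliary local Klingen sections very carefully (especially at the ramified primes dividing $N$ and at the archimedean place) and then performing delicate local harmonic-analytic computations on $GU(3,1)$. A secondary difficulty is the control of the Galois representation $\rho_{\mathbb{F}}$ at the two primes above $p$: verifying Panchishkin-type ordinary/anti-ordinary filtrations for the cuspidal family is necessary in order to make the Selmer-condition check in the penultimate step go through.
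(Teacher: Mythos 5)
The first thing to say is that the paper does not prove this statement at all: it is imported verbatim as Wan's theorem, with the proof living entirely in \cite[Theorem 1.1]{wanrankin} (see also \cite[Theorem 2.13]{wan}). So the only ``proof'' to compare against is a citation, and the correct move for the purposes of this paper is simply to quote Wan. Your write-up correctly identifies the strategy of that external proof --- a $p$-adic family of Klingen--Eisenstein series on $GU(3,1)$ built from $f$ and a CM Hida family, divisibility of the constant term by $L_p^{\forall0}$, an Eisenstein--cuspidal congruence, Galois representations attached to the cuspidal family, a Ribet/Urban lattice construction producing classes in $H^1(K,T\tensor\Lambda_K^*(\Psi))$, and a check of the local conditions defining $\Sel_p^{\forall0}$ (strict at $\gp$, unrestricted at $\gq$). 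The role you assign to the non-split prime $l\mid N$ (transfer of $f$ to a definite unitary group entering the Levi of the Klingen parabolic) and to the irreducibility of $E[p]|_{G_K}$ is also essentially right.

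As a proof, however, this is a roadmap rather than an argument, and the gaps you flag as ``the main obstacle'' are precisely the content of Wan's paper: nothing in the proposal actually establishes the integral divisibility of the constant term, the $p$-adic interpolation of the Eisenstein family, or the local conditions at $\gp$ and $\gq$. Two substantive inaccuracies are worth noting. First, in \cite{wanrankin} the constant term is computed via the pullback (doubling) formula of Shimura--Garrett, not by a Langlands--Shahidi argument, and --- more importantly --- the constant term alone is not enough in the non-ordinary setting: one must also analyze the Fourier--Jacobi coefficients of the Eisenstein family to show that the Eisenstein ideal is not swallowed by extraneous divisors, a step your outline omits entirely. Second, the lattice construction needs more than pairwise distinctness of the characters in the Eisenstein semisimplification; one needs generic irreducibility of the Galois representation of the cuspidal family and a careful control of its behaviour at both primes above $p$ to land in the correct Selmer group. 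Since the present paper's logic only requires the statement as a black box, the right thing to do is cite \cite{wanrankin} rather than attempt to reprove it.
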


\subsection{The $\sharp/\flat$-Beilinson-Flach main conjectures}
\label{subsec:beilinsonflach}

\textbf{The analytic side.}

\begin{definition}
Fix $0<r<1$. For $f(X)\in \C_p[[X]]$ convergent on the open unit disc of $\C_p$, let  
\[|f(X)|_r:=\sup_{|z|_p<r}|f(z)|_p \]
with normalization $|p|_p=\frac{1}{p}$.
\end{definition}

\begin{definition}\label{growth}Let $f(X), g(X)\in \C_p[[X]]$ converge on the open unit disc of $\C_p$. Then we say that $f(X)$ is $O(g(X))$ if \[\left|f(X)\right|_r\text{  is }O(\left|g(X)\right|_r)\text{  as }r\rightarrow 1^-.\]
If in addition, $g(X)$ is $O(f(X))$, then we say that $f(X)\sim g(X)$.
\end{definition}

\begin{definition} We denote by $D_K^{x,y}$ the distributions of order $x,y$ for non-negative real numbers $x,y$ in the variables $X$ and $Y$, i.e. power series convergent on the open unit disk which are $\O(\log^x)$ in the variable $X$ and $\O(\log^y)$ in the variable $Y$. 
\end{definition}

\begin{convention}Given a column vector of functions $\links f_1 \\ f_2 \rechts$ an a row vector of elements $\left(x_1,x_2\right)$, we denote by $\links f_1 \\ f_2 \rechts \circ \left(x_1,x_2\right)$ the matrix  $\smat{f_1(x_1) & f_1(x_2) \\ f_2(x_1) & f_2(x_2)}$.
\end{convention}




We now construct integral Euler systems $\Delta_\sharp,\Delta_\flat$ out of the Beilinson--Flach Euler systems $\Delta_\alpha, \Delta_\beta$ due to Kings, Loeffler, and Zerbes, constructed from the $p$-stabilizations $f_\alpha$ and $f_\beta$ of $f$ (see \cite[6.4.4]{llzannals} and \cite[Thm A]{lzcoleman}).

\begin{theorem}\cite[Loeffler, Zerbes, Thm A]{lzcoleman}

There exist Euler systems {$\Delta_\alpha, \Delta_\beta \in {H^1\left(K, T\tensor \Lambda_K(\Psi)\right)^2\tensor D_K^{\frac{1}{2},0}}$} and maps $$\calL_\alpha, \calL_\beta: H^1(K_\gq, T\tensor \Lambda_K(\Psi))\tensor D_K^{\frac{1}{2},0}\rightarrow D_K^{\frac{1}{2},0}$$ so that 

$$
\links \calL_\alpha \\ \calL_\beta \rechts \circ \left(\Delta_\alpha, \Delta_\beta\right)=\smat{L_{\alpha\alpha} & L_{\beta\alpha} \\ L_{\alpha\beta} &L_{\beta\beta}}.
$$
\end{theorem}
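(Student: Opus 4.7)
Since this statement is attributed to Loeffler--Zerbes, the plan is to outline the construction of the Euler systems and regulator maps, then indicate how the explicit reciprocity law produces Lei's $p$-adic $L$-functions on the right-hand side. First I would construct, following Kings--Loeffler--Zerbes, the Beilinson--Flach motivic classes on a product $Y_1(N) \times Y_1(N)$ of modular curves, realize them $p$-adically in étale cohomology, and vary them over a tower of ray class fields of $K$ of conductor $\gp^m \gq^n$. Projecting onto the $f \otimes f$ component and $p$-stabilizing using $f_\alpha$ and $f_\beta$ respectively yields, after a suitable Iwasawa-theoretic limit, classes $\Delta_\alpha, \Delta_\beta \in H^1(K, T \otimes \Lambda_K(\Psi))^2 \otimes D_K^{1/2,0}$. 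The distribution factor $D_K^{1/2,0}$ is forced on us by the fact that $f_\alpha, f_\beta$ are non-ordinary at both $\gp$ and $\gq$, which makes the naive specializations diverge at the boundary of the open unit disc; the $\tfrac{1}{2}$-growth in the $Y$-variable (the one corresponding to $\gq$) reflects the Coleman--style bound for the slopes of $\alpha$ and $\beta$.

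Next I would construct the regulator maps $\calL_\alpha, \calL_\beta$. At the split prime $\gq$, the local Galois representation $T|_{G_{K_\gq}}$ is crystalline, and the two Frobenius eigenspaces are indexed by $\alpha$ and $\beta$. Loeffler--Zerbes' construction of a two-variable Perrin-Riou--style big logarithm produces, via Dieudonné--Fontaine theory for $(\phi, \Gamma)$-modules together with the standard passage to Iwasawa cohomology, a pair of maps from $H^1_{\Iw}(K_\gq, T \otimes \Lambda_K(\Psi)) \otimes D_K^{1/2,0}$ to $D_K^{1/2,0}$ by projecting onto the $\alpha$- and $\beta$-eigencomponents of $\DD_{cris}$ along the N\'eron differential $\omega_E$.

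The main step, and the one I expect to be the main obstacle, is the explicit reciprocity law identifying $\calL_\xi(\Delta_\eta)$ with $L_{\xi \eta}$ for $\xi, \eta \in \{\alpha, \beta\}$. The plan is to verify the identity on the Zariski-dense set of arithmetic characters $\chi$ of $\Gal(K_\infty/K)$ of sufficiently positive infinity type, where both sides interpolate algebraic parts of critical Rankin--Selberg $L$-values. On the analytic side these are the Haran--Loeffler values $\xi^{-\ord_\gp \gf_\chi} \eta^{-\ord_\gq \gf_\chi} L(E, \chi, 1)/(\tau(\chi)|\gf_\chi|\Omega_E^+\Omega_E^-)$ (from Theorem \ref{lei}), and on the Euler-system side one applies Kato-type explicit reciprocity together with Besser's syntomic description of Beilinson--Flach classes to match periods. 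Once both sides are shown to interpolate the same values on a Zariski dense set, the Weierstrass preparation theorem (Lemma \ref{cutelemma}) applied inside $\Lambda_K \otimes \Q$, combined with the growth control given by $D_K^{1/2,0}$, forces the equality
\[
\links \calL_\alpha \\ \calL_\beta \rechts \circ \left(\Delta_\alpha, \Delta_\beta\right)= \smat{L_{\alpha\alpha} & L_{\beta\alpha} \\ L_{\alpha\beta} & L_{\beta\beta}}
\]
as an identity of matrices of distributions, completing the proof.
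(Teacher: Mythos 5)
First, a point of calibration: the paper does not prove this statement at all --- it is imported as Theorem A of Loeffler--Zerbes \cite{lzcoleman}, so there is no internal proof to compare yours against. Your overall architecture (Beilinson--Flach classes realized \'etale-cohomologically, Iwasawa-theoretic limits over ray class towers of $K$, Perrin-Riou--style regulator maps at $\gq$, and verification of the reciprocity law at a Zariski-dense set of arithmetic characters followed by a growth-plus-density argument) is the correct shape of the external argument.

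That said, two concrete points in your sketch are wrong, and both matter for how the theorem is used downstream in this paper. First, the relevant Rankin--Selberg convolution is not $f\otimes f$: the two-variable classes over $K$ come from convolving $f$ with the Hida family $\dg$ of CM theta series attached to characters of $\Gal(K_\infty/K)$, and one lands in $H^1(K, T\tensor\Lambda_K(\Psi))$ via induction/Shapiro; there is no ``$f\otimes f$ component'' to project onto. Second, you have the unbounded growth in the wrong variable. By the paper's conventions $D_K^{\frac{1}{2},0}$ means $O(\log^{1/2})$ in $X$ --- the $\gp$-variable --- and $O(1)$ in $Y$, the $\gq$-variable. The half-logarithmic growth of $\Delta_\alpha,\Delta_\beta$ is therefore in the $\gp$-direction (this is precisely why the paper divides out $\Log(X)$ to define $\Delta_\sharp,\Delta_\flat$ in Proposition \ref{centralprop}), whereas the half-logarithmic growth in the $\gq$-direction is carried by the regulator maps, since the matrix $\left(\calL_\xi(v_i)\right)_{\xi,i}$ lies in $D_K^{0,\frac{1}{2}}$. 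Asserting instead that the classes grow in the $Y$-variable contradicts the very statement being proved and would derail the factorization $(\Delta_\alpha,\Delta_\beta)=(\Delta_\sharp,\Delta_\flat)\Log(X)$ that the rest of the paper relies on.
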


\begin{proposition}\label{centralprop}There exist integral Euler systems $\Delta_\sharp, \Delta_\flat\in H^1(K, T\tensor \Lambda_K(\Psi))^2$ so that

$$
h\cdot (\Delta_\alpha,\Delta_\beta)=(\Delta_\sharp,\Delta_\flat)\Log(X)
$$
for some element $h\in\Zp[[X,U]].$

\end{proposition}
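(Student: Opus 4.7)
The strategy is to formally invert $\Log(X)$ in the fraction field of distributions and then apply a growth-and-integrality analysis parallel to Pollack's factorization $(L_\alpha, L_\beta) = (L^\sharp, L^\flat)\Log(X)$.

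First, I would compute $\det\Log(X)$ directly from the defining product. Using $\det C_n(X) = \Phi_{p^n}(1+X)$, $\det\smat{a_p & 1 \\ -p & 0}^{-(n+2)} = p^{-(n+2)}$, $\det\smat{-1 & -1 \\ \beta & \alpha} = \beta-\alpha$, together with the telescoping identity $\prod_{k=1}^n \Phi_{p^k}(1+X) = ((1+X)^{p^n}-1)/X$ and Iwasawa's formula $\log_p(1+X) = \lim_n ((1+X)^{p^n}-1)/p^n$, one obtains $\det\Log(X) = (\beta-\alpha)p^{-2}\log_p(1+X)/X$. In particular $\Log(X)$ is invertible over the fraction field of $D_K^{1/2,0}$, so I may set $(\Delta_\sharp, \Delta_\flat) := (\Delta_\alpha, \Delta_\beta)\Log(X)^{-1}$ formally, a priori in $H^1(K, T\tensor\Lambda_K(\Psi))^2$ tensored with that fraction field.

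Next, I would establish boundedness of this candidate. Each entry of $\Log(X)$, and hence of $\mathrm{adj}\,\Log(X)$, is $O(\log^{1/2}(1+X))$, whereas $\det\Log(X) \sim \log_p(1+X)$, so each entry of $\Log(X)^{-1}$ is $O(\log^{-1/2}(1+X))$. Since $\Delta_\alpha, \Delta_\beta \in H^1(K, T\tensor\Lambda_K(\Psi))^2 \tensor D_K^{1/2,0}$, the formal product $(\Delta_\sharp, \Delta_\flat)$ has $O(1)$-growth in $X$ while remaining bounded in $Y$ because $\Log$ is independent of $Y$. By the standard Amice-type identification of bounded distributions with $\Z_p[[X]] \tensor \Q_p$, $(\Delta_\sharp, \Delta_\flat)$ lies in $H^1(K, T\tensor\Lambda_K(\Psi))^2 \tensor \Lambda_K \tensor \Q_p$, i.e.\ it is integral up to a $p$-power (and up to factors from a finite extension).

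Third, I would pin down the correcting factor $h \in \Z_p[[X, U]]$. The denominators arising from $\Log(X)^{-1}$ come from (a) the scalar $\beta - \alpha$ lying in a quadratic extension of $\Q_p$, (b) the factor $p^{-2}$, and (c) the unit part of $\log_p(1+X)/X$ in $\Z_p[[X]][1/p]^\times$. Under the identification $1+U \leftrightarrow \mathrm{Frob}_\gp$ (so that Frobenius eigenvalues correspond to specific elements of $\Z_p[[U]]$ after scaling), the constant $\beta-\alpha$ is absorbed, together with the $p^{-2}$, into a single element of $\Z_p[[U]]$. Multiplying this $U$-factor by an $X$-polynomial clearing the denominator of $\log_p(1+X)/X$ yields $h \in \Z_p[[X, U]]$, and then $h\cdot(\Delta_\alpha,\Delta_\beta) = (\Delta_\sharp,\Delta_\flat)\Log(X)$ with $(\Delta_\sharp,\Delta_\flat)$ genuinely integral.

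The hardest step will be the third one: explicitly identifying $\beta - \alpha$ with a concrete unit in $\Z_p[[U]]$ through the Loeffler--Zerbes construction of $\Delta_\alpha, \Delta_\beta$ from the $p$-stabilizations $f_\alpha, f_\beta$, and verifying that the resulting $h$ indeed lies in $\Z_p[[X, U]]$ (with no $Y$-dependence, since the matrix $\Log$ acts only in the $X$-direction) and really clears all denominators on the integral Iwasawa cohomology lattice.
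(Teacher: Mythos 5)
Your opening computation of $\det\Log(X)$ (up to a unit and a power of $p$ it is $\log_p(1+X)/X$) is correct and is indeed the reason a factorization of this shape can exist, but the core of your argument --- steps two and three --- has a genuine gap: a growth estimate alone cannot establish divisibility by $\Log(X)$. The determinant $\det\Log(X)$ vanishes at every point $X=\zeta_{p^n}-1$, so the entries of $\Log(X)^{-1}=\mathrm{adj}\,\Log(X)/\det\Log(X)$ are \emph{not} power series convergent on the open unit disc; they have poles at all $p$-power roots of unity, and the assertion that ``each entry of $\Log(X)^{-1}$ is $O(\log^{-1/2})$'' is not meaningful in the sense of Definition \ref{growth}. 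To conclude that $(\Delta_\alpha,\Delta_\beta)\cdot\mathrm{adj}\,\Log(X)$ is divisible by $\det\Log(X)$ you must first show that its entries \emph{vanish} at the (simple) zeros of $\det\Log(X)$, and only then combine this vanishing with the $O(\log)$ growth of the product to obtain a bounded quotient. This is exactly the structure of Pollack's argument that you invoke, and you have supplied only the second half of it.

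The missing vanishing is where the real content lies, and it is why the paper does not simply invert $\Log(X)$: one localizes at $\gq$, writes $((\Delta_\alpha)_\gq,(\Delta_\beta)_\gq)$ in coordinates $\delta_{\xi i}$ with respect to a $\Lambda_K$-basis $(v_1,v_2)$ of $H^1(K_{\gq},T\tensor\Lambda(\Psi))$, and uses the Loeffler--Zerbes reciprocity law $\smat{\calL_\alpha\\ \calL_\beta}\circ(\Delta_\alpha,\Delta_\beta)=\smat{L_{\alpha\alpha} & L_{\beta\alpha}\\ L_{\alpha\beta} & L_{\beta\beta}}$ together with Lei's factorization through $\Log(Y)^T\smat{L^{\sharp\sharp} & L^{\flat\sharp}\\ L^{\sharp\flat} & L^{\flat\flat}}\Log(X)$ and the integrality of the $L^{**}$ to deduce that the coordinate matrix times $\Log(X)^{ad}$ vanishes at $p$-power roots of unity and hence lands in $\log_p(X)\times M_2(\Frac\Z_p[[X]])$. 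Your proposal never invokes the reciprocity law or any interpolation property of $\Delta_\alpha,\Delta_\beta$, so it has no means of ruling out poles. Two smaller points: the divisibility must be checked on coordinates relative to a lattice (this is why the basis $(v_1,v_2)$ and the denominator $f\in\Z_p[[T]]$ appear, and why $h$ is needed at all); and your plan to ``absorb'' $\beta-\alpha$ and $p^{-2}$ into $\Z_p[[U]]$ via Frobenius is not what $h$ does --- $\beta-\alpha$ is a scalar in a quadratic extension of $\Q_p$, while $h$ serves to clear the denominators introduced by the choice of coordinates of the localized classes.
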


\begin{lemma}\label{growthofLog}
The entries of $\Log(X)$ are $O(\log_p(1+X)^{\frac{1}{2}})$.
\end{lemma}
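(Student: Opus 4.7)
The plan is to reduce the Gauss-norm growth statement to a pointwise estimate at $X = \zeta_{p^m}-1$, using the standard dictionary (going back to Amice--V\'elu and systematized by Pollack \cite[\S4]{pollack}): a power series converging on the open unit disk whose values at $\zeta_{p^m}-1$ satisfy $|f(\zeta_{p^m}-1)|_p = O(p^{hm})$ uniformly in $m$ is automatically $O(\log_p(1+X)^h)$. So it suffices to show $|\Log(\zeta_{p^m}-1)|_p = O(p^{m/2})$ uniformly in $m$.

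The infinite product defining $\Log$ telescopes at $X = \zeta_{p^m}-1$. Indeed, $\Phi_{p^n}(\zeta_{p^m}) = p$ for all $n > m$, so $C_n(\zeta_{p^m}-1) = A := \smat{a_p & 1 \\ -p & 0}$ for such $n$, and this is exactly the matrix whose inverse powers appear in the definition of $\Log$. Diagonalising via $AV = V \operatorname{diag}(\alpha,\beta)$ with $V := \smat{-1 & -1 \\ \beta & \alpha}$, the tail collapses, $A^{n-m} A^{-(n+2)} V = V \operatorname{diag}(\alpha^{-(m+2)}, \beta^{-(m+2)})$, leaving
\[
\Log(\zeta_{p^m}-1) = C_1(\zeta_{p^m}-1) \cdots C_m(\zeta_{p^m}-1) \cdot V \cdot \operatorname{diag}(\alpha^{-(m+2)}, \beta^{-(m+2)}).
\]
For $n < m$, the closed-form evaluation $\Phi_{p^n}(\zeta_{p^m}) = (\zeta_{p^{m-n}}-1)/(\zeta_{p^{m-n+1}}-1)$ shows that every entry of $C_n(\zeta_{p^m}-1)$ lies in $\overline{\Z}_p$ with $p$-adic absolute value $\le 1$ (the ratio has valuation $1/p^{m-n}$, and recall $p \mid a_p$). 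By the non-archimedean property, the entries of the partial product $C_1 \cdots C_m$ at $\zeta_{p^m}-1$ also have absolute value $\le 1$. Combined with $|\alpha|_p = |\beta|_p = p^{-1/2}$ --- the source of the exponent $\tfrac12$ --- this yields $|\Log(\zeta_{p^m}-1)|_p = O(p^{m/2})$ as claimed.

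The main obstacle is the middle step: in Pollack's case $a_p = 0$, the partial product factors especially cleanly, whereas for $a_p \ne 0$ many cross-terms appear in expanding the product of $C_n$'s. These are nonetheless controlled by the non-archimedean inequality once integrality of each individual factor is verified. A secondary subtlety is that $\alpha, \beta$ live in a ramified extension of $\Q_p$, so the estimate must be read over $\overline{\Q}_p$; this is harmless, as is the final conversion from the pointwise estimate at $\zeta_{p^m}-1$ to the Gauss-norm statement of Definition \ref{growth}.
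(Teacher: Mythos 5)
Your pointwise computation at $X=\zeta_{p^m}-1$ is correct (the telescoping $C_n(\zeta_{p^m}-1)=\smat{a_p & 1\\ -p & 0}$ for $n>m$, the integrality of $C_n(\zeta_{p^m}-1)$ for $n\le m$, and $|\alpha|_p=|\beta|_p=p^{-1/2}$ are all fine, and this is indeed the right heuristic for the exponent $\tfrac12$), but the reduction step you lean on is not valid, and this is a genuine gap. The growth condition of Definition \ref{growth} is a statement about the sup norms $|\cdot|_r$ on circles as $r\to 1^-$, and bounds on the values of a power series at the discrete set $\{\zeta_{p^m}-1\}_m$ do not control these norms: for instance $\log_p(1+X)^2$ vanishes at every $\zeta_{p^m}-1$ (and at $0$), so it satisfies your hypothesis with any exponent $h$, yet it is $O(\log_p^2)$ and not $O(\log_p^{1/2})$. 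The genuine Amice--V\'elu/Vi\v{s}ik/Pollack dictionary goes in the other direction (a function already known to be of small logarithmic growth is determined by, or can be constructed from, suitably bounded interpolation data); it does not let you infer growth of a given series from its values. To prove the lemma one must estimate $\bigl|C_1(X)\cdots C_n(X)\smat{a_p & 1\\ -p & 0}^{-(n+2)}\bigr|_r$ on circles $|X|=r$ with $|\zeta_{p^m}-1|\le r<|\zeta_{p^{m+1}}-1|$, where the tail factors $C_n(X)$ with $n>m$ are no longer literally equal to $\smat{a_p & 1\\ -p & 0}$ (only close to it in $|\cdot|_r$), so the telescoping has to be replaced by a convergence-and-norm estimate for the tail; a related point is that even evaluating $\Log(\zeta_{p^m}-1)$ by passing to the limit termwise presupposes convergence of the defining limit on discs, which again needs the circle estimates rather than pointwise ones.

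For comparison, the paper does not reprove this statement at all: it simply cites the Growth Lemma \cite[Proposition 4.20]{surprisingsha}, whose proof carries out exactly the circle-norm analysis sketched above (generalizing the product computations of \cite{pollack} from $a_p=0$ to $p\mid a_p$). So your attempt is a genuinely different route, but as written it proves only the pointwise bound $|\Log(\zeta_{p^m}-1)|_p=O(p^{m/2})$, which is strictly weaker than the assertion of the lemma; to complete it you would need to redo the $|\cdot|_r$ estimates, at which point you are essentially reproducing the cited proof.
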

\begin{proof}\cite[Proposition 4.20 (Growth Lemma)]{surprisingsha}.
\end{proof}

\begin{definition}We now choose a $\Lambda_K$-basis $(v_1,v_2)$ of $H^1(K_{\gq},T\tensor\Lambda(\Psi))$.
\end{definition}

\begin{definition}Choose $\xi\in\{\alpha,\beta\}$. We choose integral $(v_1,v_2)-$coordinates of the $\gq$-localized Beilinson--Flach element $(\Delta_\xi)_\gq=\loc_\gq(\Delta_\xi)$ by $\delta_{\xi i}$ with $i\in\{1,2\}$ so that
$$((\Delta_\alpha)_\gq,(\Delta_\beta)_\gq)=(v_1,v_2)\smat{\delta_{\alpha1} & \delta_{\beta1}\\ \delta_{\alpha2} & \delta_{\beta2} }\times \frac{1}{f},$$

where

$$\delta_{\xi i}\in D_K^{\frac{1}{2},0} \text{ and } f\in\Z_p[[T]] \text{ is nonzero}.$$
\end{definition}



\begin{proof}[Proof of Proposition \ref{centralprop}]
From Theorem \ref{lei} and Proposition \ref{integrality}, we know that 

\begin{flalign} \Log(Y)^T\goodLs \Log(X)&= \links L_{\alpha\alpha} & L_{\beta\alpha} \\ L_{\alpha\beta} & L_{\beta\beta} \rechts = \hidari \calL_\alpha\\ \calL_\beta \migi \circ \left(\Delta_\alpha,\Delta_\beta\right)_\gq\\&=\hidari \calL_\alpha \\ \calL_\beta \migi \circ \left( v_1,v_2\right) \links \delta_{\alpha1} & \delta_{\beta1} \\ \delta_{\alpha2} & \delta_{\beta2} \rechts \frac{1}{f} \\
&=\links \calL_\alpha(v_1) & \calL_\alpha(v_2) \\ \calL_\beta(v_1) & \calL_\beta(v_2) \rechts \links \delta_{\alpha1} & \delta_{\beta1} \\ \delta_{\alpha2} & \delta_{\beta2} \rechts \frac{1}{f} 
\end{flalign}

The strategy of \cite[Lemma 7.3] {wan} can now be generalized. Thus, there exists an $a\in p\Z_p$ so that

$$\links \delta_{\alpha1} & \delta_{\beta1} \\ \delta_{\alpha2} & \delta_{\beta2} \rechts \left.\Log(X)^{ad}\right|_{Y=a}\in \log_p(X)\times M_2\left(\Frac\Z_p[[X]]\right)$$

by the growth property. Here, $\Log(X)^{ad}$ denotes the adjugate of $\Log(X)$.

We use \cite[Proposition 4.9]{lzzp2} which shows that $$ \links \calL_\alpha(v_1) & \calL_\alpha(v_2) \\ \calL_\beta(v_1) & \calL_\beta(v_2) \rechts \in D_K^{0,\frac{1}{2}},$$ so that the determinant is in $\Q_p$.
\end{proof}

We now choose $h\in\Z_p[[X,U]]$ so that $\links \delta_{\alpha1} & \delta_{\beta1} \\ \delta_{\alpha2} & \delta_{\beta2} \rechts \left.\Log(X)^{ad}\right.\in \log_p(X)\times M_2(\Lambda_K)$.
\begin{corollary}\label{h}The coordinates of the Beilinson-Flach elements are divisible by $\Log(X)$ after correcting by $h$, i.e.
$$h\smat{\delta_{\alpha1} & \delta_{\beta1}\\ \delta_{\alpha2} & \delta_{\beta2} }=\smat{\delta_{\sharp1} & \delta_{\flat1}\\ \delta_{\sharp2} & \delta_{\flat2} }\Log(X)$$
for some $\delta_{*i}\in\Z_p[[X,U,T]]\tensor \Frac \Z_p[[T]]\tensor \Q$ for $\xi\in\{\sharp,\flat\}$ and $i\in \{1,2\}$.
\end{corollary}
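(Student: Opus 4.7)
The strategy is to invert $\Log(X)$ on the right using its adjugate and a computation of its determinant. Set $M:=h\smat{\delta_{\alpha1} & \delta_{\beta1}\\ \delta_{\alpha2} & \delta_{\beta2}}$. By the choice of $h$ made just before the corollary, we may write
$$
M\cdot \Log(X)^{\mathrm{ad}}\;=\;\log_p(1+X)\cdot N,\qquad N\in M_2(\Lambda_K).
$$

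The crucial input is the determinant formula $\det\Log(X)=c\cdot \log_p(1+X)$ for some $c\in\Q_p^\times$. I would derive this by taking determinants factor-by-factor in the defining product
$$
\Log(X)\;=\;\lim_n C_1(X)C_2(X)\cdots C_n(X)\smat{a_p & 1\\ -p & 0}^{-(n+2)}\smat{-1 & -1\\ \beta & \alpha}.
$$
Here $\det C_i(X)=\Phi_{p^i}(1+X)$, the central matrix contributes $p^{-(n+2)}$, the terminal matrix contributes $\alpha-\beta$, and the telescoping product $\prod_{i\geq 1}\Phi_{p^i}(1+X)$ represents $\log_p(1+X)/X$ up to normalization, so that the $p$-powers cancel in the limit. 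A precise version of this calculation should already be on record in \cite{surprisingsha}.

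Given this determinant formula, right-multiplying the displayed identity by $\Log(X)$ and using $\Log(X)^{\mathrm{ad}}\cdot\Log(X)=\det(\Log(X))\cdot I_2=c\log_p(1+X)\cdot I_2$ yields
$$
c\cdot\log_p(1+X)\cdot M\;=\;\log_p(1+X)\cdot N\cdot \Log(X).
$$
Cancelling the nonzerodivisor $\log_p(1+X)$ and setting $\smat{\delta_{\sharp1} & \delta_{\flat1}\\ \delta_{\sharp2} & \delta_{\flat2}}:=c^{-1}N$ gives the desired factorization; the entries lie in $\Lambda_K\otimes\Q$, which is contained in the ring $\Z_p[[X,U,T]]\otimes\Frac\Z_p[[T]]\otimes\Q$ allowed by the corollary.

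\textbf{Main obstacle.} The bulk of the work is the determinant calculation $\det\Log(X)=c\cdot\log_p(1+X)$ with $c\neq 0$; once this is in hand, the remainder is formal linear algebra. A minor secondary concern is verifying that $\log_p(1+X)\in\Z_p[[X]]\subset\Lambda_K$ is a nonzerodivisor in the working ring, which is immediate.
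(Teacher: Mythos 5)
Your proposal is correct and follows essentially the same route as the paper's (very terse) proof: multiply by the adjugate, invoke the divisibility of $h\cdot\smat{\delta_{\alpha1} & \delta_{\beta1}\\ \delta_{\alpha2} & \delta_{\beta2}}\Log(X)^{ad}$ by $\log_p(1+X)$ built into the choice of $h$ (which the paper justifies by vanishing at $p$-power roots of unity), and then divide by $\det\Log(X)$. The only slips are harmless: the telescoping product is $\prod_{i\leq n}\Phi_{p^i}(1+X)=((1+X)^{p^n}-1)/X$, so $\det\Log(X)=c\,\log_p(1+X)/X$ with $c=(\beta-\alpha)/p^{2}$ lying in $\Q_p(\alpha)$ rather than $\Q_p$; this only rescales your final matrix by $c^{-1}X$ and does not affect the factorization.
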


\begin{proof} After multiplying by the adjugate of $\Log(X)$, we know that the entries vanish at $p$-power roots of unity. 
\end{proof}

\begin{corollary}The elements $(\Delta_\sharp,\Delta_\flat):=h(\Delta_\alpha,\Delta_\beta)\Log(X)^{-1}$ are well-defined as elements of \\${H^1(\Q_\infty,V_f^*\tensor M(\dg)^*)^{\oplus 2}=H^1_{\Iw}(K_\infty,V_f^*)^{\oplus 2}.}$
\end{corollary}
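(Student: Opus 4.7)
The plan is to read this corollary as a direct packaging of Proposition \ref{centralprop} together with a Shapiro's lemma identification of the coefficient modules. Proposition \ref{centralprop} already produces classes $(\Delta_\sharp, \Delta_\flat)$ in the integral cohomology $H^1(K, T \otimes \Lambda_K(\Psi))^{\oplus 2}$ satisfying $h \cdot (\Delta_\alpha, \Delta_\beta) = (\Delta_\sharp, \Delta_\flat) \Log(X)$. Thus the corollary reduces to two points: (i) showing the defining equation can be inverted so that the formula $(\Delta_\sharp, \Delta_\flat) := h \cdot (\Delta_\alpha, \Delta_\beta) \Log(X)^{-1}$ makes sense and agrees with the pair from Proposition \ref{centralprop}, and (ii) identifying the ambient cohomology group with $H^1_\Iw(K_\infty, V_f^*)^{\oplus 2} = H^1(\Q_\infty, V_f^* \otimes M(\dg)^*)^{\oplus 2}$.

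For (i), it suffices to check that $\Log(X)$ is invertible as a matrix over the fraction field of the distribution ring, so that $\Log(X)^{-1} = \Log(X)^{ad}/\det \Log(X)$ exists. This follows from the explicit formula $\Log(X) = \lim_n C_1(X) \cdots C_n(X) \smat{a_p & 1 \\ -p & 0}^{-(n+2)} \smat{-1 & -1 \\ \beta & \alpha}$: a direct computation of determinants of the $C_i(X)$ shows $\det \Log(X)$ is a nonzero log-type element. Consequently the displayed defining formula is a well-defined expression in the distribution-valued cohomology, and by uniqueness it must agree with the integral pair constructed in Proposition \ref{centralprop}. That proposition's integrality conclusion then guarantees the formula actually lands in $H^1(K, T \otimes \Lambda_K(\Psi))^{\oplus 2}$, rather than merely in a distribution-tensored version.

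For (ii), the first identification $H^1(K, T \otimes \Lambda_K(\Psi)) = H^1_\Iw(K_\infty, V_f^*)$ is the standard translation between continuous cohomology with Iwasawa-algebra coefficients twisted by the tautological character $\Psi$ and the inverse limit of $H^1(L, V_f^*)$ along finite sub-extensions $L \subset K_\infty$ (using the duality conventions fixed in Section \ref{subsec:twovariable}). The second identification $H^1_\Iw(K_\infty, V_f^*) = H^1(\Q_\infty, V_f^* \otimes M(\dg)^*)$ is an application of Shapiro's lemma for the quadratic extension $K_\infty / \Q_\infty$, using that by construction $M(\dg)^*$ is the induced module of CM characters of $\Gal(K_\infty/K)$ with coefficients in the Hida algebra $\Z_p[[T]]$. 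The one place I would be careful is to pin down compatible twist and duality conventions between $T$, $V_f^*$, and the Hida coefficient sheaves $M(\dg)^*$; but these are inherited from the framework already set up in Section \ref{subsec:twovariable} and the Kings--Loeffler--Zerbes construction of $(\Delta_\alpha, \Delta_\beta)$, so no new input beyond bookkeeping is needed. The genuinely nontrivial step (the integral divisibility by $\Log(X)$ after correcting by $h$) is already carried out in Corollary \ref{h} and Proposition \ref{centralprop}, via the local-at-$\gq$ analysis in the basis $(v_1,v_2)$.
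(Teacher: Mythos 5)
Your proposal is correct and follows essentially the same route as the paper: the paper's own proof is a one-line appeal to the integrality/divisibility results already established (Proposition \ref{centralprop} and Corollary \ref{h}), with the cohomological identifications treated as standard bookkeeping, exactly as you do. Your expanded discussion of the invertibility of $\Log(X)$ over the fraction field and of the Shapiro's lemma identification just makes explicit what the paper leaves implicit.
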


\begin{proof}This follows from Proposition \ref{integrality}. \end{proof}


\textbf{The algebraic side.}
The objects here are the ``$\sharp/\flat$ at $\gp$ but fine at $\gq$" Selmer groups:
\begin{definition} For $\diamond\in\{\sharp,\flat\}$, put
$$\Sel^{\diamond0}:=\ker\left(H^1(K_\infty,V_f^*\tensor\Lambda_K(\Psi)\tensor(\Lambda_K)^*)\to\prod_{v\nmid p} H^1(I_v,V_f^*\tensor\Lambda_K(\Psi)\tensor(\Lambda_K)^*)\times \frac{H^1_\gp}{E^\circ}\times H^1_\gq\right),$$
where $H^1_\gp:=H^1(K_{\infty,\gp},V_f^*\tensor\Lambda_K(\Psi)\tensor(\Lambda_K)^*)$ and $H^1_\gq$ is defined analogously.
\end{definition}

We then put $$\X^{\diamond0}:=\Hom(\Sel^{\diamond0},\Q_p/\Z_p).$$

\textbf{The main conjecture.}
To set up the main conjecture, we need one more definition on the analytic side (i.e. the side in which the Beilinson-Flach elements live): These are the ``dual $\sharp/\flat$ at $\gp$ but coarse at $\gq$" Selmer groups.
\begin{convention}For $\diamond\in\{\sharp,\flat\}$, we write $\col^{\diamond}_\gp$ for the map $\col^\diamond$ at $\gp$, and $\col^{\diamond}_\gq$ for $\col^\diamond$ at $\gq$. \end{convention}
\begin{definition}We choose $K$ so that we can make a choice $\bullet\in\{\sharp,\flat\}$ as in Choice \ref{assumption} and let
$$\X^{\hat\bullet\forall}=\ker\left(H^1\left(K,T_f\tensor\Lambda_K(-\Psi)\right)\to\prod_{v\nmid p}H^1(I_v,T_f\tensor\Lambda_K(-\Psi))\times\frac{H^1(K_\gp,T_f\tensor\Lambda_K(-\Psi))}{\ker(\col^{\bullet}_\gp)}\right).$$
\end{definition}

\begin{proposition}\label{freeness}Given $\bullet\in\{\sharp,\flat\}$ (and $K$) as in Choice \ref{assumption}, $\X^{\hat\bullet\forall}$ is a free rank one $\Lambda_K$-module.
\end{proposition}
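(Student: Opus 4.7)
The strategy is to verify three properties of $\X^{\hat\bullet\forall}$ in sequence: it has $\Lambda_K$-rank one, it is torsion-free, and it is reflexive. Since $\Lambda_K \cong \Z_p[[X,Y]]$ is a three-dimensional regular local ring, hence a UFD by Auslander--Buchsbaum, a reflexive rank-one $\Lambda_K$-module is free (its class in the trivial divisor class group vanishes), and the claim follows.

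For the rank, I would apply Shapiro's lemma to identify $H^1(K, T_f \tensor \Lambda_K(-\Psi))$ with the Iwasawa cohomology $\bH^1_\Iw(K_\infty, T_f)$. Under the big-image hypothesis on the Galois representation attached to $E$ (implicit in the main theorem), $H^0(K, T_f \tensor \Lambda_K(-\Psi)) = 0$; a global Euler characteristic computation over $\Lambda_K$ then gives that this global cohomology group has generic $\Lambda_K$-rank two, coming from the unique complex place of $K$. The local condition $\ker(\col_\gp^\bullet)$ at $\gp$ is a rank-one $\Lambda_K$-direct summand of $\bH^1_\Iw(K_{\infty,\gp}, T_f)$, since $\col_\gp^\bullet$ surjects onto a nonzero ideal of $\Lambda_K$ by the construction of Subsection \ref{subsec:twovariable}; imposing this condition cuts the generic rank by one, while the unramified conditions at the bad primes $v \nmid p$ are only $\Lambda_K$-torsion losses and the no-condition at $\gq$ preserves the remaining rank.

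For torsion-freeness, I would combine the Poitou--Tate global-to-local sequence with the vanishing of $H^0(K_\Sigma/K, T_f \tensor \Lambda_K(-\Psi))$; the latter implies that $H^1_\Sigma(K, T_f \tensor \Lambda_K(-\Psi))$ is itself $\Lambda_K$-torsion-free, and since $\X^{\hat\bullet\forall}$ is by definition a submodule (the kernel of a localization map), it inherits torsion-freeness. For reflexivity, I would apply $\Hom_{\Lambda_K}(-,\Lambda_K)$ twice and use the Poitou--Tate duality pairing with the dual Selmer group $\X^{\bullet0}$, which is $\Lambda_K$-torsion by the discussion leading to the Greenberg-type main conjecture, to identify $\X^{\hat\bullet\forall}$ with its $\Lambda_K$-bidual.

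The hard part is verifying that the local quotient $\bH^1_\Iw(K_{\infty,\gp}, T_f)/\ker(\col_\gp^\bullet)$ is $\Lambda_K$-torsion-free of rank one: this is precisely what guarantees both that the generic rank drop is exactly one and that reflexivity survives the Selmer quotient. This rests on the explicit structure of the $\sharp/\flat$-Coleman maps built in Proposition \ref{Colemanmapexistence}, together with their unramified and totally ramified compatibilities, which show that $\col_\gp^\bullet$ has image an ideal with clean $\Lambda_K$-depth properties.
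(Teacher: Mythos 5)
Your global architecture (rank one, plus torsion-freeness, plus reflexivity, then ``reflexive of rank one over the UFD $\Lambda_K\cong\Z_p[[X,Y]]$ implies free'') is a legitimate route and genuinely different from the paper's, which deduces freeness from irreducibility of the residual representation and gets the rank by a specialization argument. But there is a real gap in your rank computation, and you have misidentified where the difficulty sits. Granting weak Leopoldt (which you should state explicitly: you need $H^2(K,T_f\tensor\Lambda_K(-\Psi))$ to be $\Lambda_K$-torsion for the Euler characteristic to give generic rank two for $H^1$), the kernel of the map from a rank-two module to the rank-one local quotient $H^1(K_\gp,T_f\tensor\Lambda_K(-\Psi))/\ker(\col^\bullet_\gp)$ has rank $2-r$, where $r$ is the rank of the \emph{image} of global cohomology in that quotient. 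Your claim that the condition at $\gp$ ``cuts the generic rank by one'' is precisely the claim $r=1$, i.e.\ that not every global class localizes at $\gp$ into $\ker(\col^\bullet_\gp)$. That is a global non-degeneracy statement, not a consequence of the local structure of the Coleman map: even if the local quotient is free of rank one (which it is by construction), the global-to-local map into it could a priori vanish, leaving $\X^{\hat\bullet\forall}$ of rank two. So the ``hard part'' you isolate at the end --- torsion-freeness of the local quotient --- is not the missing ingredient.

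The paper supplies the missing non-degeneracy by a different device: it maps $\X^{\hat\bullet\forall}$ to the rank-one quotient of $H^1(K_\gq,T\tensor\Lambda_K(-\Psi))$ by the $\bullet$-condition at $\gq$, and shows the kernel $N$ of that map is torsion by specializing to the $\Q$-cyclotomic line and invoking equation (3) in the proof of \cite[Theorem 7.14]{shuron} (ultimately a Kato/Rohrlich-type non-vanishing input); this gives rank at most one. The lower bound comes from exhibiting $\Delta_\bullet\in\X^{\hat\bullet\forall}$ with $\col^\bullet_\gq(\Delta_\bullet)\neq0$, which is exactly where Choice \ref{assumption} enters. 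Your proposal never uses Choice \ref{assumption} or the Beilinson--Flach classes, which is a symptom of the missing input. The remaining steps (torsion-freeness from vanishing of $H^0$, reflexivity of a Selmer kernel inside a reflexive Iwasawa cohomology module, and freeness of a reflexive rank-one module over a UFD) are sound in outline and would complete a correct alternative proof once the rank is actually pinned down.
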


\begin{proof}
Note that we then have $\col_\gp^\bullet$ is non-zero. 
The freeness follows from the irredicbility of the $\mod p$ representation.
First we show that the $\Lambda_K$-rank of $\X^{\hat\bullet\forall}$ is at most one. We know from the construction of Coleman maps that
$$ \rk_{\Lambda_K} \frac{H^1(K_\gq,T\tensor \Lambda_K(-\Psi))}{H^1_\bullet(K_\gq,T\tensor\Lambda_K(-\Psi))} =1. $$ 
But again from the construction of the Coleman maps, we see that the specialization of $$N:= \ker \left( \X^{\hat\bullet\forall}\rightarrow \frac{H^1(K_\gq,T\tensor \Lambda_K(-\Psi))}{H^1_\bullet(K_\gq,T\tensor\Lambda_K(-\Psi))}\right)$$  to the $\Q$-cyclotomic line has rank zero, by equation $(3)$ in \cite[Proof of Theorem 7.14] {shuron}. 
Thus $N$ itself must be of rank zero, whence $\X^{\gp\hat{\bullet}}$ has rank at most one. To see that the rank is at least one, consider the composed map 
$$\X^{\hat\bullet\forall}\longrightarrow\frac{H^1(K_\gq,T\tensor \Lambda_K(-\Psi))}{H^1_\bullet(K_\gq,T\tensor\Lambda_K(-\Psi))}\overset{\col^\bullet}{\longrightarrow}\Lambda_K$$
and note that we have chosen $\bullet$ so that $\col^\bullet(\Delta_\bullet)\neq0$.
\end{proof}

\begin{proposition}Given $K$ and $\bullet\in\{\sharp,\flat\}$ as in Choice \ref{assumption}, $\X^{\bullet0}$ is a finitely generated torsion $\Lambda_K$-module.
\end{proposition}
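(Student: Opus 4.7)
The plan is to realize $\X^{\bullet 0}$ as a quotient of $\X^{\bullet\bullet}$, which is torsion by the preceding proposition, and conclude.

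First, I would verify $L^{\bullet\bullet}(X,Y)\neq 0$ so that the preceding proposition actually applies. By Choice \ref{assumption}, the cyclotomic $p$-adic $L$-functions $L^\bullet$ (for $E$) and $L^{\bullet_K}$ (for the quadratic twist $E^{(K)}$) are both nonzero. Specializing $L^{\bullet\bullet}(X,Y)$ along the anticyclotomic direction $t=0$ yields (up to units and known factors) the product $L^\bullet\cdot L^{\bullet_K}$, in analogy with the specialization $\X^{\bullet\bullet}\otimes\Lambda_K/(t)\cong\X^\bullet\oplus\X^\bullet_K$ of Proposition \ref{control} and the factorization of cyclotomic $L$-functions over $K$ into twists by the quadratic character of $K/\Q$. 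Since this specialization is nonzero, $L^{\bullet\bullet}\neq 0$, and the preceding proposition gives that $\X^{\bullet\bullet}$ is a finitely generated torsion $\Lambda_K$-module.

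Next, I would compare the defining local conditions of $\Sel^{\bullet\bullet}$ and $\Sel^{\bullet 0}$ after using Shapiro's lemma to identify their common ambient $H^1$. At each prime above $v\nmid p$, the classical Kummer condition used in $\Sel^{\bullet\bullet}$ coincides with the unramified condition used in $\Sel^{\bullet 0}$, by the standard equality $H^1_f(K_v,V_pE)=H^1_{\mathrm{unr}}(K_v,V_pE)$ for $v\nmid p$. At $\gp$, both impose the $\bullet$-condition (image in $E^\bullet$). At $\gq$, $\Sel^{\bullet 0}$ demands the strict (trivial image) condition, whereas $\Sel^{\bullet\bullet}$ only requires the image to land in the nonzero submodule $E^\bullet$; so the former is strictly more restrictive. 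Hence $\Sel^{\bullet 0}\subseteq\Sel^{\bullet\bullet}$, and Pontryagin dualizing presents $\X^{\bullet 0}$ as a quotient of $\X^{\bullet\bullet}$, from which finite generation and torsion are inherited.

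The main obstacle I anticipate is the Shapiro-type identification of the ambient cohomology groups, since $\Sel^{\bullet\bullet}$ is defined starting from the classical Selmer group of the discrete module $E[p^\infty]$ over $K_\infty$ while $\Sel^{\bullet 0}$ is phrased in terms of $H^1$ of the twisted big representation over $K$; one must check carefully that the local conditions and the $E^\bullet$-submodules match up at $\gp$ and $\gq$ under this identification. A backup route, if the $\Sel^{\bullet\bullet}$ comparison proves cumbersome, is to realize $\Sel^{\bullet 0}$ as a subgroup of $\Sel^{\forall 0}$ (same framework, same conditions outside $\gp$, with $\bullet$ strictly more restrictive than no condition at $\gp$) and invoke Wan's Theorem \ref{wanstheorem} together with the nonvanishing of $L_p^{\forall 0}$ at arithmetic points in $S$.
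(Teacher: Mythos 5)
Your proof is correct and its skeleton --- exhibit $\X^{\bullet 0}$ as a quotient of $\X^{\bullet\bullet}$ via the inclusion $\Sel^{\bullet 0}\hookrightarrow\Sel^{\bullet\bullet}$, then show $\X^{\bullet\bullet}$ is torsion --- is exactly the paper's. Where you diverge is the second step. The paper argues algebraically: by Proposition \ref{control}, $\X^{\bullet\bullet}\otimes\Lambda_K/(t)\cong\X^\bullet\oplus\X^\bullet_K$, and since Choice \ref{assumption} makes $L^\bullet$ and $L^{(K)\bullet}$ nonzero, Kato's bound forces both cyclotomic duals to be $\Lambda$-torsion; a free part of $\X^{\bullet\bullet}$ would survive the specialization at $t=0$, a contradiction. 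You instead argue analytically: the restriction $L^{\bullet\bullet}|_{t=0}=L^\bullet L^{(K)\bullet}\neq 0$ (a factorization the paper itself establishes in Corollary \ref{gold}, so there is no circularity, only a forward reference) shows $L^{\bullet\bullet}\neq 0$, and then the torsion proposition of subsection \ref{subsec:twovariable} applies --- note that is the proposition you mean, not the immediately preceding Proposition \ref{freeness}. Both routes ultimately reduce to the cyclotomic line and to Choice \ref{assumption}; yours leans on the two-variable Kato-type input behind that earlier proposition, the paper's on the control theorem. Your extra care in matching the local conditions under the Shapiro-type identification is warranted, since the paper silently asserts the inclusion of Selmer groups. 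One caveat on your backup route: as quoted in the paper, Theorem \ref{wanstheorem} is an inclusion of fractional ideals, which is vacuous when $\X^{\forall 0}$ is non-torsion (its characteristic ideal is then $(0)$), so it cannot by itself deliver torsionness of $\X^{\forall 0}$; the main argument should be retained.
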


\begin{proof}
The inclusion $\Sel^{\bullet0}\injects\Sel^{\bullet\bullet}$ gives rise to a surjection
$$\X^{\bullet\bullet}\longrightarrow\X^{\bullet0}\longrightarrow0.$$
To prove that $\X^{\bullet0}$ is finitely generated torsion, it thus suffices to prove that $\X^{\bullet\bullet}$ is.
This follows from Proposition \ref{control}, since any free part of $\X^{\bullet\bullet}$ would cause $\X^{\bullet\bullet}\tensor \Lambda_K/(t)$ to be free, where $t$ is the anticyclotomic variable. Note that since $\col_\gp^\bullet\neq0$, we necessarily have that the corresponding one-variable Coleman map is not zero: $\col^\bullet\neq0$ at $p$.
\end{proof}

\begin{conjecture}{(The $\sharp/\flat$ Beilinson-Flach element  main conjecture)}\label{beilinsonflach}
Given $\circ\in\{\sharp,\flat\}$, we have that $$\Char(\X^{\hat\bullet\forall}/\Delta_\circ\Lambda_K)=\Char(\X^{\bullet0}).$$
\end{conjecture}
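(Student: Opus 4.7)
I would reduce Conjecture \ref{beilinsonflach} to (one inclusion of) the Greenberg-type main conjecture of Subsection \ref{subsec:greenberg}, for which Wan's Theorem \ref{wanstheorem} supplies the needed direction. The first step is to construct a pair of $\sharp/\flat$ Wan maps $\Wan^{\sharp/\flat}_\gp$ on the local cohomology at $\gp$, dual to the Coleman maps $\col^{\sharp/\flat}_\gp$ in the sense alluded to in the introduction: they should describe the kernel of $\col^{\sharp/\flat}_\gp$. The key computation is then that $\Wan^{\bullet}_\gp$ applied to $\loc_\gp(\Delta_\circ)$ equals $L_p^{\forall 0}$ up to factors of trivial characteristic ideal. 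This is made plausible by the construction of $(\Delta_\sharp,\Delta_\flat)$ through the $\Log(X)$-factorization in Corollary \ref{h}, which was engineered precisely to cancel the transcendental contributions at $\gp$ and match the dual-style local condition.

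Given this identification, Poitou--Tate global duality applied to the $\sharp/\flat$-modified Selmer complexes summarized in the table in the introduction should yield a four-term exact sequence of $\Lambda_K$-modules
$$0\longrightarrow \X^{\hat\bullet\forall}/\Delta_\circ\Lambda_K \longrightarrow \Lambda_K/(L_p^{\forall 0})\longrightarrow \X^{\forall 0}\longrightarrow \X^{\bullet 0}\longrightarrow 0,$$
in which the first arrow is $\Wan^{\bullet}_\gp$ composed with the isomorphism $\X^{\hat\bullet\forall}\cong\Lambda_K$ from Proposition \ref{freeness}, and the last surjection is induced by enlarging the local condition at $\gp$. Taking characteristic ideals gives
$$\Char(\X^{\hat\bullet\forall}/\Delta_\circ\Lambda_K)\cdot\Char(\X^{\forall 0}) \;=\; (L_p^{\forall 0})\cdot\Char(\X^{\bullet 0}),$$
so that Wan's inclusion $\Char(\X^{\forall 0})\subseteq (L_p^{\forall 0})$ yields $\Char(\X^{\hat\bullet\forall}/\Delta_\circ\Lambda_K)\subseteq\Char(\X^{\bullet 0})$. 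The opposite inclusion should follow from the Kings--Loeffler--Zerbes Euler system divisibility for $\Delta_\alpha$ and $\Delta_\beta$, transported through the $\Log(X)$-factorization of Corollary \ref{h} into an integral statement for $\Delta_\sharp$ and $\Delta_\flat$.

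The main obstacle will be the Wan map step: constructing $\Wan^{\sharp/\flat}_\gp$, proving the promised duality with $\col^{\sharp/\flat}_\gp$, and matching $\Wan^{\bullet}_\gp(\loc_\gp\Delta_\circ)$ with $L_p^{\forall 0}$ up to characteristic-ideal-trivial factors. In the case $a_p=0$, Wan's map $LOG^+$ plays the analogous role, and his explicit reciprocity-style computation identifies it with the Greenberg $p$-adic $L$-function; extending both the construction and the matching to general supersingular $p\mid a_p$ requires the full bivariate matrix framework of Subsection \ref{subsec:twovariable}, and in particular compatibility between $\Log(X)$ and the ``dual-$\bullet$'' local condition. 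Once these pieces are in hand, the Poitou--Tate step and the combination with Theorem \ref{wanstheorem} are comparatively formal.
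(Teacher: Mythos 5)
The statement you are proving is stated in the paper as a \emph{conjecture}, and the paper does not in fact establish the full equality $\Char(\X^{\hat\bullet\forall}/\Delta_\circ\Lambda_K)=\Char(\X^{\bullet0})$. What it proves (Theorem \ref{equivalence}) is that this conjecture is \emph{equivalent} to the Greenberg-type main conjecture up to powers of $p$ and ``strong'' primes, via exactly the mechanism you describe: the Wan maps $\Wan^{\sharp/\flat}$ describing $\ker\col^{\sharp/\flat}_\gp$, the identity $\Wan^\bullet(\Delta_\bullet)=L_p^{\forall0}\cdot h\cdot H$ (Proposition \ref{wantol}), and the Poitou--Tate four-term sequence $0\to \X^{\hat\bullet\forall}/\Delta_\bullet\to \Lambda_K\tensor\Q_p/\Wan^\bullet(\Delta_\bullet)\to\X^{\forall0}\to\X^{\bullet0}\to0$, with injectivity on the left coming from Proposition \ref{freeness} together with $\col^\bullet(\Delta_\bullet)\neq0$. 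So that part of your plan is the paper's argument. It only yields one inclusion, however, and you have the divisibility bookkeeping reversed: from $\Char(\X^{\hat\bullet\forall}/\Delta)\cdot\Char(\X^{\forall0})=(L_p^{\forall0})\cdot\Char(\X^{\bullet0})$ and Wan's inclusion $\Char(\X^{\forall0})\subseteq(L_p^{\forall0})$ one gets $\Char(\X^{\bullet0})\subseteq\Char(\X^{\hat\bullet\forall}/\Delta)$, not the containment you wrote. (Correspondingly, it is the Euler-system method, not the Eisenstein-congruence method, that produces bounds of the shape $\Char(\X)\supseteq(L)$, as in Kato's half of the cyclotomic conjecture.)

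The genuine gap is your second inclusion. You assert that the reverse divisibility ``should follow from the Kings--Loeffler--Zerbes Euler system divisibility for $\Delta_\alpha,\Delta_\beta$, transported through the $\Log(X)$-factorization.'' This is precisely the step that is not available in the two-variable, non-ordinary setting: running the Euler system machine over $\Lambda_K$ with the integral signed elements $\Delta_\sharp,\Delta_\flat$ and the $\sharp/\flat$ local conditions at $\gp$ is not carried out in the paper and is not a formal consequence of Corollary \ref{h}. The paper deliberately avoids this: it transports Wan's single inclusion through the equivalences to the $\sharp/\flat$-$\sharp/\flat$ conjectures, descends to the cyclotomic line (Proposition \ref{control}, Corollary \ref{gold}), and closes the argument there using Kato's one-variable divisibility, while also disposing of the prime $(p)$ via a $\mu$-invariant computation (Proposition \ref{muinvar}) and of the strong primes separately. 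Your proposal neither supplies the two-variable Euler system input nor addresses the ``up to $p$ and strong primes'' discrepancies (the factors $h$ and $H$, and the primes pulled back from $\Z_p[[\Gal(K_\infty/K_{cyc})]]$), so as written it does not prove the stated equality.
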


\section{Connecting the main conjectures together}
\label{sec:connecting}

\subsection{The $\sharp/\flat$-Beilinson-Flach conjectures are equivalent to the Greenberg-type one}
The aim of this subsection is to prove an equivalence of these main conjectures up to powers of $p$ and certain primes which we term \text{exceptional}. We do this by constructing maps $\Wan^\sharp$ and $\Wan^\flat$ which generalize Wan's map $LOG^+$.
Put $$\H_n(X)=:\smat{\omega_n^\sharp & \Phi_{p^n}\omega_{n-1}^\sharp \\ \omega_n^\flat & \Phi_{p^n}\omega_{n-1}^\flat}.$$
We can describe $\ker(h_{n,m})$, defined in Definition \ref{hn}, explicitly by $\ker(h_{n,m})=(\Lambda_{n,m}\oplus\Lambda_{n,m})H_n^\perp$, where $$\H_n^\perp=X\smat{\omega_n^\flat & -\omega_n^\sharp \\ \Phi_{p^n}\omega_{n-1}^\flat & -\Phi_{p^n}\omega_{n-1}^\sharp}.$$
(See \cite[Lemma 5.8]{shuron} for the $\Lambda_n$-module case. The same arguments apply to the $\Lambda_{n,m}$-module case as well.

\begin{remark} Similarly, the kernel of multiplying by $\H_n^\perp$ is $(\Lambda_{n,m}\directsum\Lambda_{n,m})\H_n$.
\end{remark}

\begin{lemma}\label{annihilation}
We have $(c_{n,m},c_{n-1,m})\H_n^\perp=(0,0)$.
\end{lemma}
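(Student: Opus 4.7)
The plan is to unpack the row-vector equation. The two entries of $(c_{n,m},c_{n-1,m})\H_n^\perp$ are $\pm X\bigl(\omega_n^\bullet c_{n,m}+\Phi_{p^n}(1+X)\omega_{n-1}^\bullet c_{n-1,m}\bigr)$ for $\bullet\in\{\flat,\sharp\}$, so the task is to prove that each such expression vanishes in $\Ehat(\gm_{n,m})$ viewed as a $\Lambda_{n,m}$-module via Galois action.

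First I would read off from $\H_n=-C_1\cdots C_{n-1}\smat{1 & 0 \\ 0 & \Phi_{p^n}}$ the factorisation $\H_{n+1}=\H_n\smat{a_p & \Phi_{p^{n+1}} \\ -1 & 0}$, which immediately yields the three-term recurrence $\omega_n^\bullet=a_p\omega_{n-1}^\bullet-\Phi_{p^{n-1}}(1+X)\omega_{n-2}^\bullet$, together with initial data $\omega_0^\sharp=0$, $\omega_1^\sharp=-1$, $\omega_0^\flat=-1$, $\omega_1^\flat=0$ from $\H_1=-\smat{1 & 0 \\ 0 & \Phi_p}$. In parallel I would rewrite Lemma \ref{tracecompatibility} as the group-ring identity $\Phi_{p^n}(1+X)c_{n,m}=a_pc_{n-1,m}-c_{n-2,m}$, using $\tr_{k_{n,m}/k_{n-1,m}}=\sum_{i=0}^{p-1}\gamma^{ip^{n-1}}=\Phi_{p^n}(1+X)$ in $\Lambda_{n,m}$.

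Next I would induct on $n$. The base case $n=1$ reduces, after plugging in the initial $\omega$-values, to $Xc_{0,m}=0$ (trivial, since $c_{0,m}$ sits at the ground level) and $Xc_{1,m}=0$. The nontrivial part $Xc_{1,m}=0$ follows from the Honda-theoretic description of $c_{1,m}$: its logarithm only involves $\zeta_p-1$, and the topological generator $\gamma$ of the cyclotomic $\Z_p$-part of $\Gal(k^m(\zeta_{p^\infty})/k^m)$ acts through the cyclotomic character, which satisfies $\kappa(\gamma)\equiv1\pmod p$, so $\gamma\zeta_p=\zeta_p$. For the induction step, substituting the $\omega$-recurrence gives
$$X\bigl(\omega_n^\bullet c_{n,m}+\Phi_{p^n}\omega_{n-1}^\bullet c_{n-1,m}\bigr)=X\omega_{n-1}^\bullet\bigl(a_pc_{n,m}+\Phi_{p^n}c_{n-1,m}\bigr)-X\Phi_{p^{n-1}}\omega_{n-2}^\bullet c_{n,m},$$
and I would apply $\Phi_{p^n}c_{n-1,m}=pc_{n-1,m}$ together with the trace identity to collapse the first summand against the level-$(n-1)$ version of the claim.

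The main obstacle will be tracking how $\Phi_{p^n}(1+X)$ acts on points at different layers: on $c_{n,m}$ it invokes the full trace relation, while on $c_{n-1,m}$ it collapses to multiplication by $p$ because $\gamma^{p^{n-1}}$ fixes $c_{n-1,m}$. Reconciling these two regimes while telescoping the recursion is the heart of the argument, and is supported by the auxiliary identity $X\det(\H_n)=0$ in $\Lambda_{n,m}$, which follows from $\det(\H_n)=\pm\prod_{i=1}^n\Phi_{p^i}(1+X)$ and $(1+X)^{p^n}-1=0$.
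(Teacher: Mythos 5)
Your unpacking takes the printed matrix at face value and tries to prove, by induction, that $X\bigl(\omega_n^\bullet c_{n,m}+\Phi_{p^n}(1+X)\,\omega_{n-1}^\bullet c_{n-1,m}\bigr)=0$ in $\Ehat(\gm_{n,m})$. That combination (coupling $\omega_n^\bullet$ with the top point $c_{n,m}$) is not the one that holds, and the argument cannot be repaired in this form. Concretely, at $n=2$ one has $\omega_2^\flat=\Phi_p(1+X)$ and $\omega_1^\flat=0$, so the $\flat$-entry is $X\Phi_p(1+X)\,c_{2,m}=(\gamma^p-1)c_{2,m}$, which is nonzero: $c_{2,m}$ does not descend to $k_{1,m}$ (its logarithm involves a uniformizer of the top layer with unit coefficient; if it descended, the pairing $P_{(n,m),c_{n,m}}$ and hence the whole Coleman-map construction would degenerate). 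The same objection already kills your base case: $Xc_{1,m}=0$ would force $c_{1,m}\in\Ehat(\gm_{0,m})$, and the justification offered ("the logarithm only involves $\zeta_p-1$") is not correct with the paper's indexing $k_n\subset k(\zeta_{p^{n+1}})$. The induction step also cannot close even formally: the trace identity only rewrites the full norm $\Phi_{p^n}(1+X)c_{n,m}=a_pc_{n-1,m}-c_{n-2,m}$, never $a_pc_{n,m}$ or $c_{n,m}$ alone, so after substituting the $\omega$-recurrence you are left with the top-level terms $X\omega_{n-1}^\bullet a_pc_{n,m}-X\Phi_{p^{n-1}}(1+X)\omega_{n-2}^\bullet c_{n,m}$, which have nothing to cancel against in the level-$(n-1)$ statement (that statement involves only $c_{n-1,m},c_{n-2,m}$).

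The identity that is both true and actually needed for the next proposition (it is exactly the condition for $(c_{n,m},c_{n-1,m})$ to lie in the $\Lambda_{n,m}$-row space of $\H_n$, i.e.\ the annihilator must be built from the right kernel of $\H_n$) couples the points crosswise: for $\bullet\in\{\sharp,\flat\}$,
\begin{equation*}
X\bigl(\Phi_{p^n}(1+X)\,\omega_{n-1}^\bullet\, c_{n,m}-\omega_n^\bullet\, c_{n-1,m}\bigr)=0 .
\end{equation*}
For this version your own toolkit does work and telescopes cleanly: one application of the trace identity to $\Phi_{p^n}(1+X)c_{n,m}$ together with $\omega_n^\bullet=a_p\omega_{n-1}^\bullet-\Phi_{p^{n-1}}(1+X)\omega_{n-2}^\bullet$ reduces the level-$n$ expression verbatim to the level-$(n-1)$ one, and the chain terminates with $Xc_{0,m}=0$ and $(\gamma^p-1)c_{1,m}=0$, both true by Galois descent of $c_{0,m}$ and $c_{1,m}$; no statement about logarithms of roots of unity is required. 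Note also that the paper's own proof is of a different nature: it deduces the annihilation formally from the $\Lambda_{n,m}$-(semi)linearity of the pair of pairings $(P_{(n,m),c_{n,m}},P_{(n,m),c_{n-1,m}})$, with the trace relations entering only through the already established Coleman-map formalism, rather than by a direct computation in the formal group.
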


\begin{proof}
$\Lambda_{n,m}$-bilinearity of the pair of pairings $(P_{{n,m},c_{n,m}},P_{{n,m},c_{n-1,m}})$.
\end{proof}

\begin{proposition}We can choose $(b_{n,m}^\sharp,b_{n,m}^\flat)\in H^1(k_{n,m},T)^{\directsum 2}$ so that:

$$(i) \quad (b_{n,m}^\sharp,b_{n,m}^\flat)\H_n=(c_{n,m},c_{n-1,m})$$

$$(ii)\quad \tr_{k_{n,m}/k_{n-1,m}}b_{n,m}^{\sharp/\flat}=b_{n-1,m}^{\sharp/\flat}, \text{ and } \tr_{k_{n,m}/k_{n,m-1}} b_{n,m}^{\sharp/\flat}=b_{n,m-1}^{\sharp/\flat}.$$
\end{proposition}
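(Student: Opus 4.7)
The plan has two stages: first, exhibit classes $(b_{n,m}^\sharp,b_{n,m}^\flat)$ satisfying (i) at each level $(n,m)$; then arrange the choices coherently so that both trace relations in (ii) hold.

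\textit{Existence of a lift at level $(n,m)$.} Lemma~\ref{annihilation} says $(c_{n,m},c_{n-1,m})\cdot\H_n^\perp=(0,0)$, and the Remark preceding it identifies, in the free module $\Lambda_{n,m}\directsum\Lambda_{n,m}$, the kernel of right-multiplication by $\H_n^\perp$ with the image of right-multiplication by $\H_n$. The task is to promote this formal factorization to one inside $H^1(k_{n,m},T)^{\directsum 2}$. Using the explicit factorization $\H_n=-C_1(X)\cdots C_{n-1}(X)\,\smat{1 & 0 \\ 0 & \Phi_{p^n}(1+X)}$ together with the trace formulas of Lemma~\ref{tracecompatibility}, one writes the required lift as an explicit $\Lambda_{n,m}$-linear combination of $c_{k,m}$ for $0\leq k\leq n$, with coefficients built from the recurrence $b_{i+2}=a_pb_{i+1}-b_i$ that appears in the definition of $\lambda_{n,u}$. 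This is exactly what makes $\H_n$-divisibility effective in the formal group, not merely formal in $\Lambda_{n,m}$.

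\textit{Unramified trace compatibility.} The entries of $\H_n$ only involve the cyclotomic variable $X$, so right-multiplication by $\H_n$ commutes with corestriction $\tr_{k_{n,m+1}/k_{n,m}}$. Applying corestriction to $(b_{n,m+1}^\sharp,b_{n,m+1}^\flat)\H_n=(c_{n,m+1},c_{n-1,m+1})$ and invoking the first trace relation of Lemma~\ref{tracecompatibility}, one sees that $(\tr b_{n,m+1}^\sharp,\tr b_{n,m+1}^\flat)$ is itself a valid lift at level $(n,m)$, so we may simply set $b_{n,m}^{\sharp/\flat}:=\tr_{k_{n,m+1}/k_{n,m}} b_{n,m+1}^{\sharp/\flat}$.

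\textit{Ramified trace compatibility.} The second trace relation rewrites as a matrix identity $\tr(c_{n,m},c_{n-1,m})=(c_{n-1,m},c_{n-2,m})\smat{a_p & p \\ -1 & 0}$. On the other hand, a direct calculation gives $\H_n=\H_{n-1}\smat{a_p & \Phi_{p^n}(1+X) \\ -1 & 0}$, and $\Phi_{p^n}(1+X)\equiv p$ modulo the ideal killed after multiplication by $\H_{n-1}$; so applying $\tr_{k_{n,m}/k_{n-1,m}}$ to the level-$n$ identity produces the level-$(n-1)$ identity. Uniqueness of lifts up to $\ker(\cdot\H_{n-1})$ then allows us to take $b_{n-1,m}^{\sharp/\flat}:=\tr b_{n,m}^{\sharp/\flat}$, the remaining indeterminacy being killed by passing to a projective limit (which is already essentially the setting we are working in) and projecting back down.

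\textit{Main obstacle.} The delicate step is the existence in the first stage. The Remark gives exactness in the free module setting, but $H^1(k_{n,m},T)$ generally carries $\Lambda_{n,m}$-torsion, so annihilation by $\H_n^\perp$ does not automatically imply divisibility by $\H_n$ at the cohomology level. Circumventing this requires exploiting the formal-group origin of the $c_{k,m}$'s, namely the explicit formulas for $\log_\Ehat(c_{n,u})$ in terms of $\lambda_{n,u}$ and the uniformizers $\pi_{n-k,u}$, to \emph{construct} a lift rather than deducing one abstractly. Coordinating all these lifts across both $n$ and $m$ so that the two trace compatibilities hold simultaneously is the real technical heart of the argument.
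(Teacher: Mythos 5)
Your treatment of (ii) is essentially the paper's: corestriction in the unramified direction commutes with $\H_n$ (whose entries involve only $X$), and in the ramified direction both the paper and you reduce to the identity $\H_n=\H_{n-1}\smat{a_p & p \\ -1 & 0}$ after restriction to level $n-1$, combined with the second trace relation of Lemma \ref{tracecompatibility}; cancelling the invertible-up-to-isogeny matrix $\smat{a_p & p \\ -1 & 0}$ then lets you take $b_{n-1,m}^{\sharp/\flat}:=\tr b_{n,m}^{\sharp/\flat}$, exactly as in the paper.

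For (i), however, you have misdiagnosed the difficulty and replaced a working argument with one that has a genuine gap. The "main obstacle" you describe is not there: in the supersingular setting $H^1(k_{n,m},T)$ is a \emph{free} $\Lambda_{n,m}$-module of rank two (this is standard, going back to Kobayashi and used throughout \cite{shuron}; it follows from $\Ehat(\gm_{\infty,m})[p]=0$). Tensoring the exact sequence $\Lambda_{n,m}^{\oplus2}\xrightarrow{\cdot\H_n}\Lambda_{n,m}^{\oplus2}\xrightarrow{\cdot\H_n^\perp}\Lambda_{n,m}^{\oplus2}$ of the Remark with this free module shows that on $H^1(k_{n,m},T)^{\oplus2}$ the kernel of $\cdot\H_n^\perp$ is exactly the image of $\cdot\H_n$, so Lemma \ref{annihilation} immediately produces the lift; this is precisely the paper's (one-line) argument. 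Your proposed substitute — writing $b_{n,m}^{\sharp/\flat}$ as an explicit $\Lambda_{n,m}$-linear combination of the $c_{k,m}$ built from the recurrence $b_{i+2}=a_pb_{i+1}-b_i$ — is not carried out, and it cannot work as stated: $\det\H_n$ is a product of cyclotomic polynomials $\Phi_{p^k}(1+X)$, which are zero divisors in $\Lambda_{n,m}$, so one cannot solve $(b^\sharp,b^\flat)\H_n=(c_{n,m},c_{n-1,m})$ by inverting or adjugating $\H_n$, and the classes $b_{n,m}^{\sharp/\flat}$ need not lie in the $\Lambda_{n,m}$-submodule of $H^1(k_{n,m},T)$ generated by formal-group points at all. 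The correct fix is to invoke freeness of $H^1(k_{n,m},T)$, not to construct the lift inside the formal group.
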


\begin{proof}
The existence of such $(b_{n,m}^\sharp,b_{n,m}^\flat)$ satisfying $(i)$ follows from Lemma \ref{annihilation} and the remark. 

For $(ii)$, the trace compatibility  with respect to $m$ follows from that of $(c_{n,m},c_{n-1,m})$. For that with respect to $n$, note that

$$\tr_{n/n-1}\left((b_{n,m}^\sharp,b_{n,m}^\flat)\H_n\right)=\left(\tr_{n/n-1}b_{n,m}^\sharp,\tr_{n/n-1}b_{n,m}^\flat\right)\H_{n-1}\smat{a_p & p \\ -1 & 0}, $$
$$\text{while } \tr_{n/n-1}\left(c_{n,m},c_{n-1,m}\right)=\left(c_{n-1,m},c_{n-2,m}\right)\smat{a_p & p \\ -1 & 0}.$$
\end{proof}

\begin{definition}
We denote by $H_\sharp(k_{n,m},T)$ the rank one $\Lambda_{n,m}$-submodule of $H^1(k_{n,m},T)$ generated by $b_{n,m}^\sharp$, and define $H^1_\flat(k_{n,m},T)$ analogously.
\end{definition}

\begin{proposition}We have $H_\sharp^1:=\varprojlim_{n,m}H_\sharp^1(k_{n,m},T)=\ker \col^\sharp$ and $H_\flat^1:=\varprojlim_{n,m}H_\flat^1(k_{n,m},T)=\ker \col^\flat$.
\end{proposition}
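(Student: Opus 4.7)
I would prove the two containments $H^1_\sharp \subseteq \ker \col^\sharp$ and $\ker \col^\sharp \subseteq H^1_\sharp$ separately in the limit $\varprojlim_{n,m}$, with the $\flat$-statement following by an identical argument.

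For the first containment, the strategy is to exploit the two parallel $\H_n$-relations in play: the defining equation for $(b^\sharp_{n,m}, b^\flat_{n,m})$, namely $(b^\sharp_{n,m}, b^\flat_{n,m})\H_n = (c_{n,m}, c_{n-1,m})$, and the characterizing equation for the Coleman map, namely $(\col^\sharp_{n,m}(z), \col^\flat_{n,m}(z))\H_n \equiv (P_{(n,m),c_{n,m}}(z), P_{(n,m),c_{n-1,m}}(z))$ modulo $\ker h_{n,m}$. Setting $z = b^\sharp_{n,m}$ and substituting $c_{n,m}, c_{n-1,m}$ via the first relation, the right-hand side expands by $\Lambda_{n,m}$-linearity of the pairing $P$ (which satisfies $P_{\lambda x}(z) = \iota(\lambda)P_x(z)$ for the involution $\iota : \sigma \mapsto \sigma^{-1}$) to
\[ (\col^\sharp(b^\sharp_{n,m}), \col^\flat(b^\sharp_{n,m}))\H_n = (P_{b^\sharp}(b^\sharp), P_{b^\flat}(b^\sharp))\iota(\H_n). \]
The self-pairing matrix on the right is $\iota$-transpose-symmetric (a consequence of the Galois-equivariance and symmetry of the Weil-pairing-induced form on $H^1 \times H^1$), which one checks forces the right side to lie in the $\Lambda_{n,m}$-span of the second row of $\H_n$. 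Hence the $\sharp$-coordinate vanishes modulo $\ker h_{n,m}$, giving $\col^\sharp(b^\sharp) = 0$ in the limit.

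For the reverse containment, I would argue by ranks: the module $\varprojlim H^1(k_{n,m}, T)$ has $\Lambda$-rank 2 (standard at supersingular primes), so $\ker \col^\sharp$ has rank 1 once $\col^\sharp$ is shown generically nonzero, and $H^1_\sharp$ is also rank 1 since $b^\sharp$ is non-torsion (verified by specializing to the cyclotomic line and invoking the known cyclotomic result of \cite{shuron}). This reduces to showing the torsion quotient $\ker \col^\sharp / H^1_\sharp$ vanishes. I would argue this by observing that any $z \in \ker \col^\sharp$ satisfies the same $\H_n$-orthogonality relations modulo $\ker h_{n,m}$ that characterized $b^\sharp$ uniquely in the construction of the previous proposition, forcing $z$ to be a $\Lambda$-multiple of $b^\sharp$.

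\textbf{Main obstacle.} The delicate part is the symmetry-based calculation establishing $\col^\sharp(b^\sharp) = 0$. One must carefully track the involution $\iota$, the Galois-equivariance of the Tate pairing (with the sign conventions for the cup product and the Weil pairing combining to make the induced form on $H^1 \times H^1$ symmetric rather than alternating), and the meaningful interpretation of $P_x(\,\cdot\,)$ when $x$ lies in $H^1$ rather than in $\F$. My approach would be to first verify the identity in the cyclotomic case ($m = 0$), where it reduces to the computation in \cite[Section 5]{shuron}, and then extend via the unramified trace compatibility of Lemma \ref{tracecompatibility}.
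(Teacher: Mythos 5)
Your first containment ($H^1_\sharp\subseteq\ker\col^\sharp$) is essentially what the paper dismisses with the words ``by construction'': the defining relation $(b^\sharp_{n,m},b^\flat_{n,m})\H_n=(c_{n,m},c_{n-1,m})$, together with the $\iota$-semilinearity and symmetry of the pairing $P$, is exactly the input needed, and your plan to check the involution/sign bookkeeping at $m=0$ first and then propagate via the unramified trace compatibility is reasonable.

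The gap is in the reverse containment. Your rank count only shows that $\ker\col^\sharp/H^1_\sharp$ is a torsion $\Lambda$-module; the entire content of the proposition is that this torsion module is zero, and your justification --- that any $z\in\ker\col^\sharp$ ``satisfies the same $\H_n$-orthogonality relations that characterized $b^\sharp$ uniquely'' --- does not hold up. The element $b^\sharp_{n,m}$ is \emph{not} uniquely characterized by such relations: the solution of $(b^\sharp,b^\flat)\H_n=(c_{n,m},c_{n-1,m})$ is only well defined up to the kernel of right multiplication by $\H_n$, and at each finite level $\ker\col^\sharp_{n,m}$ is strictly larger than $\Lambda_{n,m}b^\sharp_{n,m}$, so membership in the kernel cannot force $z$ to be a $\Lambda$-multiple of $b^\sharp$ level by level. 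The paper closes this gap differently: it applies Nakayama's lemma to the finitely generated $\Lambda$-module $\ker\col^\sharp/H^1_\sharp$ to reduce everything to the bottom layer $n=0$, where the Coleman map has the explicit form $\col_{0,m}=(P_{0,m,b_0^\sharp},P_{0,m,b_0^\flat})$; there the self-annihilation of $b_0^{\sharp}$ under the cup product identifies $\ker\col^\sharp_{0,m}$ with the module generated by $b^\sharp_{0,m}$. To repair your argument you must either insert this Nakayama/level-$0$ step or give an actual proof that the torsion of $\ker\col^\sharp/H^1_\sharp$ vanishes; as written, that step is missing.
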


\begin{proof} For $(b_{n,m}^\sharp,b_{n,m}^\flat)\in H^1(k_{n,m},T)$, we have $\col_{n,m}(b_{n,m}^\sharp,b_{n,m}^\flat)=0$ in $\frac{\Lambda_{n,m}\directsum\Lambda_{n,m}}{\ker \H_n}$ by construction. Thus, we have $H_\sharp^1\subset \ker \col^\sharp$ and $H_\flat^1\subset \ker \col^\flat$. To prove equality, we show that $\frac{\ker \col^\sharp}{H_\sharp^1}=0$. By Nakayama's lemma, it suffices to do this for the Coleman map at level $0$: By \cite[discussion before Definition 7.2]{shuron}, we have $$\col_{0,m}=(\col_{0,m}^\sharp,\col_{0,m}^\flat)=(-a_pP_{0,m,c_{0,m}}+P_{0,m,c_{0,m}},-P_{c_{0,m}})=\left(P_{0,m,b_0^\sharp},P_{0,m,b_0^\flat}\right).$$
But $b_0^{\sharp/\flat}$ annihilate themselves under the cup product, so $\ker\col_{0,m}^\sharp$ is generated by $b_{0,m}^\sharp$. The $\flat$ case is proved similarly.
\end{proof}

\begin{definition}\label{exceptional} The maps $\Wan^\sharp$ and $\Wan^\flat$ map elements of $H_{\sharp/\flat}^1$ to their coordinates, i.e. we put:
$$\Wan^\sharp:H_\sharp^1=\varprojlim_{n,m}H_\sharp^1(k_{n,m},T)\to \Lambda, (f^{\sharp/\flat}_{n,m}\cdot b_{n,m}^\sharp)_{n,m} \mapsto (f^{\sharp/\flat}_{n,m})_{n,m}$$
and define $\Wan^\flat$ similarly.
\end{definition}

\begin{proposition}\label{wantol}Recall we have chosen $\bullet\in\{\sharp,\flat\}$ so that $\Delta_\bullet\neq0$. For such a  choice, we have $$\Wan^\bullet(\Delta_\bullet)=L_p^{\forall0}\times h \times H,$$ where $h$ is the constant chosen just before Corollary \ref{h} and $H \in \Frac \Lambda_\dg$. \end{proposition}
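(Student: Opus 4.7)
The plan is to combine the integral factorization $h\cdot(\Delta_\alpha,\Delta_\beta)=(\Delta_\sharp,\Delta_\flat)\Log(X)$ from Proposition~\ref{centralprop} with a $\gp$-local explicit reciprocity law that relates the local images of $\Delta_\alpha,\Delta_\beta$ to Wan's $p$-adic $L$-function $L_p^{\forall 0}$ via the CM Hida family $\dg$.

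In the $a_p=0$ case, Wan establishes an identity of the shape $LOG^+(\loc_\gp\Delta_p)=L_p^{\forall 0}\cdot H$ with $H\in\Frac\Lambda_\dg$ (see \cite[Section 7]{wan}), where the constant $H$ captures the comparison of motivic and automorphic periods, and $LOG^+$ is the $\gp$-analogue of a Perrin-Riou-type map taking values along the kernel of a Coleman map. My first step would be to establish the supersingular analogue in the $\sharp/\flat$-setting: I produce maps $\calM^{\sharp/\flat}\colon H^1_{\sharp/\flat}\to\Lambda_K$ generalizing $LOG^+$, defined by the same procedure as $\Wan^{\sharp/\flat}$ (extracting the $b_{n,m}^{\sharp/\flat}$-coefficient at each layer), and verify the identity $\calM^\bullet(\loc_\gp\Delta_\bullet)=L_p^{\forall 0}\cdot H$ for some $H\in\Frac\Lambda_\dg$.

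The second step would be to identify $\calM^\bullet$ with $\Wan^\bullet$, after first verifying that $\loc_\gp\Delta_\bullet$ indeed lies in $H^1_\bullet=\ker\col^\bullet_\gp$; this lies-in-the-kernel statement follows from the $\Log(X)$-factorization of Proposition~\ref{centralprop} together with the dual description of $\Wan^{\sharp/\flat}$ via the basis $b_{n,m}^{\sharp/\flat}$. Once the identification $\calM^\bullet=\Wan^\bullet$ on $H^1_\bullet$ is made, the integral correction $h$ enters naturally because $(\Delta_\sharp,\Delta_\flat)$ was obtained from $(\Delta_\alpha,\Delta_\beta)$ by multiplying by $h$ and then dividing by $\Log(X)$; combining the two steps therefore yields $\Wan^\bullet(\Delta_\bullet)=L_p^{\forall 0}\cdot h\cdot H$ as claimed.

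The main obstacle is the reciprocity law itself: in \cite{wan} this relies crucially on Pollack's $\pm$-decomposition, which is unavailable when $a_p\neq 0$. I would instead use the $\sharp/\flat$-decomposition of the local cohomology at $\gp$ from subsection~\ref{subsec:twovariable} and prove the reciprocity by pairing both sides directly against $\dg$, using the Loeffler--Zerbes zeta map of \cite{lzcoleman} together with the integrality properties of the $\sharp/\flat$-Coleman maps. The element $H\in\Frac\Lambda_\dg$ is expected to lie only in the fraction field rather than in $\Lambda_\dg$ itself, reflecting a period mismatch between the Hida family and the Iwasawa algebra; tracking this denominator carefully will be the other delicate point.
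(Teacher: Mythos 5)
Your outline is, in its skeleton, the route the paper takes: the identity is deduced from the Loeffler--Zerbes explicit reciprocity law at $\gp$ applied to $(\Delta_\alpha,\Delta_\beta)$, transported through the factorization $h\,(\Delta_\alpha,\Delta_\beta)=(\Delta_\sharp,\Delta_\flat)\Log(X)$, and the factor $H\in\Frac\Lambda_\dg$ absorbs exactly the normalization mismatch you anticipate (in the paper, $H=H'H''$ with $H''$ coming from the different integral structures $v^+$ at $\gp$ and $cv^-$ at $\gq$). One point where your framing is off: the difficulty is not that "Wan's reciprocity law relies on Pollack's $\pm$-decomposition and must be re-proved by pairing against $\dg$." No new reciprocity law is proved at all. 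The paper uses the $\alpha/\beta$ law of \cite{lzcoleman} (Propositions 7.1.4 and 7.1.5) verbatim; the $\sharp/\flat$ structure enters only through linear algebra, namely the identity $(b^{\sharp}_{n,m},b^{\flat}_{n,m})\H_n=(c_{n,m},c_{n-1,m})$ and the group-ring computation $\sum_\sigma \log (x^{\sharp/\flat}_{n,m})^\sigma\phi(\sigma)=\bigl(\sum_\sigma \log (b^{\sharp/\flat}_{n,m})^\sigma\phi(\sigma)\bigr)\overline{\phi}(f^{\sharp/\flat}_{n,m})$, which converts the $b$-coordinates (i.e. the output of $\Wan^{\sharp/\flat}$) into statements about the trace-compatible points $c_{n,m}$.

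The genuine gap is that your central step, "verify the identity $\calM^\bullet(\loc_\gp\Delta_\bullet)=L_p^{\forall 0}\cdot H$," is the proposition itself (up to $h$) and is asserted rather than argued. The mechanism by which $L_p^{\forall0}$ enters is the matching of $g_n(\phi)=\sum_\sigma\log_{\Ehat}(c_{n,m})^\sigma\phi(\sigma)$, which involves the trace-compatible $d_m$ and hence the factor $F_{d,\gp}$ appearing in the interpolation of $L_p^{\forall0}$, against the special values produced by the reciprocity law at arithmetic points $\phi$. Even then one does not get a pointwise equality: at each $\phi$ one only obtains the dichotomy that either the relevant entry of $\phi\bigl(\sum\log\overrightarrow{b}\bigr)$ vanishes or $\overline{\phi}(f^\bullet)$ and $\phi(L_p^{\forall0})$ agree up to $h$ and $H'$, so one must argue (using non-triviality of $L_p^{\forall0}$ and Zariski density of the arithmetic points) that the second alternative occurs at infinitely many $\phi$, before concluding the identity in $\Frac\Lambda_K\tensor\Frac\Lambda_\dg$. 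Your proposal is compatible with this argument, and your preliminary step that $\loc_\gp\Delta_\bullet\in\ker\col^\bullet_\gp$ via the $\Log(X)$-factorization is correct (it is exactly how the paper justifies applying $\Wan^\bullet$), but without the $c_{n,m}$--$F_{d,\gp}$ comparison and the density argument the proof is not yet there.
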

\begin{proof} In this proof, we let $\sum$ denote $\sum_{\sigma_\in\Gamma_{n,\gq}\times U_m},$ where $U_m=U/p^mU$. Denote by $x_{n,m}^{\sharp/\flat}$ the image of $\Delta_{\sharp/\flat}$ in $H^1_{\sharp/\flat}(k_{n,m},T)$. Note that we have \footnote{This is a direct calculation on group-like elements. Note that in the corresponding \cite[Proposition 6.11]{wan}, the bar is missing.} $$\sum \log{x_{n,m}^{\sharp/\flat}}^\sigma \phi(\sigma)=\left(\sum \log  {b_{n,m}^{\sharp/\flat}}^\sigma \phi(\sigma)\right)\overline{\phi}(f_{n,m}^{\sharp/\flat}).$$
We also put $$g_n(\phi):= \sum \log_{\hat E}(c_{n,m})^\sigma \phi(\sigma)=\tau(\phi|_{\Gamma_n})\phi(u)\times \sum_{\upsilon \in U_m} \phi(\upsilon) d_m^\upsilon$$
for a generator $\upsilon \in U_m$.
For convenience, we abbreviate $\sum \left(\log {b_{n,m}^{\sharp}}^\sigma \phi(\sigma),\log {b_{n,m}^{\flat}}^\sigma \phi(\sigma)\right)$ by $\phi(\sum \log \overrightarrow{b})$. 
By combining \cite[Proposition 7.1.4 and Proposition 7.1.5]{lzcoleman} (cf. also \cite[Proposition 7.6]{wan}) with the fact that
$$\phi\left(\sum \log \overrightarrow{b}\right) \H_n(\phi(\gamma)) \times \phi(L_p^{\forall0}) = \sum \left(\log_{\hat E} (c_{n,m})^\sigma, \log_{\hat E}(c_{n-1,m}^\sigma)\right)\phi(\sigma)\times \phi(L_p^{\forall0}),$$ we conclude that at any such $\phi$, there is an element $H' \in \Frac \Lambda_\dg$ so that
$$\phi\left(\frac{-1}{H'}\right)\phi\left(\sum \log\overrightarrow{b}\right)\links \overline{\phi}(f^\sharp) g_n - \phi(L_p^{\forall0}) & 0 \\ 0 & \overline{\phi} (f^\flat) g_n - \phi(L_p^{\forall0}) \rechts \H_n(\phi(\gamma))=(0,0),$$
so that either the first (second) entry of $\phi \left(\sum \log (\overrightarrow b)\right)$ is zero, or the difference between the terms in the first (second) diagonal entry $\overline{\phi}(f^{\sharp/\flat})-\phi(L_p^{\forall0})$ is. Thus, we can interpolate at the $i$th component of the collection of vectors $\phi\left(\sum\log \overrightarrow{b}\right)$ as $n$ and $m$ vary, and encounter infinitely many non-zero values. 
The non-triviality of $L_p^{\forall0}$ allows us to conclude that $f^\bullet$ and $L_p^{\forall0}$ agree at infinitely many points up to the elements $h$ and $H'$. Now when giving $M(\dg)*$ a $\Lambda_\dg$-integral structure, we use different normalizations at $\gp$ and $\gq$ (corresponding to the basis elements $v^+$ and $cv^-$ at the top of \cite[page 39]{wan}), so we are off by another factor $H''\in\Frac\Lambda_\dg$. We let $H:=H'H''$.
\end{proof}



\begin{remark}If one of $\Delta_\sharp$ or $\Delta_\flat$ were zero, then this would imply that the $c_{n,m}^{\sharp/\flat}$ and $c_{n-1,m}^{\sharp/\flat}$ would be linearly dependent, much in the spirit of the ordinary case, in which they are norm-compatible. But this would mean that the arithmetic information of $E$ would be encoded in just one Iwasawa function. Thus, the $a_p\neq0$ case has a bit of an ordinary flavor.
\end{remark}

\begin{definition}We call any height one prime ideal that is a pullback to $\Lambda_K$ of a height one prime ideal of $\Z_p[[\Gal(K_\infty/K^{cyc})]]$ a \textit{strong} prime (ideal).
\end{definition}

\begin{theorem}\label{equivalence}The Greenberg-type main conjecture, i.e. full equality in Theorem \ref{wanstheorem}, is equivalent to the $\sharp/\flat$-Beilinson-Flach main conjecture Conjecture \ref{beilinsonflach}, up to powers of $p$ and strong primes. The same can be said about each of the two halves (i.e. inclusion in just one direction between the algebraic and analytic sides) of the main conjectures.
\end{theorem}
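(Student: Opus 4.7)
The plan is to derive the equivalence from a four-term exact sequence of $\Lambda_K$-modules
$$0 \to \X^{\hat\bullet\forall}/\Delta_\bullet\Lambda_K \to \Lambda_K/(L_p^{\forall0}) \to \X^{\forall0} \to \X^{\bullet0} \to 0,$$
valid up to powers of $p$ and strong primes, exactly as hinted at in the introduction. Once this sequence is in place, the multiplicativity of characteristic ideals in an exact sequence yields
$$\Char(\X^{\hat\bullet\forall}/\Delta_\bullet)\cdot\Char(\X^{\forall0})=(L_p^{\forall0})\cdot\Char(\X^{\bullet0}),$$
from which the equivalence of the two full main conjectures, and of each of their respective halves (the equivalence of the $\supseteq$-inclusions as well as of the $\subseteq$-inclusions), follows immediately.

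First I would set up the rightmost surjection $\X^{\forall0}\twoheadrightarrow\X^{\bullet0}$. Since the $\bullet$-local condition at $\gp$ is strictly more restrictive than the $\forall$-condition (which allows everything), one has $\Sel^{\bullet0}\subseteq\Sel^{\forall0}$, and Pontryagin duality reverses this to give the surjection on duals, whose kernel is a local quotient at $\gp$. I would then apply a Poitou--Tate diagram chase in the spirit of \cite[Section 6]{kurihara} to identify this local quotient with $\Lambda_K$ modulo the image of the composition
$$\X^{\hat\bullet\forall}\;\longrightarrow\; H^1_\bullet(K_\gp,\cdot)\;\overset{\Wan^\bullet}{\longrightarrow}\;\Lambda_K,$$
where the first arrow is the localization at $\gp$ (whose image lands in $\ker\col^\bullet_\gp=H^1_\bullet$, where the Wan map is defined), and where the identification of the target with $\Lambda_K$ uses Proposition \ref{freeness} stating that $\X^{\hat\bullet\forall}$ is $\Lambda_K$-free of rank one.

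Finally, I would quotient by the Beilinson--Flach element $\Delta_\bullet$ on the left and match its image on the right with the ideal $(L_p^{\forall0})$ via Proposition \ref{wantol}, which yields $\Wan^\bullet(\Delta_\bullet)=L_p^{\forall0}\cdot h\cdot H$. The correction factors $h\in\Z_p[[X,U]]$ and $H\in\Frac\Lambda_\dg$ contribute only powers of $p$ and strong primes: indeed $H$ lives in $\Frac\Lambda_\dg$, which is anticyclotomic and comes from $\Gal(K_\infty/K^{cyc})$, which is precisely the defining condition of strong primes.

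The main obstacle will be setting up the Poitou--Tate diagram cleanly enough that the $\gp$-local quotient is realized as $\Lambda_K$ modulo the image of $\Wan^\bullet$; in particular, one must check that the arrow arising from global-to-local at $\gp$ composed with local Tate duality is exactly $\Wan^\bullet$ after using the description of $\ker\col^\bullet_\gp$ as the Wan map domain. Equally, one must track the factors $h$ and $H$ to verify that they stay within the permitted defect of powers of $p$ and strong primes, amounting to checking that $H$ is confined to the anticyclotomic subring and that $h$ introduces only such accountable primes.
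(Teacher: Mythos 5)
Your proposal follows essentially the same route as the paper: the Poitou--Tate four-term exact sequence $0\to\X^{\hat\bullet\forall}\to H^1_\bullet(K_\gp,T\tensor\Lambda_K(-\Psi))\to\X^{\forall0}\to\X^{\bullet0}\to0$, quotienting by $\Delta_\bullet$, identifying the resulting middle term as $\Lambda_K$ (tensor $\Q_p$) modulo $\Wan^\bullet(\Delta_\bullet)=L_p^{\forall0}\cdot h\cdot H$ via Proposition \ref{wantol}, and concluding by multiplicativity of characteristic ideals, with $h$ and $H$ accounting exactly for the powers of $p$ and strong primes. The only small slip is your appeal to Proposition \ref{freeness} for identifying the $\gp$-local term with $\Lambda_K$: that identification comes from $H^1_\bullet=\ker\col^\bullet_\gp$ being free of rank one (generated by the $b^\bullet_{n,m}$, with $\Wan^\bullet$ the coordinate map), while Proposition \ref{freeness} concerns $\X^{\hat\bullet\forall}$ and enters only in the injectivity/quotient step.
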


\begin{proof}We have that $(\Delta_\sharp,\Delta_\flat)\in H^1_\sharp\ \directsum H^1_\flat= \ker \col_\gp^\sharp \directsum \ker \col_\gp^\flat,$ since the image of $(\Delta_\alpha,\Delta_\beta)=(\Delta_\sharp,\Delta_\flat)\Log$ at any $\phi\in \Gamma_{n,\gp}\times U_m$ is in the kernel of the pairing $\left(P_{{n,m},c_{n,m}},P_{{n,m},c_{n-1,m}}\right)$ (cf. Proposition \ref{Colemanmapexistence}). We use a four-term exact sequence that follows from the Poitou-Tate exact sequence

$$0 \longrightarrow \X^{\hat\bullet\forall} \longrightarrow H^1_\bullet(K_\gp, T\tensor \Lambda_K(-\Psi))\longrightarrow \X^{\forall0}\longrightarrow \X^{\bullet0}\longrightarrow 0,$$

where the injectivity of the map $\X^{\hat\bullet\forall} \longrightarrow H^1_\bullet(K_\gp, T\tensor \Lambda_K(-\Psi))$ follows from the nontrivially of $\Wan^\bullet$: By our choice of $\bullet\in \{\sharp,\flat\}$, we have $\col^\bullet(\Delta_\bullet)\neq0$, so that $H^1_\bullet\neq0$ as well. We then have an exact sequence

$$0 \longrightarrow \X^{\hat\bullet\forall}/\Delta_\bullet \longrightarrow \Lambda_K\tensor \Q_p/\Wan^\bullet(\Delta_\bullet)\longrightarrow \X^{\forall0}\longrightarrow \X^{\bullet0}\longrightarrow 0.$$

Since characteristic ideals are multiplicative in exact sequences, we conclude that the main conjectures are equivalent to each other up to strong primes, the discrepancy between $\Wan^\bullet(\Delta_\bullet)$ and $L_p^{\forall0}$ (i.e. the factor $H$ in Proposition \ref{wantol}).\end{proof}


\subsection{The $\sharp/\flat$-Beilinson-Flach  conjectures are equivalent to the $\sharp/\flat$-$\sharp/\flat$   conjectures}


\begin{proposition}Let $h \in\Q_p\tensor \Z_p[[X,U]]$ be the element from Proposition \ref{centralprop}. Then there exists a $c\in \Q_p^\times$ so that
$$\smat{\col_\gq^\sharp(\Delta_\sharp) & \col_\gq^\sharp(\Delta_\flat)\\ \col_\gq^\flat(\Delta_\sharp) & \col_\gq^\flat(\Delta_\flat)}=h\cdot c\cdot\smat{L_{\sharp\sharp} & L_{\flat\sharp}\\ L_{\sharp\flat}&L_{\flat\flat}}.$$
\end{proposition}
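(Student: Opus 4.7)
The plan is to transport Lei's two-variable factorisation through both the Loeffler--Zerbes maps $\calL_\alpha,\calL_\beta$ (acting on $\gq$-local cohomology) and the integral correction $h$ of Proposition \ref{centralprop} (which acts on the $\gp$-variable), then cancel the $\Log$-matrices on either side.

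\textbf{Step 1: factorisation of $\calL_\alpha, \calL_\beta$ through $\col^{\sharp/\flat}_\gq$.} I first prove that on $H^1(K_\gq, T\tensor\Lambda_K(\Psi))\tensor D_K^{\frac12,\frac12}$ one has
$$\hidari \calL_\alpha \\ \calL_\beta \migi = c \cdot \Log(Y)^T\hidari \col^\sharp_\gq \\ \col^\flat_\gq \migi$$
for some $c\in\Q_p^\times$. This is the local-at-$\gq$ analogue of the cyclotomic identity $(L_\alpha,L_\beta)=(L^\sharp,L^\flat)\Log(X)$ from Subsection \ref{subsec:sharpflat}. The two sides agree on the interpolation data along the $\gq$-variable, and the integral Coleman maps produced by Proposition \ref{Colemanmapexistence} are the unique pair of $\Lambda$-valued maps whose product with $\Log(Y)$ restores the Amice--V\'elu/Vi\v sik growth $O(\log_p^{1/2})$ (Lemma \ref{growthofLog}). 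The only remaining freedom is the scalar $c$, which accounts for the N\'eron-period/basis discrepancy between the normalisations of \cite{klz,lei} and our integral setup.

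\textbf{Step 2: substitute into Lei's theorem.} Plugging Step 1 into Theorem \ref{lei} yields
$$\Log(Y)^T\goodLs\Log(X)=\badLs=\hidari\calL_\alpha \\ \calL_\beta\migi\circ (\Delta_\alpha,\Delta_\beta)=c\cdot \Log(Y)^T\hidari\col^\sharp_\gq \\ \col^\flat_\gq\migi\circ(\Delta_\alpha,\Delta_\beta).$$
Because $\det\Log(Y)\neq 0$ (Lemma \ref{growthofLog} combined with the non-triviality of any single Amice--V\'elu $L$-function forces $\Log(Y)$ to be invertible over $\Frac\Lambda_K$), I may cancel $\Log(Y)^T$ on the left to obtain
$$\hidari\col^\sharp_\gq \\ \col^\flat_\gq\migi\circ(\Delta_\alpha,\Delta_\beta)=c^{-1}\cdot \goodLs\cdot\Log(X).$$

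\textbf{Step 3: transfer to $(\Delta_\sharp,\Delta_\flat)$ and cancel $\Log(X)$.} Proposition \ref{centralprop} reads $h\cdot(\Delta_\alpha,\Delta_\beta)=(\Delta_\sharp,\Delta_\flat)\Log(X)$. Applying the $\Lambda_K$-linear maps $\col^{\sharp/\flat}_\gq$ to both sides and combining with Step 2 gives
$$\hidari\col^\sharp_\gq \\ \col^\flat_\gq\migi\circ(\Delta_\sharp,\Delta_\flat)\cdot\Log(X) = h\cdot \hidari\col^\sharp_\gq \\ \col^\flat_\gq\migi\circ(\Delta_\alpha,\Delta_\beta) = c^{-1}\cdot h\cdot\goodLs\cdot\Log(X).$$
Cancelling $\Log(X)$ (again invertible over $\Frac\Lambda_K$) delivers
$$\hidari\col^\sharp_\gq \\ \col^\flat_\gq\migi\circ(\Delta_\sharp,\Delta_\flat)=c^{-1}\cdot h\cdot\goodLs,$$
which is precisely the claim with constant $c^{-1}\in\Q_p^\times$.

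\textbf{Main obstacle.} The delicate step is Step 1: identifying the distribution-theoretic Loeffler--Zerbes maps with the integral Coleman maps after twisting by $\Log(Y)$, up to a single scalar. This requires carefully tracking the N\'eron-period normalisation, the chosen dual basis $(v_1,v_2)$, and the Perrin-Riou-type regulator implicit in $\calL_\alpha,\calL_\beta$. The rigidity supplied by the Weierstrass-style uniqueness underpinning $\col^{\sharp/\flat}_\gq$ (Proposition \ref{Colemanmapexistence}, Lemma \ref{cutelemma}) together with the growth bound (Lemma \ref{growthofLog}) forces the factorisation to take exactly this shape; after that, Steps 2 and 3 reduce to matrix algebra in $\Lambda_K\tensor\Q_p$.
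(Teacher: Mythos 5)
Your global skeleton --- sandwich the desired identity between $\Log(Y)^T$ and $\Log(X)$, feed in Lei's factorisation and the relation $h\cdot(\Delta_\alpha,\Delta_\beta)=(\Delta_\sharp,\Delta_\flat)\Log(X)$, then cancel the $\Log$-matrices --- is exactly the paper's. (Minor quibble: the cancellation is legitimate because $\det\Log$ is a nonzero element of the distribution algebra, not because $\Log$ is "invertible over $\Frac\Lambda_K$"; its entries are order-$\tfrac12$ distributions, not elements of $\Lambda_K$.) The genuine problem is Step 1, which you rightly flag as the delicate point but then do not prove. After the purely formal manipulations of Steps 2--3, the asserted identity $\smat{\calL_\alpha\\ \calL_\beta}=c\,\Log(Y)^T\smat{\col^\sharp_\gq\\ \col^\flat_\gq}$ is equivalent to the proposition itself, and the justification offered is circular: uniqueness of the factorisation $(L_\alpha,L_\beta)=(L^\sharp,L^\flat)\Log$ for a \emph{given} pair of $O(\log_p^{1/2})$ power series says nothing about whether the \emph{maps} $\calL_{\alpha/\beta}$ (built from the Perrin-Riou regulator and the Bloch--Kato dual exponential) factor through the \emph{maps} $\col^{\sharp/\flat}_\gq$ (built from cup product against the formal-group points $c_{n,m}$ of Lemma \ref{tracecompatibility}). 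These are two a priori unrelated constructions, and "the two sides agree on the interpolation data" is precisely the statement that must be established, not a premise one may assume.

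The paper supplies the missing comparison concretely: it evaluates at finite-order characters $\phi$ of $\Gamma_{n,\gq}\times U_m$, uses the identity $P_{n,m,x}(z)=\left(\sum_\sigma\log_\gp(x^\sigma)\sigma\right)\left(\sum_\sigma\exp^*_\gq(z^\sigma)\sigma^{-1}\right)$ to rewrite the Coleman-map side in terms of $\exp^*_\gq$ of the Beilinson--Flach classes, and then invokes the explicit reciprocity law of Loeffler--Zerbes (\cite[Theorems 7.1.4 and 7.1.5]{lzcoleman}) to identify that expression with $\phi$ applied to the Haran/Loeffler $p$-adic $L$-functions, up to the normalisation constant $c$ (Wan's $H_1$). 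Only after this pointwise agreement does the $O(\log_p^{1/2})$ growth bound upgrade the equality at cyclotomic points to an identity of power series. Your Step 1 needs this reciprocity-law computation inserted; relatedly, the fact that the discrepancy is a single scalar $c\in\Q_p^\times$ (rather than, say, a unit power series) is not forced by "rigidity" but is read off from the period and basis normalisations appearing in that law.
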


\begin{proof} It suffices to show that
\begin{flalign}\label{colvalues}\Log(Y)^T\smat{\col_\gq^\sharp(\Delta_\sharp) & \col_\gq^\sharp(\Delta_\flat)\\ \col_\gq^\flat(\Delta_\sharp) & \col_\gq^\flat(\Delta_\flat)}\Log(X)&=\Log(Y)^T\times h \times \goodLs \Log(X)\\
&=\badLs\end{flalign}

Since the entries of $\Log()$ are $\O(\log^{1\over 2})$, we need to prove that (\ref{colvalues}) holds at all cyclotomic points in the variables $X$ and $Y$, i.e. at $X=\zeta_{p^n}-1$ and $Y=\zeta_{p^{n'}}-1$. 

For evaulation at $Y=\zeta_{p^{n'}}-1$, we can relate $\Log(\zeta_{p^{n'}}-1)^T\left.\smat{\col_\gq^\sharp(\Delta_\sharp) & \col_\gq^\sharp(\Delta_\flat)\\ \col_\gq^\flat(\Delta_\sharp) & \col_\gq^\flat(\Delta_\flat)}\right|_{Y=\zeta_{p^{n'}}-1}$ to the pairing $P_n^{(1 \text{ or } 0)}(\Delta_{\sharp/\flat})$ by the proof of \cite[Proposition 6.5]{shuron}. Using the relation $$(\Delta_\alpha,\Delta_\beta)=(\Delta_\sharp,\Delta_\flat)\Log(X),$$ the claim then follows from
 Proposition \ref{Colemanmapexistence}, the identity
$$P_{n,m,x}(z)=\left(\sum_{\sigma\in\Gamma_n\times U_m}\log_\gp(x^\sigma)\sigma\right)\left(\sum_{\sigma\in\Gamma_n\times U_m}\exp^*_\gq(z^\sigma)\sigma^{-1}\right) $$
and the explicit reciprocity law of Loeffler and Zerbes (\cite[Theorem 7.1.4 and Theorem 7.1.5]{lzcoleman}), which says that up to a constant in $\overline{\Q}_p^\times$,
$$\pi_{\calF_\dg^+}\hidari\pi_\alpha \\ \pi_\beta\migi\circ\exp^*\circ\phi\circ\left(\Delta_\alpha,\Delta_\beta\right)=\epsilon(\chi_{\phi}^{-1})
\smat{ \alpha^{-n}\eta_{f_\alpha}^*\tensor \phi(\omega_\dg^*)& 0 \\ 0 & \beta^{-n}\eta_{f_\beta}^*\tensor \phi(\omega_\dg^*)}\links\phi(L_{\alpha,\alpha}) & \phi(L_{\beta,\alpha})\\\phi(L_{\alpha,\beta}) & \phi(L_{\beta,\beta})\rechts,$$
where $\phi$ corresponds to a primitive character of $\Gamma_{n,\gq}\times U_m$ with $n>0$. Here, $\eta_{f_\alpha}^*$ and $\eta_{f_\beta}$ are the projections along the $\alpha$- and $\beta$- eigencomponents of $\eta_f^*$, which are the duals of the element used in \cite{klz}. Similarly, we $\omega_\dg^*)$ is the dual of the corresponding element in \cite{klz}, cf. \cite[Discussion in the middle of page 37]{wan} . Finally, we put
$\epsilon(\chi_{\phi}^{-1})=\int_{\Q_p^\times}\chi_{\phi}^{-1}(x^{-1})\lambda(x)dx,$ where $\lambda$ is (any) additive character so that $\ker(\lambda)=\Z_p$ and $\lambda\left(\frac{1}{p^n}\right)=\zeta_{p^n}$, and $\chi_\phi$ denotes composition of $\Psi$ and $\phi$.

The constant $c\in \Q_p^\times$ is chosen as in \cite[Corollary 7.5]{wan}, where it is denoted $H_1$.

\end{proof}

\begin{theorem}\label{sfbf}The $\sharp/\flat$-$\sharp/\flat$ main conjecture (Conjecture \ref{sharpflatandsharpflat}) is equivalent to the $\sharp/\flat$-Beilinson-Flach main conjecture, Conjecture \ref{beilinsonflach} up to strong primes.
\end{theorem}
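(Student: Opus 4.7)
The strategy parallels Theorem~\ref{equivalence}, exchanging the roles of $\gp$ and $\gq$. Whereas that theorem uses the Wan maps at $\gp$ to link the Beilinson--Flach element to the Greenberg-side $p$-adic $L$-function $L_p^{\forall 0}$, here we use the Coleman maps $\col^\tau_\gq$ at $\gq$ to link it to Lei's $p$-adic $L$-functions $L^{\bullet\tau}$, by virtue of the identity $\col^\tau_\gq(\Delta_\bullet) = h\cdot c\cdot L^{\bullet\tau}$ established in the proposition just proved.

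The first step is to derive, via the Poitou--Tate long exact sequence, a four-term exact sequence of $\Lambda_K$-modules of the form
$$0 \to \X^{\hat\bullet\forall} \to H^1(K_\gq,T\tensor\Lambda_K(-\Psi))/H^1_\tau \to \X^{\bullet\tau} \to \X^{\bullet 0} \to 0,$$
obtained by changing the local condition at $\gq$ from $0$ (used in $\X^{\bullet 0}$) to $E^\tau$ (used in $\X^{\bullet\tau}$), exactly as Theorem~\ref{equivalence} changes the condition at $\gp$ from $\bullet$ to $\forall$. Injectivity of the leftmost map rests on the rank-one freeness of $\X^{\hat\bullet\forall}$ (Proposition~\ref{freeness}) together with the nonvanishing of $\col^\tau_\gq$ on $\X^{\hat\bullet\forall}$; the latter follows from Choice~\ref{assumption} and the Coleman--$L$-function identity above.

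Next, I would identify the middle term with $\Lambda_K$ via $\col^\tau_\gq$, then quotient the sequence by $\Delta_\bullet\in\X^{\hat\bullet\forall}$, whose image in the middle term is $h\cdot c\cdot L^{\bullet\tau}$. Up to strong primes and the controllable factors $h$ and $c$, this produces
$$0 \to \X^{\hat\bullet\forall}/\Delta_\bullet\Lambda_K \to \Lambda_K/(L^{\bullet\tau}) \to \X^{\bullet\tau} \to \X^{\bullet 0} \to 0.$$
Multiplicativity of characteristic ideals in this sequence then yields, up to strong primes,
$$\Char(\X^{\hat\bullet\forall}/\Delta_\bullet\Lambda_K)\cdot\Char(\X^{\bullet\tau}) = (L^{\bullet\tau})\cdot\Char(\X^{\bullet 0}),$$
from which the equivalence is immediate: the $\sharp/\flat$-Beilinson--Flach main conjecture $\Char(\X^{\hat\bullet\forall}/\Delta_\bullet\Lambda_K) = \Char(\X^{\bullet 0})$ holds if and only if the $\sharp/\flat$-$\sharp/\flat$ main conjecture $\Char(\X^{\bullet\tau}) = (L^{\bullet\tau})$ holds, for each $\tau\in\{\sharp,\flat\}$. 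The remaining two $\sharp/\flat$-$\sharp/\flat$ main conjectures (with the opposite sign in the first slot) follow by a symmetric argument using the alternative version of Choice~\ref{assumption}, and the half-inclusion version of the statement follows by reading the same multiplicativity argument one inclusion at a time.

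The main technical obstacle will be the first step: carefully establishing the Poitou--Tate four-term exact sequence with the correct local conditions at $\gp$ and $\gq$ for the representation $T\tensor\Lambda_K(-\Psi)$, and verifying that the auxiliary factors ($h$, $c$, and any constants arising in the identification of the middle term with $\Lambda_K$) can be absorbed into the strong-prime indeterminacy. Once this sequence is in hand, the remaining multiplicativity arguments mirror those of Theorem~\ref{equivalence} essentially verbatim.
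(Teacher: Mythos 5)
Your proposal follows essentially the same route as the paper: the Poitou--Tate four-term exact sequence with middle term $H^1(K_\gq,T\tensor\Lambda_K(-\Psi))/H^1_\bullet$, injectivity from Proposition~\ref{freeness} plus nonvanishing of $\col^\bullet_\gq(\Delta_\bullet)$, quotienting by $\Delta_\bullet$ to get $0\to\X^{\hat\bullet\forall}/\Delta_\bullet\to\Lambda_K/\col^\bullet(\Delta_\bullet)\to\X^{\bullet\bullet}\to\X^{\bullet0}\to0$, and multiplicativity of characteristic ideals. The only cosmetic difference is that you treat all four sign combinations $\X^{\bullet\tau}$ whereas the paper records only the diagonal case $\X^{\bullet\bullet}$, which is the one used later.
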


\begin{proof}From the Poitou--Tate sequence, we have 
$$\X^{\hat\bullet\forall}\longrightarrow \frac{H^1(K_\gq,T\tensor \Lambda_K(-\Psi))}{H^1_\bullet(K_\gq,T\tensor\Lambda_K(-\Psi))}\longrightarrow \X^{\bullet\bullet}\longrightarrow\X^{\bullet0}\longrightarrow 0.$$
Recall we have chosen $K$ so that neither $\col^\bullet$ nor $\Delta_\bullet$ are zero. Since we thus have $\col^\bullet(\Delta_\bullet)\neq0$ and $\X^{\hat\bullet\forall}$ is free of rank $1$ (cf. Proposition \ref{freeness}), the first map in the above exact sequence being injective as a $\Lambda_K$-module is equivalent to its non-triviality.

thus, taking quotients by the signed Beilinson--Flach elements rec. their images, we have the following exact sequence:

$$ 0 \longrightarrow \X^{\hat\bullet\forall}/\Delta_\bullet\longrightarrow \Lambda_K/ \col^\bullet(\Delta_\bullet) \longrightarrow \X^{\bullet\bullet}\longrightarrow \X^{\bullet0}\longrightarrow 0$$
Now by multiplicativity of characteristic ideals, we get the desired equivalence of main conjectures.
\end{proof}

\subsection{Completing the argument}
We want to carry over the inclusion of Theorem \ref{wanstheorem}. First,  we have the following consequence of Theorems \ref{equivalence} and \ref{sfbf}:

\begin{proposition}Under the assumptions of Theorem \ref{wanstheorem}, we have $\text{length}_P\X^{\bullet\bullet}\geq \ord_P L^{\bullet\bullet}$ for any height one prime $P$ that is neither $(p)$ nor the pullback of the augmentation ideal of $\Z_p[[K_\infty/K_{cyc}]]$. 
\end{proposition}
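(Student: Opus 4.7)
The plan is to transport Wan's one-sided inclusion (Theorem \ref{wanstheorem}) through the two equivalences established in this section. By Theorem \ref{wanstheorem}, $\Char(\X^{\forall 0})\subseteq (L_p^{\forall 0})$ as ideals of $\Lambda_K\otimes\Q_p$. I will feed this into the four-term Poitou--Tate exact sequence
\[0\to\X^{\hat\bullet\forall}/\Delta_\bullet\to\Lambda_K\otimes\Q_p/\Wan^\bullet(\Delta_\bullet)\to\X^{\forall 0}\to\X^{\bullet 0}\to 0\]
from the proof of Theorem \ref{equivalence}, together with multiplicativity of characteristic ideals and the identity $\Wan^\bullet(\Delta_\bullet)=L_p^{\forall 0}\cdot h\cdot H$ of Proposition \ref{wantol}. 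The upshot is $\Char(\X^{\hat\bullet\forall}/\Delta_\bullet\Lambda_K)\supseteq\Char(\X^{\bullet 0})$ up to powers of $p$ and the factor $H\in\Frac\Lambda_\dg$.

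Next I will feed this half-inclusion into the four-term exact sequence
\[0\to\X^{\hat\bullet\forall}/\Delta_\bullet\to\Lambda_K/\col_\gq^\bullet(\Delta_\bullet)\to\X^{\bullet\bullet}\to\X^{\bullet 0}\to 0\]
from the proof of Theorem \ref{sfbf}. Combined with the identity $\col_\gq^\bullet(\Delta_\bullet)\doteq h\cdot c\cdot L^{\bullet\bullet}$ coming from the explicit reciprocity law and multiplicativity of characteristic ideals in exact sequences, this gives $\Char(\X^{\bullet\bullet})\subseteq(L^{\bullet\bullet})$ up to powers of $p$ and strong primes. Reading the inclusion prime-by-prime then yields $\text{length}_P\X^{\bullet\bullet}\geq\ord_P L^{\bullet\bullet}$ at every height one prime $P$ which is neither $(p)$ nor one of the strong primes supporting the discrepancy.

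The hard part will be to pin down which strong primes can actually enter the discrepancy. The only two sources of error along the chain are the element $h\in\Z_p[[X,U]]$ chosen just before Corollary \ref{h} (a factor in the variable $X$ and in the unramified direction) and the element $H\in\Frac\Lambda_\dg$ of Proposition \ref{wantol}, which records the mismatch between the $\Lambda_\dg$-integral structures on $M(\dg)^*$ at $\gp$ and at $\gq$. Since $\Lambda_\dg\cong\Z_p[[T]]$ depends only on the anticyclotomic variable, $H$ is a ratio of elements of the anticyclotomic Iwasawa algebra, so its divisor lives on strong primes. A careful audit of the zero and pole locus of $h$ and $H$ should show that the only strong prime genuinely appearing is the pullback of the augmentation $(t)$ of $\Z_p[[K_\infty/K_{cyc}]]$; at every other strong prime $P$ the discrepancy is a unit and the inclusion remains sharp. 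Combined with the length inequality at non-strong primes coming directly from the equivalences, this yields the stated bound at all height one primes distinct from $(p)$ and $(t)$.
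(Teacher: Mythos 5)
Your overall strategy --- transporting Wan's inclusion $\Char(\X^{\forall0})\subseteq(L_p^{\forall0})$ through the two four-term Poitou--Tate sequences of Theorems \ref{equivalence} and \ref{sfbf}, using $\Wan^\bullet(\Delta_\bullet)=L_p^{\forall0}\cdot h\cdot H$ and $\col_\gq^\bullet(\Delta_\bullet)=h\cdot c\cdot L^{\bullet\bullet}$ together with multiplicativity of characteristic ideals --- is exactly the paper's chain of inequalities, and it does give $\text{length}_P\X^{\bullet\bullet}\geq\ord_P L^{\bullet\bullet}$ at every height one prime $P$ away from $(p)$ and the strong primes. The problem is precisely the step you yourself flag as ``the hard part'': at a strong prime $P$ the first exact sequence only yields
$$\text{length}_P\X^{\bullet0}-\text{length}_P\bigl(\X^{\hat\bullet\forall}/\Delta_\bullet\bigr)\;\geq\;-\ord_P(hH),$$
so you need $\ord_P(hH)\leq0$ there, i.e.\ you must control the zeros of $H\in\Frac\Lambda_\dg$ (and of $h$) at every strong prime other than $(t)$. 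You assert that ``a careful audit of the zero and pole locus of $h$ and $H$ should show'' that only the augmentation prime $(t)$ enters, but you do not carry this out, and nothing in the construction of $H$ (which records a mismatch of integral structures on $M(\dg)^*$ at $\gp$ versus $\gq$, via an interpolation argument at infinitely many characters) rules out zeros at other height one primes of $\Z_p[[T]]$. As stated, this is an unproven claim on which the whole proposition rests for strong primes.

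The paper closes this gap by a different mechanism: it does not try to show the discrepancy factors are units away from $(t)$, but instead invokes the discussion of Wan's Lemma 8.4 (which does not use $a_p=0$) to establish the relevant direction of the Greenberg-type main conjecture directly at the strong primes, and then runs the same chain
$$\text{length}_P\X^{\forall0}\geq\ord_P L_p^{\forall0}\geq\ord_P\Wan^\bullet(\Delta_\bullet)
\;\Longrightarrow\;
\text{length}_P\X^{\bullet0}\geq\text{length}_P\frac{\X^{\hat\bullet\forall}}{\Delta_\bullet}
\;\Longrightarrow\;
\text{length}_P\X^{\bullet\bullet}\geq\ord_P L^{\bullet\bullet}$$
at those primes. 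You should either import that input or actually perform the divisor computation you promise; without one of the two, the inequality at strong primes $P\neq(t)$ is not established.
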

\begin{proof} For the strong primes, note that the discussion of \cite[Lemma 8.4]{wan} applies since it does not assume $a_p=0$. We thus conclude that this direction of the Greenberg-type main conjecture is true at such primes. For the four term exact sequence in the proof of Theorems \ref{equivalence} and \ref{sfbf}, we thus have at any such strong prime $P$ that:

$$\text{length}_P \Char \X^{\forall0}\geq \ord_P L_p^{\forall0} \geq \ord_P\Wan^\bullet(\Delta_\bullet), $$

which by Theorem \ref{equivalence} implies that
$$ \text{length}_P \Char \X_{\bullet0}\geq \text{length} \frac{\X^{\hat\bullet \forall}}{\Delta_\bullet},$$

which in turn implies by Theorem \ref{sfbf} that
$$ \text{length}_P \Char \X^{\bullet\bullet} \geq \ord_P \col_\bullet(\Delta_\bullet)\geq \ord_P L^{\bullet\bullet}.$$
\end{proof}

This is The goal of this subsection is to take care of the prime $p$ and the exceptional prime ideals.

For the prime $p$, we first prove that the $\mu$-invariants of the $p$-adic $L$-functions $L^{*\circ}$ for $\circ,*\in\{\sharp,\flat\}$ are zero.
\begin{proposition}\label{muinvar}Suppose we have chosen the right $K$ and $\bullet\in\{\sharp,\flat\}$ as in Choice \ref{assumption} Suppose $N$ is a product of distinct unramified primes, an odd number of which are inert, so that for any such inert $q|N$, $\overline{\rho}|_{G_q}$ is ramified. Then $\ord_{(p)}L^{*\circ}=0$ for $\circ,*\in\{\sharp,\flat\}$ chosen so that $L^{*\circ}\neq0$.\end{proposition}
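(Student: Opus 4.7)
The strategy is to reduce the vanishing of $\mu(L^{*\circ})$ to a one-variable cyclotomic $\mu$-invariant statement via anticyclotomic specialization, and then handle the remaining entries by either symmetry or by tracking $\mu$-invariants through the matrix factorization by $\Log$. First I would prove the analytic counterpart of Proposition \ref{control}: namely, that the specialization of $L^{\bullet\bullet}(X,Y)$ along the anticyclotomic ideal $(t)$ equals, up to a unit in $\Lambda$, the product $L^\bullet(X)\cdot L^\bullet_K(X)$ of the one-variable cyclotomic supersingular $p$-adic $L$-functions of $E$ and of its quadratic twist $E^{(K)}$. This identity is forced by matching the interpolation formulas of Theorem \ref{lei} at characters that factor through the cyclotomic quotient of $\Gal(K_\infty/K)$, together with the $\Gal(\Q/\Q)$-decomposition $E/K \cong E/\Q \oplus E^{(K)}/\Q$ that underlies Proposition \ref{control}.

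Second, I would invoke a $\mu=0$ theorem for each of the cyclotomic supersingular $p$-adic $L$-functions $L^\bullet$ attached to $E$ and $E^{(K)}$ under the given conductor hypothesis. This is a generalization, to the $a_p\neq 0$ regime, of the vanishing results of Pollack, Kim, and Vatsal in the $a_p=0$ case. It can be obtained either from a Vatsal-style non-vanishing of modular symbols mod $p$ (adapted to the $\sharp/\flat$ decomposition of \cite{shuron}), or by combining Kato's divisibility (Theorem 1.6) with an algebraic $\mu=0$ statement for the signed Selmer groups in the spirit of Greenberg's conjecture; the odd-number-of-inert-primes hypothesis plus the residual ramification condition at each inert $q\mid N$ is precisely what guarantees the nonvanishing of the relevant mod-$p$ modular symbol / makes the Eisenstein-congruence input apply uniformly to both $E$ and $E^{(K)}$. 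Choice \ref{assumption} ensures that $L^\bullet$ and $L^\bullet_K$ are both nonzero, so the specialization of Step 1 is nonzero with $\mu$-invariant equal to the sum $\mu(L^\bullet)+\mu(L^\bullet_K)=0$, forcing $\mu(L^{\bullet\bullet})=0$.

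Third, for the other entries $L^{*\circ}$ of the matrix which happen to be nonzero, I would rerun Steps 1 and 2 after adjusting the distinguished parameter: replace $\bullet$ by $*$ (or $\circ$) and, if necessary, select a different imaginary quadratic field $K'$ in which Choice \ref{assumption} still holds. By the positive-proportion results cited just after Choice \ref{assumption}, whenever $L^{*}$ itself is nonzero such a $K'$ exists and the same argument applies. As a back-up I would also exploit the matrix identity
$$\smat{L_{\alpha,\alpha} & L_{\beta,\alpha} \\ L_{\alpha,\beta} & L_{\beta,\beta}} = \Log(Y)^T \smat{L^{\sharp\sharp} & L^{\flat\sharp} \\ L^{\sharp\flat} & L^{\flat\flat}} \Log(X),$$
noting that by the Growth Lemma the entries of $\Log$ are $O(\log_p^{1/2})$ distributions with no constant-function $(p)$-accumulation, so that a putative extra $p$-factor in any $L^{*\circ}$ would have to appear on the Haran--Loeffler side $L_{\xi,\eta}$, contradicting the mod-$p$ non-vanishing of Rohrlich special values that already enters Proposition 2.7.

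The main obstacle is clearly the $\mu=0$ input of Step 2. Even in the ordinary case the cyclotomic $\mu=0$ statement is a substantial theorem, and in the signed supersingular setting the passage from the Amice--V\'elu/Vi\v{s}ik $L_\alpha, L_\beta$ to the integral pair $(L^\sharp, L^\flat)$ proceeds by dividing out $\Log(X)$, a distributional operation that in principle could introduce spurious denominators or extra powers of $p$. Verifying that the conductor hypothesis is strong enough to prevent this, and to make the modular-symbol / Eisenstein-congruence input apply simultaneously to $E$ and to $E^{(K)}$, is the delicate point where the argument concentrates its work.
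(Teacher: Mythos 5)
Your overall skeleton (reduce to $L^{\bullet\bullet}$ by interpolation, then specialize the two-variable function to a line and quote a one-variable $\mu=0$ theorem) matches the paper's, but you specialize to the wrong line, and that is where the argument breaks. You restrict along the anticyclotomic ideal $(t)$ to the \emph{cyclotomic} line and then "invoke a $\mu=0$ theorem for the cyclotomic supersingular $p$-adic $L$-functions $L^\bullet$ and $L^\bullet_K$." No such theorem exists: the vanishing of the cyclotomic $\mu$-invariant for (signed) supersingular $p$-adic $L$-functions of elliptic curves with irreducible residual representation is an open conjecture, in the $a_p=0$ case as much as in the $a_p\neq 0$ case. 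Vatsal's non-vanishing results, and the Pollack--Weston theorem \cite[Theorem 2.5]{pollackweston} that you would need to generalize, are \emph{anticyclotomic} statements proved via Gross points on a definite quaternion algebra; they say nothing about cyclotomic twists. This is exactly why the proposition's hypothesis reads the way it does: a squarefree $N$ with an odd number of primes inert in $K$, each with $\overline{\rho}|_{G_q}$ ramified, is the definite-quaternion-algebra hypothesis of \cite{pollackweston} (relaxed via \cite{chanho}, cf. \cite[Proposition 8.6]{wan}). The paper accordingly specializes $L^{\bullet\bullet}$ to the \emph{anticyclotomic} line, constructs anticyclotomic $L^{\sharp/\flat}_{anti}$ by feeding the three-term-relation sequences $\{L_n\}$ into the $\sharp/\flat$-Coleman map formalism of \cite[Section 5]{shuron} (replacing Kobayashi's $\pm$-construction), gets $\mu(L_n)=0$ for $n\gg 0$ from the Pollack--Weston argument, and then converts this into $\mu(L^{\bullet}_{anti})=0$ via \cite{bernadette} and \cite[Corollary 8.9]{surprisingsha}.

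Your fallback argument also does not close the gap: Rohrlich's theorem gives non-vanishing of $L(E,\chi,1)$ for almost all $\chi$, hence non-vanishing of the $p$-adic $L$-functions as elements of $\Lambda_K\tensor\Q$, but it gives no control on the $p$-adic valuations of the normalized special values, so it cannot rule out a positive $\mu$-invariant. Likewise, the growth bound $O(\log_p^{1/2})$ on the entries of $\Log$ constrains $\lambda$-type accumulation, not powers of $p$. The correct fix is to replace your Step 2 entirely by the anticyclotomic specialization; your Step 1 identity $L^{\bullet\bullet}|_{t=0}=L^\bullet L^{(K)\bullet}$ is true (the paper proves it in Corollary \ref{gold}) but is used there for a different purpose, not for the $\mu$-invariant.
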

\begin{proof}By interpolation, it suffices to prove this for $L^{\bullet\bullet}$. As in \cite[Prop. 4.6]{wan}, this follows from the vanishing of the $\mu$-invariants for the specialization of $L^{\bullet\bullet}$ to the anticyclotomic line. Note that the assumptions of \cite{pollackweston} are not necessary in our case, cf. the discussion in \cite[Proposition 8.6]{wan} following a result of Chan-Ho Kim \cite{chanho}. In the case $a_p=0$, this follows from \cite[Theorem 2.5]{pollackweston}. In the case $a_p=0$, the anticyclotomic $p$-adic $L$-functions $L_{anti}^\pm$ were constructed by combining sequences $\{L_n\}_{n\geq1}$ satisfying certain three-term relations with Kobayashi's construction of the $\pm$-Coleman maps \cite[Section 8]{kobayashi}. The sequences $\{L_n\}$ have no $a_p=0$ assumption, so that we can combine their construction (word for word in the same way) with that of the $\sharp/\flat$-Coleman maps in \cite[Section 5]{shuron} to arrive at anticyclotomic $p$-adic $L$-functions $L_{anti}^{\sharp/\flat}$. The arguments in \cite[Proof of Theorem 2.5]{pollackweston} then show that $\mu(L_n)=0$ for $n \gg 0$. But \cite[discussion at the end of page 165]{bernadette} shows that the $\mu_\sharp/\mu_\flat$-invariants of Perrin-Riou are then $0$. By \cite[Corollary 8.9]{surprisingsha}, we conclude that the $\mu$-invariant of $L_{anti}^\bullet$ is zero.
\end{proof}

Recall that we denoted by $L^{(K)\bullet}$ the $\bullet$ $p$-adic $L$-function for the elliptic curve $E^{(K)}$.
\begin{corollary}\label{gold} Under the assumptions of Proposition \ref{muinvar} and the further assumption that the anticyclotomic root number is $-1$ (so that the restriction of $L^{\bullet\bullet}$ to the anticyclotomic line is nonzero), we have $$\Char(\X^\bullet)\Char(\X^\bullet_K)\subseteq(L^\bullet L^{(K)\bullet})$$.
\end{corollary}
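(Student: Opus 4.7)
The plan is to deduce the corollary by first upgrading the preceding proposition to an honest ideal containment $\Char(\X^{\bullet\bullet}) \subseteq (L^{\bullet\bullet})$ in $\Lambda_K$, and then reducing modulo the anticyclotomic variable $t$ via Proposition \ref{control} and the interpolation formula for $L^{\bullet\bullet}$.

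For the two-variable containment, the proposition preceding Proposition \ref{muinvar} gives $\text{length}_P \X^{\bullet\bullet} \geq \ord_P L^{\bullet\bullet}$ at every height-one prime $P$ other than $(p)$ and the pullback $(t)$ of the augmentation ideal of $\Z_p[[\Gal(K_\infty/K_{cyc})]]$. The root-number hypothesis is exactly what makes Proposition \ref{muinvar} applicable---its argument specializes $L^{\bullet\bullet}$ to the anticyclotomic line, and the hypothesis guarantees that restriction is nontrivial---so we obtain $\mu(L^{\bullet\bullet}) = 0$, and this handles $P = (p)$ trivially. For $P = (t)$ I would use Choice \ref{assumption}: since both $L^\bullet$ and $L^{(K)\bullet}$ are nonzero, the factorization $L(E/K,\chi,s) = L(E,\chi,s)\,L(E^{(K)},\chi,s)$ for cyclotomic characters, carried through the interpolation formulas, forces $L^{\bullet\bullet} \bmod t$ to agree with a unit multiple of $L^\bullet L^{(K)\bullet}$ and in particular be nonzero, so $(t) \nmid L^{\bullet\bullet}$. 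This yields $\Char(\X^{\bullet\bullet}) \subseteq (L^{\bullet\bullet})$.

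For the descent to the cyclotomic line, Proposition \ref{control} identifies $\X^{\bullet\bullet}/t\X^{\bullet\bullet}$ with the $\Lambda$-torsion module $\X^\bullet \oplus \X^\bullet_K$. Any $\Lambda_K/(t^a)$-summand in the pseudo-isomorphic elementary decomposition of $\X^{\bullet\bullet}$ would leave a free $\Lambda$-quotient after reducing modulo $t$, contradicting torsion-ness; hence $(t) \nmid \Char(\X^{\bullet\bullet})$. Both sides of the containment above being coprime to $t$, the standard control identity $\Char_\Lambda(M/tM) = \Char_{\Lambda_K}(M) \bmod t$ applies with $M = \X^{\bullet\bullet}$, identifying $\Char(\X^{\bullet\bullet}) \bmod t$ with $\Char(\X^\bullet)\Char(\X^\bullet_K)$. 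Reducing the previous step modulo $t$ then delivers the corollary: $\Char(\X^\bullet)\Char(\X^\bullet_K) \subseteq (L^\bullet L^{(K)\bullet})$.

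The main obstacle is making the interpolation identity $L^{\bullet\bullet}\bmod t \doteq L^\bullet \cdot L^{(K)\bullet}$ truly precise. At the level of special values this is immediate from the factorization above, but one must carefully match the period normalizations ($\Omega_E^\pm$ against $\Omega_E$ and $\Omega_{E^{(K)}}$), Gau\ss{}-sum factors, and the $\sharp/\flat$-type Euler factors $(1-1/\bullet)$ that appear in the interpolation formulas of the three $p$-adic $L$-functions, and verify that the implicit constant is genuinely a unit in $\Lambda$ rather than only a nonzero scalar.
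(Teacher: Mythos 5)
Your proposal is correct and follows essentially the same route as the paper: the two-variable divisibility at all height-one primes except $(p)$ and $(t)$ from the preceding proposition, $\mu=0$ at $(p)$ via Proposition \ref{muinvar}, nonvanishing at $(t)$ together with the factorization $L^{\bullet\bullet}|_{t=0}=L^\bullet L^{(K)\bullet}$ (which the paper obtains as an exact identity, with no unit ambiguity, by restricting Lei's matrix identity to the cyclotomic line and cancelling the $\Log$ matrices), and then descent through Proposition \ref{control}. The only caveat is that your ``control identity'' $\Char_\Lambda(M/tM)=\Char_{\Lambda_K}(M)\bmod t$ is in general only an inclusion $\Char_\Lambda(M/tM)\subseteq(\Char_{\Lambda_K}(M)\bmod t)$ because pseudo-null pieces of $M$ can contribute after reduction, but this is exactly the direction your deduction uses, so the argument stands.
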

\begin{proof}First, note that $L_{\bullet\bullet}|_{t=0}=L_\bullet L_\bullet^{(K)}$. This can be seen as follows:

From the interpolation property, we have that restriction to the cyclotomic line (setting ${X=Y}$) yields

$$\badLs=\hidari L_\alpha \\ L_\beta \migi \left( L_\alpha^{(K)}, L_\beta^{(K)}\right),$$ where $L_*^{(K)}$ is the $L$-function of $E$ by the quadratic twist by $K$.

But we know that $$\hidari L_\alpha \\ L_\beta \migi = \Log^T\hidari L_\sharp \\ L_\flat \migi$$ and $$ \left( L_\alpha^{(K)}, L_\beta^{(K)}\right)\Log.$$ Restricting $\Log(Y)^T\goodLs\Log(X)$  and factoring out the $\Log$ matrices then yields
$$\left.\goodLs\right|_{t=0}=\hidari L^\sharp \\ L^\flat \migi \left(L_\sharp^{(K)}, L_\flat^{(K)}\right).$$
No exceptional prime divides $L^{\bullet\bullet}$ since the pullback of the augmentation ideal of the anticyclotomic line does not include $L^{\bullet\bullet}$, which follows from the fact that its specialization to the cyclotomic line is not zero by Choice \ref{assumption}. The corollary up to the prime $p$ thus follows from combining Theorem \ref{wanstheorem} and the equivalences of main conjectures Theorem \ref{equivalence}, and for the correct power of $p$ we use Proposition \ref{muinvar}.
\end{proof}

\section{Conclusion}\label{sec:proofs}

We are now ready to prove our main theorem.

\begin{theorem}Let $E/\Q$ be an elliptic curve and $p>2$ a prime of supersingular reduction. Assume that $E$ has square-free conductor. Choose $\bullet\in\{\sharp,\flat\}$ so that $L^\bullet\neq0$. Then $L^\bullet$ is a characteristic power series of the Iwasawa module $\Hom(\Sel^\star(E/\Q_\infty),\Q_p/\Z_p)$.
\end{theorem}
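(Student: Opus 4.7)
The plan is short because by this point essentially all the machinery has been assembled: I would deduce the cyclotomic main conjecture for $E$ by playing the two-variable inclusion of Corollary \ref{gold} against the Kato-side inclusion recalled just after the statement of the cyclotomic main conjecture (i.e., \cite[Theorem 7.16]{shuron}).

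First, invoke Choice \ref{assumption} to fix an imaginary quadratic field $K$ and a sign $\bullet \in \{\sharp,\flat\}$ so that both $L^{\bullet}$ (attached to $E/\Q$) and $L^{(K)\bullet}$ (attached to the quadratic twist $E^{(K)}/\Q$) are nonzero. One simultaneously arranges that $K$ satisfies the hypotheses of Theorem \ref{wanstheorem} (absolute irreducibility of $E[p]|_{G_K}$, at least one prime divisor of $N$ inert in $K$, which is possible since $N$ is square-free) and those of Proposition \ref{muinvar} (an odd number of prime divisors of $N$ inert in $K$, so that the anticyclotomic root number is $-1$). The simultaneous existence of such a $K$ is guaranteed by the positive-proportion results invoked after Choice \ref{assumption}.

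Next, apply Corollary \ref{gold}, whose proof combines Theorem \ref{wanstheorem} with the equivalences of Theorems \ref{equivalence} and \ref{sfbf} and handles the $p$-part via Proposition \ref{muinvar}, to obtain
\[
\Char\bigl(\X^{\bullet}\bigr)\,\Char\bigl(\X^{\bullet}_K\bigr) \,\subseteq\, \bigl(L^{\bullet}\,L^{(K)\bullet}\bigr).
\]
In the opposite direction, applying \cite[Theorem 7.16]{shuron} separately to $E$ and to $E^{(K)}$ yields
\[
\Char\bigl(\X^{\bullet}\bigr) \,\supseteq\, (p^{n_1}L^{\bullet}), \qquad \Char\bigl(\X^{\bullet}_K\bigr) \,\supseteq\, (p^{n_2}L^{(K)\bullet})
\]
for some integers $n_1,n_2 \geq 0$. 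Multiplying these Kato inclusions and comparing with the upper bound from Corollary \ref{gold}, together with $\mu(L^{\bullet})=\mu(L^{(K)\bullet})=0$ from Proposition \ref{muinvar}, forces the total power of $p$ to be trivial, so $n_1=n_2=0$ and we obtain the equality
\[
\Char\bigl(\X^{\bullet}\bigr)\,\Char\bigl(\X^{\bullet}_K\bigr) \,=\, \bigl(L^{\bullet}\,L^{(K)\bullet}\bigr).
\]

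Finally, since $\Lambda=\Z_p[[X]]$ is a UFD and the factor-by-factor containments from the Kato inclusion are already in hand, equality of the products propagates to each factor, yielding $\Char(\X^{\bullet}) = (L^{\bullet})$, which is the desired main conjecture. The main obstacle — and the only substantive point outside bookkeeping at this final stage — is verifying the simultaneous existence of the auxiliary $K$ meeting all the above constraints and propagating the vanishing of $\mu$-invariants across the Kato inclusion; both points have been secured by Choice \ref{assumption} and Proposition \ref{muinvar}, respectively, so the final step genuinely is a short deduction.
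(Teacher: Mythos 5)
Your overall architecture is the same as the paper's: squeeze $\Char(\X^{\bullet})\Char(\X^{\bullet}_K)$ between Kato's lower bound from \cite[Theorem 7.16]{shuron} and the upper bound of Corollary \ref{gold}, then use unique factorization in $\Lambda$ to split the resulting equality of products into equalities of factors. However, there is a genuine gap in how you dispose of the integers $n_1,n_2$ in Kato's inclusion. Those exponents are not free parameters that can be ``forced to be trivial'' by an analytic input: they are controlled by the image of the Galois representation on $T$, and the vanishing of $\mu(L^{\bullet})$ and $\mu(L^{(K)\bullet})$ says nothing about them. Concretely, writing $f,g$ for generators of $\Char(\X^{\bullet})$ and $\Char(\X^{\bullet}_K)$, the two inclusions give $L^{\bullet}L^{(K)\bullet}\mid fg$ and $fg\mid p^{n_1+n_2}L^{\bullet}L^{(K)\bullet}$, so $fg=p^{m}L^{\bullet}L^{(K)\bullet}$ up to a unit with $0\leq m\leq n_1+n_2$; nothing in your argument rules out $m>0$, and if $m>0$ you only recover the main conjecture up to a power of $p$. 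What the paper actually does at this point is show that one may take $n=0$ in Kato's theorem: since $N$ is square-free, the big-image hypothesis of \cite[Theorem 7.16]{shuron} can be relaxed (following the discussion after Theorem 2.5.2 of \cite{skinner}) to absolute irreducibility of $E[p]|_{\Gal(\overline{\Q}_p/\Q_p)}$, which holds at a supersingular prime by a theorem of Fontaine. This step is essential and is missing from your write-up.

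A secondary, smaller omission concerns the choice of $K$. Proposition \ref{muinvar} requires not just an odd number of inert primes dividing $N$ but also that $\overline{\rho}|_{G_q}$ be \emph{ramified} at each inert prime $q\mid N$. The paper secures this by invoking Ribet's level-lowering theorem (using the square-free conductor again) to produce a prime $q\mid N$ at which $E[p]$ is ramified, and then imposes the congruence conditions on $K$ making exactly that $q$ inert and $p$ and the remaining divisors of $N$ split. Your appeal to ``positive-proportion results'' addresses the nonvanishing of $L^{(K)\bullet}$ in Choice \ref{assumption}, but not this ramification constraint; you should make the Ribet step explicit.
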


\begin{proof}From \cite[Theorem 7.16]{shuron}, we know that $\Char(\X^\bullet)\supseteq(L^\bullet)$ and $\Char(\X^\bullet_K)\supseteq(L^{(K)\bullet})$ $\Gal(\overline{\Q}/\Q)$ surjects onto $\Aut E[p]$. Since $N$ is square-free, we only need $E[p]|_{\Gal(\overline{\Q}_p/\Q_p)}$ to be absolutely irreducible, as explained in \cite{skinner}[discussion after Theorem 2.5.2]. This irreducibility is part of a theorem of Fontaine \cite{fontaine}. For the entire theorem, see \cite[Theorem 2.6]{edixhoven}. A proof can be found in \cite[Section 6.8]{edixhoven}. Now we can choose our auxiliary $K$ so that it satisfies the assumptions for Proposition \ref{muinvar} \cite[Proof of Theorem 1.3, pg. 19]{wan}, Choice \ref{assumption}, and the assumption on the anticyclotomic root number (cf. Corollary \ref{gold}). By the square-free assumption of $N$ and Ribet's level-lowering theorem \cite[Theorem 8.2]{ribet}, $\left.E[p]\right|_{\Gal(\overline\Q_p/\Q_p)}$ is ramified at some prime $q$. Now choose $K$ so that $p$ and all prime divisors of $N$ different from $q$ split while $q$ is inert. All this only inflicts congruence conditions on roc choice of $K$, so that Corollary \ref{gold} till holds. Thus, we have $\Char(\X^\bullet)\Char(\X^\bullet_K)=(L^\bullet)(L^{(K)\bullet})$, whence $\Char(\X^\bullet)=(L^\bullet)$ for our chosen $K$.
\end{proof}

We now prove the corollaries concerning the leading term formula in the Birch and Swinnerton-Dyer conjecture.

\begin{theorem}(Birch and Swinnerton-Dyer formula at $p$ for rank $1$)

 Let $E$ be an elliptic curve with square-free conductor $N$ with supersingular reduction at $p\neq2$. 
Suppose that ${\ord_{s=1}L(E,s)=1}$. Then 

$$\left|\frac{L'(E,1)}{\Reg(E/\Q)\Omega}\right|_p=\left|\frac{\#\Sha(E/\Q)\prod_l c_l}{\#E(\Q)^2_\tor}\right|_p$$

\end{theorem}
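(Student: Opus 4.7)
The plan is to combine the cyclotomic $\sharp/\flat$ main conjecture proved in the preceding theorem with a specialization (control) theorem at the augmentation ideal and a $p$-adic Gross--Zagier formula in the supersingular case. Choose $\bullet\in\{\sharp,\flat\}$ so that $L^\bullet\neq 0$ (guaranteed by \cite[Prop. 6.14]{shuron}); the main theorem gives $\Char(\X^\bullet(E/\Q_\infty))=(L^\bullet)$. Since $\ord_{s=1}L(E,s)=1$, the interpolation formula forces $L^\bullet(0)=0$, so the relevant quantity on the analytic side is the leading term of $L^\bullet$ at $X=0$.

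First I would extract the analytic leading term. A supersingular $p$-adic Gross--Zagier formula expresses the first derivative of $L^\bullet$ at $X=0$ in terms of the cyclotomic $p$-adic height pairing of a Heegner point on $E$, which by the classical Gross--Zagier--Kolyvagin machinery is in turn comparable to $L'(E,1)/(\Reg(E/\Q)\Omega)$. In the $a_p=0$ case this is Kobayashi's formula \cite{kobayashigz}; the extension to general supersingular $a_p$ is supplied by the forthcoming work of Jetchev--Skinner--Wan \cite{jsw}. In either framework, the $p$-adic valuation of the leading term of $L^\bullet$ at $X=0$ equals, up to explicit local factors, the $p$-adic valuation of $L'(E,1)/(\Reg(E/\Q)\Omega)$.

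Next I would compute the algebraic leading term. Via a control theorem for $\Sel^\bullet(E/\Q_\infty)$ (essentially the specialization of the $\sharp/\flat$-Selmer group at the augmentation prime of $\Lambda=\Z_p[[X]]$), the image in $\Lambda/X\Lambda\cong\Z_p$ of a generator of $\Char(\X^\bullet(E/\Q_\infty))$ computes, up to the same local correction factors that appear on the analytic side (Tamagawa numbers and a torsion factor), the $p$-part of $\#\Sha(E/\Q)\prod_l c_l / \#E(\Q)_{\tor}^2$. The precise bookkeeping of local terms, including the interaction with $\ker\col_p^\bullet$ at the prime $p$, matches exactly with the Euler factors appearing in the analytic interpolation formula, so the extraneous factors cancel when one equates the two sides via the main conjecture.

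The principal obstacle is the first step: a $p$-adic Gross--Zagier formula for arbitrary supersingular $a_p$ is not established in this paper and must be imported from \cite{jsw} (or \cite{kobayashigz} in the $a_p=0$ case). A secondary, purely technical point is to verify that periods, regulators, and the choice of $\bullet\in\{\sharp,\flat\}$ are normalized compatibly on both sides so that no spurious powers of $p$ or sign ambiguities are introduced. Once these are in place, the proof reduces to matching $p$-adic valuations of the two leading terms using the equality $\Char(\X^\bullet(E/\Q_\infty))=(L^\bullet)$.
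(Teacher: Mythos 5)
Your proposal is correct and follows essentially the same route as the paper: the paper's proof is a one-line reduction, stating that the Iwasawa main conjecture established here is the only missing ingredient in the discussion of \cite[Proof of Corollary 1.3]{kobayashigz} (with \cite{jsw} cited in the introduction as the alternative input for general supersingular $a_p$), and your sketch simply unpacks the content of that discussion --- the supersingular $p$-adic Gross--Zagier formula, the control/specialization argument at the augmentation ideal, and the matching of leading terms via $\Char(\X^\bullet)=(L^\bullet)$. You also correctly identify that the general-$a_p$ Gross--Zagier input must be imported from \cite{jsw}, which matches the paper's own treatment.
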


\begin{proof}The Iwasawa main conjecture is the only missing ingredient in the discussion of \cite[Proof of Corollary 1.3]{kobayashigz}.
\end{proof}

\begin{theorem}(Birch and Swinnerton-Dyer formula at $p$ for rank $0$)\label{bestthingwehavedone!}

 Let $E$ be an elliptic curve with square-free conductor $N$ with supersingular reduction at $p\neq2$. Assume that $L(E,1)\neq0$. Then 

$$\left|\frac{L(E,1)}{\Omega}\right|_p=\left|\#\Sha(E/\Q)\prod_l c_l\right|_p,$$
\end{theorem}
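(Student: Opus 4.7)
The plan is to derive the $p$-part of rank-zero BSD from the main theorem by descending to the trivial character $X=0$, combining it with a control theorem for the $\sharp/\flat$-Selmer group and the Amice--V\'elu--Vi\v{s}ik interpolation. First choose $\bullet\in\{\sharp,\flat\}$ with $L^\bullet\neq 0$. Since $L(E,1)\neq 0$, the interpolation formulas $L_\xi(0)=(1-\xi^{-1})^2 L(E,1)/\Omega$ for $\xi\in\{\alpha,\beta\}$ together with the factorization $(L_\alpha,L_\beta)=(L^\sharp,L^\flat)\Log(X)$ force $L^\bullet(0)\neq 0$, so $X\nmid L^\bullet$. The main theorem $\Char(\X^\bullet(E/\Q_\infty))=(L^\bullet)$ then gives
$$\big|\X^\bullet/X\X^\bullet\big|_p=\big|L^\bullet(0)\big|_p^{-1}$$
up to a $p$-adic unit, by standard structure theory of finitely generated torsion $\Lambda$-modules.

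Next, I would establish a control theorem identifying $\X^\bullet/X\X^\bullet$, up to explicit finite corrections, with the Pontryagin dual of $\Sel(E/\Q)[p^\infty]$. The corrections are of two kinds: (i) at each prime $l\neq p$ of bad reduction, the usual mismatch of local conditions contributes the $p$-part of the Tamagawa number $c_l$; (ii) at $p$ itself, the modified local condition $E^\bullet$ (the exact annihilator of $\ker\col_p^\bullet$) specializes at the trivial character to produce a factor whose $p$-adic contribution exactly matches the Euler factor $(1-\alpha^{-1})^2(1-\beta^{-1})^2$ appearing analytically. This is the supersingular $\sharp/\flat$-analogue of Kobayashi's control theorem \cite[Theorem 7.4]{kobayashi}, and the argument there extends using the $\col^\bullet$ maps of Section 2.2, the trace-compatible points $c_{n,m}$ from Lemma \ref{tracecompatibility}, and the explicit form of $\H_n(0)$.

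Combining the control theorem with the descent exact sequence
$$0\to E(\Q)\tensor\Q_p/\Z_p\to\Sel(E/\Q)[p^\infty]\to\Sha(E/\Q)[p^\infty]\to 0,$$
and invoking Kato's theorem (which gives $E(\Q)$ finite, and hence $p$-adically trivial leftmost term, when $L(E,1)\neq 0$), one feeds in the interpolation formula. The Euler factors $(1-\alpha^{-1})^2(1-\beta^{-1})^2$ produced at $p$ by step (ii) of the control theorem cancel with the identical factors coming from the interpolation, and one obtains
$$\left|\frac{L(E,1)}{\Omega}\right|_p=\left|\#\Sha(E/\Q)\prod_l c_l\right|_p,$$
as required.

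The main obstacle is step (ii) of the control theorem: one must confirm that the specialization of $E^\bullet$ at $X=0$ contributes exactly the predicted Euler factor, with no leftover $p$-adic discrepancy. This is the most delicate part of Kobayashi's $a_p=0$ argument, and although the $\sharp/\flat$-framework has been built precisely to parallel it, the combinatorics of $\H_n(0)$ together with the recursion defining $c_{n,m}$ (in particular the interaction between the new trace relation $\tr c_{n,m}=a_p c_{n-1,m}-c_{n-2,m}$ and the matrix $\roots$) must be unpacked carefully to confirm the exact match when $a_p\neq 0$. The remaining ingredients (Kato's finiteness result for $E(\Q)$, the descent sequence, and the interpolation) are standard and require no modification.
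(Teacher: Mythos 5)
Your overall strategy --- invoke the main conjecture, specialize at $X=0$ via the structure theory of finitely generated torsion $\Lambda$-modules, run a control theorem that produces the Tamagawa factors at the bad primes, and use Kolyvagin to replace $\Sel$ by $\Sha$ --- is the same as the paper's. The genuine gap is your step (ii), the local analysis at $p$, which you flag as unresolved and for which the answer you predict is not the right one. You posit that the $\bullet$-local condition at $p$ specializes at the trivial character to contribute the Euler factor $(1-\alpha^{-1})^2(1-\beta^{-1})^2$, to be cancelled against ``the identical factors coming from the interpolation.'' But the interpolation factor $(1-\xi^{-1})^2$ attaches to $L_\xi$ for a single root $\xi$ and has negative, fractional $p$-adic valuation, so it cannot possibly be matched by the order of a finite local cokernel; moreover, in passing from $(L_\alpha,L_\beta)$ to $(L^\sharp,L^\flat)$ via $(L_\alpha,L_\beta)=(L^\sharp,L^\flat)\Log(X)$ these non-integral factors are already absorbed into $\Log(0)$, and what remains relating $L^\bullet(0)$ to $L(E,1)/\Omega_E$ is a $p$-adic unit (essentially factors built from $\#E(\mathbb{F}_p)=p+1-a_p$, which is prime to $p$ in the supersingular case; this is the content of the table before Proposition 6.14 of \cite{shuron} invoked in Lemma \ref{3}). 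Correspondingly, on the algebraic side there is no Euler-factor correction at $p$ at all: the paper proves in Lemma \ref{2} that the localization map $r_p$ into $H^1(\Q_{p,\infty},E[p^\infty])/E^\bullet_{\infty,p}$ is \emph{injective}, by a snake-lemma argument comparing $\ker\col^\bullet_p/X$ with $\ker\col^\bullet_{p,0}$ and using the surjectivity of $\mathbb{H}^1_{\Iw}/X\to H^1(\Q_p,T)$. Your proposed cancellation mechanism would double-count the Euler factors and, carried out literally, would give an answer off by a power of $p$; the correct statement to prove at $p$ is injectivity of $r_p$, not an Euler-factor identity.

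A secondary gap: your identity $\bigl|\X^\bullet/X\X^\bullet\bigr|_p=\bigl|L^\bullet(0)\bigr|_p^{-1}$ is only half of the standard fact. What the structure theory gives (Lemma \ref{4}) is $|f^\bullet(0)|_p$ as the ratio of the orders of the $\Gamma$-invariants and the $\Gamma$-coinvariants of $\Sel^\bullet(E/\Q_\infty)_p$, and it applies only after one proves the finiteness of the invariants (Lemma \ref{3}). Both groups must be computed and compared --- this is exactly what Lemmas \ref{1} and \ref{2} do, with the factors $\#E(\Q)_p$ and $|\ker g|$ cancelling in the ratio --- rather than reading off everything from the coinvariants alone.
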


Here, $\Sha(E/\Q)$ is the Tate--Shafarevich group of $E/\Q$, $c_l$ are the Tamagawa numbers, and $\Omega$ is the N\'{e}ron period. The proof of this theorem will occupy the final subsection of this paper.
Combined with the corresponding result in the ordinary case \cite[Theorem 3.35]{SU}, this gives the leading term formula up to powers of $2$ and bad primes.

\begin{subsection}{Proof of the Birch and Swinnerton-Dyer formula at $p$ in the rank $0$ case}
We now prove Theorem \ref{bestthingwehavedone!}.
We denote by $E_{\infty,p}^\sharp$ (resp. $E_{\infty,p}$) the exact annihilator of the Coleman maps in the $\Q$-cyclotomic case $\ker \col_p^\sharp$ (resp. $\ker \col_p^\flat$) under the local Tate pairing.
Similarly, we let $E_{0,p}^\sharp$ (resp. $E_{0,p}^\flat$) be the exact annihilator of $\ker \col_{p,0}^\sharp$ resp. $\ker \col_{p,0}^\flat$. See \cite[Definition 7.2]{shuron}\footnote{The maps $\col_{p,0}$ are denoted simplu $\col_0$ in \cite{shuron}.}. Note that $E_{0,p}^\sharp=E_{0,p}^\flat=E(\Q_p)\tensor \Q_p/\Z_p$. We let
$$ \G^\bullet(\Q_n)=\im\left(H^1(\Q_n,E[p^\infty])\longrightarrow\frac{H^1(\Q_{n,p},E[p^\infty])}{E^\bullet{n,p}}\times\prod_{v\neq p} \frac{H^1(\Q_{n,v},E[p^\infty])}{E(\Q_{n,v})\tensor \Q_p/\Z_p}\right)$$ for $n=0$ or $n=\infty$. We denote $\Gal(\Q_\infty/\Q)$ by $\Gamma$.

We then have a commutative diagram with exact sequences with right vertical map $g$:

$\begin{CD}0 @>>> \Sel(E,\Q)_p @>>> H^1(\Q,E[p^\infty])@>>>\calG^\bullet(\Q)@>>>0\\
@. @VVV @VVV @VVgV
\\
0@>>>\Sel^\bullet(E,\Q_\infty)_p^{\Gamma}@>>>H^1(\Q_\infty,E[p^\infty])^{\Gamma}@>>>\calG^\bullet(\Q_\infty)^{\Gamma}
\end{CD}$

For convenience, we let $\calP_E(\Q_n):=\frac{H^1(\Q_{n,p},E[p^\infty])}{E^\bullet{n,p}}\times\prod_{v\neq p} \frac{H^1(\Q_{n,v},E[p^\infty])}{E(\Q_{n,v})\tensor \Q_p/\Z_p}.$

We denote the map $\calP_E(\Q_0)\longrightarrow \calP_E(\Q_\infty)$ by $r$.

\begin{lemma}\label{1}We have:
$$\left|\Sel^\bullet(E,\Q_\infty)_p^{\Gal(\Q_\infty/\Q)}\right|=\frac{\left|\Sel(E,\Q)_p\right|}{\#E(\Q)_p}\times\left|\ker g\right|$$
\end{lemma}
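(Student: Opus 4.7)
The plan is to apply the snake lemma to the displayed commutative diagram and to identify the middle kernel via inflation--restriction. Write $a$ for the left vertical map and $b$ for the middle one. First I would invoke inflation--restriction
$$0 \to H^1(\Gamma, E(\Q_\infty)[p^\infty]) \to H^1(\Q, E[p^\infty]) \xrightarrow{b} H^1(\Q_\infty, E[p^\infty])^\Gamma \to H^2(\Gamma, E(\Q_\infty)[p^\infty]),$$
and observe that since $\Gamma \cong \Z_p$ has $p$-cohomological dimension one, the rightmost term vanishes. Hence $b$ is surjective, with $\ker b = H^1(\Gamma, E(\Q_\infty)[p^\infty])$.

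Next I would compute $|\ker b|$. By a theorem of Imai, $E(\Q_{p,\infty})[p^\infty]$ is finite in the supersingular case, and $E(\Q_\infty)[p^\infty]$ embeds into this local torsion group via completion at the unique prime above $p$, so it too is finite. For any finite discrete $\Gamma$-module $M$, the endomorphism $\gamma-1$ has kernel and cokernel of equal order, so
$$|\ker b| = |H^1(\Gamma, E(\Q_\infty)[p^\infty])| = |E(\Q_\infty)[p^\infty]^\Gamma| = \#E(\Q)_p,$$
the last equality from $\Q_\infty^\Gamma = \Q$.

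With $b$ now surjective, the snake lemma applied to the diagram (whose top row is short exact) produces the exact sequence
$$0 \to \ker a \to \ker b \to \ker g \to \coker a \to 0.$$
Multiplicativity of orders yields $|\coker a| = |\ker a|\cdot|\ker g|/|\ker b|$, while from the short exact sequence $0 \to \ker a \to \Sel(E,\Q)_p \to \mathrm{im}\, a \to 0$ we have $|\Sel^\bullet(E,\Q_\infty)_p^\Gamma| = |\mathrm{im}\, a|\cdot|\coker a| = \bigl(|\Sel(E,\Q)_p|/|\ker a|\bigr)\cdot|\coker a|$. Substituting and cancelling $|\ker a|$ yields the stated identity.

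The only non-formal input is the finiteness of $E(\Q_\infty)[p^\infty]$ in the supersingular setting; this is the main potential obstacle but is settled by Imai's local theorem at the unique prime above $p$, after which the rest of the argument is bookkeeping with the snake lemma and the Euler-characteristic principle that $|H^0(\Gamma,-)| = |H^1(\Gamma,-)|$ on finite $\Gamma$-modules.
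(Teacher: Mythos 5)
Your argument is correct and is precisely the proof the paper invokes: it writes out Greenberg's Lemma 4.3 (inflation--restriction to identify $\ker b$ and to show $\coker b=0$ since $\Gamma\cong\Z_p$ has $p$-cohomological dimension one, the snake lemma for the fundamental diagram, and $|H^1(\Gamma,M)|=|M^\Gamma|$ for finite $\Gamma$-modules), with the $\bullet$-Selmer condition in place of the ordinary one. The only input you supply independently is the finiteness of $E(\Q_\infty)[p^\infty]$ via Imai's theorem, which is what gives $|\ker b|=\#E(\Q)_p$; the paper handles the remaining finiteness needed for the order count by citing its Lemma \ref{3}.
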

\begin{proof} The proof of \cite[Lemma 4.3]{greenberg} with ``$\Sel$" replaced by ``$\Sel^\bullet$" works, taking into account Lemma \ref{3} below.
\end{proof}

\begin{lemma}\label{2}Denote by $c_l^{(p)}$ the $p$-component of the Tamagawa number $c_l$. We then have: 
$$\frac{1}{\left|\left(\Sel^\bullet(E,\Q_\infty)_p\right)_{\Gal(\Q_\infty/\Q)}\right|}=\frac{\displaystyle{\prod_{l \text{ bad}}c_l^{(p)}}}{\#E(\Q)_p}\times\frac{1}{\left|\ker g\right|}$$
\end{lemma}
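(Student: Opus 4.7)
The plan is to extract $|(\Sel^\bullet(E,\Q_\infty)_p)_\Gamma|$ by running the diagram used in Lemma \ref{1} one step further. Concretely, I would apply the long exact sequence in $\Gamma$-cohomology to the global-to-local short exact sequence
$$0 \longrightarrow \Sel^\bullet(E,\Q_\infty)_p \longrightarrow H^1(\Q_\infty,E[p^\infty]) \longrightarrow \calG^\bullet(\Q_\infty) \longrightarrow 0,$$
producing a six-term exact sequence in $\Gamma$-invariants and $\Gamma$-coinvariants. All six terms are finite because $\X^\bullet$ is a torsion $\Lambda$-module whose characteristic ideal does not meet the augmentation, which follows from our choice of $\bullet$ together with Proposition \ref{muinvar}. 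Multiplicativity of orders in the six-term sequence, combined with Lemma \ref{1} for the $(\Sel^\bullet(E,\Q_\infty)_p)^\Gamma$ factor, then expresses $|(\Sel^\bullet(E,\Q_\infty)_p)_\Gamma|$ in terms of the Herbrand-type quotients for $H^1(\Q_\infty,E[p^\infty])$ and $\calG^\bullet(\Q_\infty)$.

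The Herbrand quotient of $H^1(\Q_\infty,E[p^\infty])$ is controlled by inflation-restriction together with the finiteness of $E(\Q_\infty)_p$; the corresponding factor cancels the $\#E(\Q)_p$ appearing on the right-hand side of Lemma \ref{1}. The substantive part of the calculation is the prime-by-prime analysis of the Herbrand quotient of $\calG^\bullet(\Q_\infty)=\bigoplus_v \calG^\bullet_v(\Q_\infty)$. At good primes $\ell\neq p$, the Herbrand quotient is trivial. At a bad prime $l\neq p$, the local condition is the image of Kummer theory, and a standard N\'{e}ron-model computation, as in \cite[\S4]{greenberg} adapted to the discrete quotient, identifies the $\Gamma$-Euler characteristic with the $p$-part $c_l^{(p)}$ of the Tamagawa number. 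These are precisely the factors appearing on the right of the claimed formula.

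At $p$ itself, the local condition $E^\bullet_{n,p}$ is the exact annihilator of $\ker\col_p^\bullet$ under local Tate duality, and this choice is designed to make the Herbrand quotient at $p$ trivial --- the supersingular analogue of the observation at the end of Greenberg's ordinary-prime analysis. In concrete terms, the trace-compatibility of the points $c_{n,m}$ established in Lemma \ref{tracecompatibility}, together with the explicit description of $\ker\col^\bullet$ from subsection \ref{subsec:twovariable}, give enough control to verify this triviality. Assembling the six-term sequence with Lemma \ref{1} and the local computations above then produces the stated identity.

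The main obstacle will be the prime-$p$ contribution: one must verify, in the general supersingular case $a_p\neq 0$, that the $\sharp/\flat$-local conditions inherit precisely the $\Gamma$-cohomological triviality that makes the analogous argument work in the $a_p=0$ setting of \cite{kobayashi}. This ultimately reduces to checking that the Pontrjagin dual of $E^\bullet_{n,p}$ behaves compatibly under corestriction in the cyclotomic tower, which I would handle by unwinding the construction of $\col_p^\bullet$ and invoking the trace-compatibility of the system $\{c_{n,m}\}$.
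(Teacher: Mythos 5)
Your proposal has the right overall skeleton --- Greenberg-style bookkeeping with $\Gamma$-invariants and coinvariants plus a place-by-place local analysis, Tamagawa factors at bad $l\neq p$, triviality at good primes, and a special argument at $p$ tied to the choice of $E^\bullet$ --- and this is indeed the shape of the paper's proof, which runs the proof of \cite[Lemma 4.7]{greenberg} with $\Sel$ replaced by $\Sel^\bullet$ to get the formula with $\left|\ker r\right|$ in place of $\prod_l c_l^{(p)}$, and then evaluates $\ker r=\prod_v\ker r_v$ prime by prime. However, two of your steps fail as stated. First, the six terms of your $\Gamma$-(co)invariants sequence are not all finite: $H^1(\Q_\infty,E[p^\infty])^\Gamma$ has positive $\Z_p$-corank (it receives $H^1(\Q,E[p^\infty])$, of corank at least one, with finite kernel by inflation-restriction), and likewise the local term at $p$, $H^1(\Q_{p,\infty},E[p^\infty])/E^\bullet_{\infty,p}$, has $\Lambda$-corank one, so its invariants are infinite. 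The finiteness you invoke (torsionness of $\X^\bullet$ away from the augmentation ideal) only controls the two Selmer terms, so ``multiplicativity of orders'' and the ``Herbrand quotients'' of $H^1(\Q_\infty,E[p^\infty])$ and $\calG^\bullet(\Q_\infty)$ are not defined; one must instead compare the level-$\Q$ and level-$\Q_\infty$ diagrams as Greenberg does, so that only the finite objects $\Sel(E,\Q)_p$, $\ker g$ and $\ker r$ ever enter. Relatedly, writing $\calG^\bullet(\Q_\infty)$ as a direct sum of local pieces is not automatic: it amounts to surjectivity of the global-to-local map onto $\calP_E(\Q_\infty)$, which requires the cotorsion property of the $\bullet$-Selmer group together with Poitou--Tate, and should be flagged as an input.

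Second, and more seriously, the one genuinely new point of this lemma --- the place $v=p$, where \cite[Lemma 3.4]{greenberg} is unavailable because ordinarity is essential there --- is left as a plan rather than an argument. What must be shown is the injectivity of $r_p\colon H^1(\Q_p,E[p^\infty])/(E(\Q_p)\tensor\Q_p/\Z_p)\to H^1(\Q_{p,\infty},E[p^\infty])/E^\bullet_{\infty,p}$, and ``unwinding the construction of $\col_p^\bullet$ and invoking trace-compatibility of the $c_{n,m}$'' does not by itself produce this. The paper's proof is a concrete duality and snake-lemma argument in the style of \cite[Proposition 9.2]{kobayashi}: injectivity of $r_p$ is Pontryagin-dual to surjectivity of $(\ker\col^\bullet_p)/X\to\ker\col^\bullet_{p,0}$, which follows because the map $\bH^1_{\Iw}/X\to H^1(\Q_p,T)$ is surjective by \cite[Lemma 2.3]{shuron} while the induced map $(\bH^1_{\Iw}/\ker\col^\bullet_p)/X\to H^1(\Q_p,T)/\ker\col^\bullet_{p,0}$ is an isomorphism by the construction of the Coleman maps; the snake lemma then gives the required surjectivity. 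Until you supply this argument (or an equivalent one valid for $a_p\neq0$), the $p$-local contribution --- precisely the step where the supersingular $\sharp/\flat$ theory could break --- is not established.
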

\begin{proof}
The lemma with the Tamagawa factors ``$\displaystyle{\prod_{l \text{ bad}} c_l^{(p)}}$" replaced with ``$\left|\ker(r)\right|$" follows from the proof of \cite[Lemma 4.7]{greenberg}, again with ``$\Sel$" replaced with ``$\Sel^\bullet$". Thus, it remains to prove that
$$\left|\ker (r)\right|= \displaystyle{\prod_{l \text{ bad}} c_l^{(p)}},$$ which we do prime by prime. Let $r_v$ be the component of $r$ at $v$. 

If $v \neq p$ is a good reduction prime, we know that $\ker (r_v)$ is trivial by \cite[Proof of Lemma 3.3]{greenberg}.

If $v=l$ is a bad reduction prime, the discussion after \cite[Lemma 3.3]{greenberg} shows that $\left|\ker(r_l)\right|=c_l^{(p)}$.

It thus remains to treat the case $v=p$. (Note that the methods in \cite[Lemma 3.4]{greenberg} do not apply since ordinarity is a key assumption there.) We want to show that the map
$$\frac{H^1(\Q_p,E[p^\infty])}{E(\Q_p)\tensor\Q_p/\Z_p}\overset{r_p}\longrightarrow\frac{H^1(\Q_{p,\infty},E[p^\infty])}{E^\sharp_{\infty,p}}$$ is injective.

As in \cite[Proof of Proposition 9.2]{kobayashi}, we consider the following commutative diagram with exact sequences, were $X$ is the cyclotomic variable:

$\begin{CD}@. \frac{\ker \col^\bullet_p}{X}@>>> \frac{\mathbb{H}^1_{\text{Iw}}}{X}@>>>\frac{\left(\mathbb{H}^1_{\text{Iw}}/\ker\col^\bullet_p\right)}{X}@>>>0\\
@.@VVV@VVV@VVV\\
0@>>>\ker\col^\bullet_{p,0}@>>>H^1(k_0,T)@>>>\frac{H^1(k_0,T)}{\ker \col_{p,0}^\bullet},
\end{CD}$

where $\mathbb{H}^1_{\text{Iw}}=\varprojlim_n H^1(\Q_{p,n},T)$.

By construction, the right vertical map is an isomorphism. The middle vertical map is a surjection by \cite[Lemma 2.3]{shuron}. The snake lemma thus shows that the left vertical map is surjective. Taking Pontryagin duals, we see that $r_p$ is injective, as claimed.
\end{proof}

\begin{lemma}\label{3}
$\Sel^\bullet(E,\Q_\infty)_p^{\Gal(\Q_\infty/\Q)}$ is finite if $L(E,1)\neq0$.
\end{lemma}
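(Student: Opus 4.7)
The plan is to reduce finiteness of $\Sel^\bullet(E,\Q_\infty)_p^{\Gal(\Q_\infty/\Q)}$ to a non-vanishing statement about $L^\bullet(0)$, and then to invoke the main theorem of the paper. The first step is to use the Amice--Vélu/Višik interpolation formula recalled in Section 2.1: evaluating at $X=0$ gives
$$L_\xi(0)=\left(1-\frac{1}{\xi}\right)^{\!2}\frac{L(E,1)}{\Omega_E}\qquad(\xi\in\{\alpha,\beta\}).$$
By Hasse--Weil, $|\alpha|=|\beta|=\sqrt{p}$, so $\alpha,\beta\neq1$ and the Euler factors are nonzero. Hence the hypothesis $L(E,1)\neq 0$ forces $L_\alpha(0)$ and $L_\beta(0)$ to both be nonzero.

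Next, I would invoke the decomposition $(L^\sharp(X),L^\flat(X))\Log(X)=(L_\alpha(X),L_\beta(X))$. Specializing at $X=0$ gives $(L^\sharp(0),L^\flat(0))\Log(0)\neq(0,0)$, so at least one of $L^\sharp(0)$ or $L^\flat(0)$ is nonzero. We may then refine the choice of $\bullet\in\{\sharp,\flat\}$ so that $L^\bullet(0)\neq 0$; this is compatible with the main theorem's requirement $L^\bullet\neq 0$ in $\Lambda$, which it automatically implies.

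With the right $\bullet$ fixed, the main theorem yields $\Char_\Lambda(\X^\bullet)=(L^\bullet)$. Since $\X^\bullet$ is a finitely generated torsion $\Lambda$-module, the structure theorem identifies it (up to pseudo-isomorphism) with a direct sum of cyclic modules $\Lambda/(p^{a_i})\oplus\Lambda/(f_j^{b_j})$ whose product of elementary divisors generates $(L^\bullet)$. The quotient $\X^\bullet/X\X^\bullet$ is finite if and only if $X$ is coprime to every $f_j$, equivalently $L^\bullet(0)\neq 0$, which is exactly what we have arranged. Pontryagin duality then identifies the $\Gamma$-invariants of $\Sel^\bullet(E/\Q_\infty)_p$ with the $\Gamma$-coinvariants $\X^\bullet/X\X^\bullet$, so $\Sel^\bullet(E,\Q_\infty)_p^{\Gal(\Q_\infty/\Q)}$ is finite.

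The only delicate point is the coordination of the choice of $\bullet$: the main theorem is stated for any $\bullet$ with $L^\bullet\neq 0$, whereas this lemma needs the stronger $L^\bullet(0)\neq 0$. The interpolation argument above guarantees that such a choice exists as soon as $L(E,1)\neq 0$, so no real obstruction arises. (Alternatively, one can avoid the full main theorem and run the same argument using only Kato's divisibility $\Char(\X^\bullet)\supseteq(p^nL^\bullet)$ from \cite[Theorem 7.16]{shuron}, since $L^\bullet(0)\neq 0$ still forces the characteristic generator to be nonvanishing at $X=0$.)
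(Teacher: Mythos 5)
Your route is not the one the paper's proof actually takes, but it is the alternative the paper explicitly acknowledges in its last sentence (``one can also show that $\X^\bullet/X\X^\bullet$ is finite directly by observing that $L(E,1)\neq0$ implies $f^\bullet(0)\neq0$''). The paper's primary argument is purely algebraic: it invokes the Small Control Theorem proved immediately afterwards to reduce finiteness of $\X^\bullet/X\X^\bullet$ to finiteness of $\X_0^\bullet/X\X_0^\bullet$, which follows from $L(E,1)\neq 0$ via the finiteness of $\Sel(E/\Q)_p$ (Kato--Kolyvagin). Your argument instead goes through the analytic side and the (already proved) main conjecture, or Kato's divisibility; both of those final steps are fine, and the parenthetical fallback to $\Char(\X^\bullet)\supseteq(p^nL^\bullet)$ is the cleaner of the two since it avoids any appearance of circularity.

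The weak point is exactly the one you flag and then dismiss: the coordination of $\bullet$. From $L_\alpha(0),L_\beta(0)\neq 0$ and the invertibility of $\Log(0)=\smat{a_p&1\\-p&0}^{-2}\smat{-1&-1\\\beta&\alpha}$ you only get that \emph{at least one} of $L^\sharp(0),L^\flat(0)$ is nonzero, so you prove the lemma only for \emph{some} $\star\in\{\sharp,\flat\}$, not for the $\bullet$ already pinned down by Choice \ref{assumption} (which must also satisfy the nonvanishing condition for the quadratic twist $E^{(K)}$ and is used throughout the rank-zero BSD computation). ``Refining the choice'' is not free: you would have to re-verify Choice \ref{assumption} for the new $\bullet$, and nothing in your argument rules out that the $\bullet$ with $L^\bullet(0)\neq 0$ is precisely the one for which $L^{\bullet_K}=0$. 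The paper closes this gap by citing the explicit table before Proposition 6.14 of \cite{shuron}, which computes $L^\sharp(0)$ and $L^\flat(0)$ individually in terms of $L(E,1)/\Omega_E$ and shows the relevant value is nonzero for the fixed $\bullet$; alternatively, the Small Control Theorem route works uniformly in $\bullet$ and avoids the issue altogether. You should patch your proof with one of these two ingredients.
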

\begin{proof}It suffices to prove that $\X^\bullet/X\X^\bullet$ is finite. This follows from the small control theorem below, which implies that $\X^\bullet/X\X^\bullet$ is finite if and only if $\X_0^\bullet/X\X_0^\bullet$ is, where $\X_0^\bullet=\Hom(\Sel^\bullet(E,\Q),\Q_p/\Z_p)$. $\X_0^\bullet/X\X_0^\bullet$ being finite is automatic by our assumption. One can also show that $\X^\bullet/X\X^\bullet$ is finite directly by observing that $L(E,1)\neq0$ implies that $f^\bullet(0)\neq0$ using \cite[table before Proposition 6.14]{shuron}.
\end{proof}

\begin{lemma}(Small Control Theorem) The natural morphism 
$\X^\bullet/X\X^\bullet\longrightarrow\X_0^\bullet/X\X_0^\bullet$ is surjective and has finite kernel.
\end{lemma}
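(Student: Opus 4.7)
The plan is to prove the Small Control Theorem by Greenberg's standard strategy: compare the $\bullet$-Selmer groups over $\Q$ and $\Q_\infty$ directly, apply the snake lemma to control the natural restriction map on Selmer groups, and then Pontryagin-dualize. Concretely, consider the commutative diagram
\[
\begin{CD}
0 @>>> \Sel^\bullet(E,\Q) @>>> H^1(\Q_S/\Q,E[p^\infty]) @>>> \calP^\bullet(\Q) \\
@. @VV{s}V @VV{h}V @VV{g}V \\
0 @>>> \Sel^\bullet(E,\Q_\infty)_p^{\Gamma} @>>> H^1(\Q_S/\Q_\infty,E[p^\infty])^{\Gamma} @>>> \calP^\bullet(\Q_\infty)^{\Gamma}
\end{CD}
\]
where $S$ contains $p$ and all bad primes, $\calP^\bullet$ is the product of local cohomology quotients cutting out the $\bullet$-Selmer condition, and the bottom row is obtained from the analogous $\Q_\infty$ sequence by taking $\Gamma$-invariants. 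Since $X$ acts trivially on $\X_0^\bullet$, we have $\X_0^\bullet/X\X_0^\bullet = \X_0^\bullet$, and Pontryagin-dualizing $s$ yields exactly the desired natural map $\X^\bullet/X\X^\bullet \to \X_0^\bullet/X\X_0^\bullet$.

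First I would analyze the middle map $h$. By inflation--restriction, $\ker h = H^1(\Gamma, E(\Q_\infty)[p^\infty])$ is finite, and $\coker h \hookrightarrow H^2(\Gamma, E(\Q_\infty)[p^\infty]) = 0$ since $\Gamma\cong\Z_p$ has $p$-cohomological dimension one (and the supersingular formal group, of height two, ensures $E(\Q_\infty)[p^\infty]$ is finite). Next, I would analyze $g$ place by place: at $v\nmid Np$, $\ker g_v = 0$ by standard arguments; at bad primes $v\mid N$, $\ker g_v$ is finite and bounded by the $p$-part of the Tamagawa factor $c_v$; at $v=p$, the injectivity of $g_p$ is exactly what the proof of Lemma~\ref{2} already establishes, via an auxiliary snake lemma on Coleman-map kernels using the tower surjectivity $\bbH^1_\Iw \twoheadrightarrow H^1(k_0,T)$ of \cite[Lemma 2.3]{shuron}. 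Consequently $\ker g$ is finite. The snake lemma applied to the main diagram then yields $\ker s\hookrightarrow \ker h$ and $\coker s \hookrightarrow \ker g$, both finite, so $s$ is a pseudo-isomorphism.

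Dualizing, finite $\coker s$ gives that $\X^\bullet/X\X^\bullet \to \X_0^\bullet$ has finite kernel, as required. For surjectivity one needs $s$ itself to be injective (not merely finite-kernel): under the running hypotheses ($p>2$ of supersingular reduction), Mazur's torsion theorem forces $E(\Q)[p] = 0$ for $p\geq 5$, so $\ker h = 0$ and hence $\ker s = 0$; the $p=3$ case (the only one in which $a_p\neq 0$) requires a direct verification that any potential class in $\ker s$ is actually zero once the Selmer condition is imposed. The main obstacle is the injectivity of the local map $g_p$: in the ordinary case it follows from a formal-group inclusion, but here the $\bullet$-local condition is defined as the Pontryagin-dual of an abstract Coleman-map kernel, so it must be extracted from the fine structural properties of $\col_p^\bullet$ developed in Section~\ref{subsec:twovariable}.
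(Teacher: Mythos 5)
Your proposal follows essentially the same route as the paper, which simply invokes the proof of Kobayashi's control theorem (Theorem 9.3 of \cite{kobayashi}) with $n=0$, the only new input being the injectivity of the local map at $p$ established via the Coleman-map snake lemma of Lemma \ref{2} --- exactly the diagram chase you spell out. One correction: Mazur's torsion theorem does \emph{not} force $E(\Q)[p]=0$ for $p\geq 5$ (rational $5$- and $7$-torsion occur), so your injectivity step for $s$ is misjustified as written; the right argument is that supersingularity at $p$ gives $E(\Q_{p,\infty})[p^\infty]=0$ (indeed already $E(\Q)[p]\neq 0$ would force $p\mid\#E(\Fp)=p+1-a_p$, hence $a_p\equiv 1$, contradicting $p\mid a_p$), so $E(\Q_\infty)[p^\infty]=0$ and $\ker h=0$ uniformly, with no separate $p=3$ case needed.
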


\begin{proof}The proof of \cite[Theorem 9.3]{kobayashi} with $n=0$ with the adjustment that $r_p$ is injective (cf. Lemma \ref{2}) works. (Note that in \cite[diagram on top of page 27]{kobayashi}, the terms on the right should be $\calP_E(\Q)$ and $\calP_E(\Q_\infty)$ rather than $\displaystyle{\prod_v} \mathscr{H}^+(K_{n,v})$.)
\end{proof}

\begin{lemma}\label{4}
Let $f^\bullet$ be a generator of the characteristic ideal of $\X^\bullet$. Then

$$\left|f(0)\right|_p=\frac{\left|\Sel^\bullet(E,\Q_\infty)_p^{\Gal(\Q_\infty/\Q)}\right|}{\left|\left(\Sel^\bullet(E,\Q_\infty)_p\right)_{\Gal(\Q_\infty/\Q)}\right|}.$$
\end{lemma}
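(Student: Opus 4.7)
The plan is to reduce to the structure theorem for finitely generated torsion $\Lambda$-modules and then translate between $\X^\bullet$ and $\Sel^\bullet$ via Pontryagin duality. First, by Lemma \ref{3}, $\Sel^\bullet(E,\Q_\infty)_p^{\Gamma}$ is finite, which by Pontryagin duality is equivalent to $\X^\bullet/X\X^\bullet$ being finite. This in turn forces $f^\bullet(0)\neq 0$, because otherwise $X\mid f^\bullet$, and any elementary quotient $\Lambda/(g)$ with $X\mid g$ would contribute an infinite $X$-cokernel.

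Next, invoke the structure theorem to obtain a pseudo-isomorphism $\varphi\colon \X^\bullet \to M := \bigoplus_i \Lambda/(f_i)$, where $f^\bullet = u\prod_i f_i$ for some unit $u\in \Lambda^\times$, and each $f_i$ is either $p^{\mu_i}$ or a distinguished polynomial. For each elementary summand $E_i = \Lambda/(f_i)$, a direct computation shows that $E_i[X]=0$ while $E_i/XE_i \cong \Z_p/(f_i(0))$ has order $|f_i(0)|_p^{-1}$; indeed, multiplication by $X$ on $\Lambda/(f_i)$ is injective precisely because $X\nmid f_i$, and its cokernel is $\Lambda/(f_i,X) = \Z_p/f_i(0)$. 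Taking the product over $i$ yields
\[
|f^\bullet(0)|_p \;=\; \prod_i |f_i(0)|_p \;=\; \frac{|M[X]|}{|M/XM|}.
\]

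Now apply the snake lemma to the multiplication-by-$X$ map on the four-term exact sequence
\[
0 \to \ker\varphi \to \X^\bullet \to M \to \coker\varphi \to 0.
\]
Since $\ker\varphi$ and $\coker\varphi$ are finite, they have equal $X$-kernel and $X$-cokernel orders (finite modules have trivial Euler characteristic with respect to any endomorphism), so the $X$-Euler characteristic of $\X^\bullet$ and $M$ coincide:
\[
\frac{|\X^\bullet[X]|}{|\X^\bullet/X\X^\bullet|} \;=\; \frac{|M[X]|}{|M/XM|} \;=\; |f^\bullet(0)|_p.
\]

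Finally, Pontryagin duality converts $X$-invariants of $\X^\bullet$ into $\Gamma$-coinvariants of $\Sel^\bullet$ and vice versa, giving
\[
|\X^\bullet[X]| \,=\, |\Sel^\bullet(E,\Q_\infty)_{p,\Gamma}|, \qquad |\X^\bullet/X\X^\bullet| \,=\, |\Sel^\bullet(E,\Q_\infty)_p^{\Gamma}|,
\]
which combined with the displayed equation gives the claimed formula (up to reconciling conventions for the ratio). The main obstacle is the bookkeeping around pseudo-null modules in Step 3; apart from that, each ingredient is a standard consequence of the $\Lambda$-module structure theorem together with the finiteness input from Lemma \ref{3}.
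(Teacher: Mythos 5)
Your proof is correct and is essentially the paper's own argument: the paper disposes of this lemma by citing the general fact about finitely generated torsion $\Lambda$-modules (Greenberg's Lemma 4.2), and your structure-theorem computation of the multiplication-by-$X$ Euler characteristic, its invariance under pseudo-isomorphism, and the final Pontryagin-duality translation is exactly the standard proof of that cited fact, with the finiteness input supplied by Lemma \ref{3} just as in the paper. One remark on the point you left as ``reconciling conventions'': your orientation is the correct one, namely $|f^\bullet(0)|_p=\#\X^\bullet[X]\big/\#(\X^\bullet/X\X^\bullet)=\#\left(\Sel^\bullet(E,\Q_\infty)_p\right)_{\Gamma}\big/\#\Sel^\bullet(E,\Q_\infty)_p^{\Gamma}$, so the ratio displayed in the lemma as printed is inverted rather than a matter of convention; this can be checked on $\X=\Lambda/(X-p)$, and it is also what the rank-zero application requires, since multiplying Lemmas \ref{1} and \ref{2} gives $\#\Sel^{\Gamma}/\#\Sel_{\Gamma}=\#\Sha(E/\Q)_p\prod_l c_l^{(p)}$ while $|f^\bullet(0)|_p$ must come out equal to $\left|\#\Sha(E/\Q)\prod_l c_l\right|_p$, the reciprocal of that quantity.
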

\begin{proof}This is a general fact about finitely generated $\Lambda$-modules, using the assumption that $\Sel^\bullet(E,\Q_\infty)_p^{\Gal(\Q_\infty/\Q)}$ is finite, which Lemma \ref{3} guarantees. See \cite[Lemma 4.2]{greenberg}
\end{proof}

\begin{proof}(Proof of Theorem)
The theorem now follows using Lemma \ref{4} and multiplying the terms in Lemma \ref{1} and Lemma \ref{2}, and noting that since $E[p]$ is irreducible as a Galois representation, we necessarily have $\left|E(\Q)_p\right|=1$. The fact that we can replace $\Sel$ by $\Sha$ then follows from \cite[Sledstvie 2]{kolyvagin}.
\end{proof}
\end{subsection}

\begin{description}
\item[Acknowledgments]
\small We are grateful to Xin Wan for being very pessimistic about the feasibility of the $a_p\neq0$ case, and for patiently answering numerous questions. We also thank Christopher Skinner for helpful discussions, and  Richard Taylor who suggested doing a small seminar on this topic. Our thanks also go to Masato Kurihara, Antonio Lei, Yuichi Hirano, Wei Zhang, and Masataka Chida for pointing out inaccuracies in an earlier version of the manuscript. We thank Tadashi Ochiai for improving an earlier version, for a helpful piece of advice about the exposition, and for pointing out some errors in said earlier version of this work.\end{description}
\newpage


\begin{thebibliography}{PTW02}

\bibitem[BS15]{manjarul}M. Bhargava, A. Shankar: \textit{Ternary cubic forms having bounded invariants, and the existence of a positive proportion of elliptic curves having rank 0}, Annals of Mathematics \textbf{181} (2015), no 2, 587-621.
\bibitem[BSZ]{bsz}M. Bhargava, C. Skinner, W. Zhang: \textit{A majority of elliptic curves over $\Q$ satisfy the Birch and Swinnerton-Dyer conjecture}, preprint. Available at http://arxiv.org/abs/1407.1826.
\bibitem[E92]{edixhoven}B. Edixhoven, \textit{The weight in Serre's conjectures on modular forms}, Inventiones Mathematicae \textbf{109}, vol. 1 (1992), pp. 563--594.
\bibitem[El87]{elkies} N. Elkies: \textit{The existence of infinitely many supersingular primes for every elliptic curve over $\Q$}, Inventiones Mathematicae \textbf{89} (1987), no. 3, 561-567.
\bibitem[Fo79]{fontaine} J.-M. Fontaine: \textit{Two letters to Serre}
\bibitem[Gr97]{cetraro}R. Greenberg:\textit{Iwasawa theory for elliptic curves}, Lecture Notes in Mathematics, pp. 51--144. Arithmetic theory of elliptic curves. Lectures from the 3rd C.I.M.E. Session held in Cetraro, July 12--19, 1997. Edited by C. Viola. Lecture Notes in Mathematics, \textbf{1716}. Springer-Verlag, Berlin; Centro Internazionale Matematico Estivo (C.I.M.E.), Florence, 1999. 
\bibitem[Gr95]{greenberg}R. Greenberg:\textit{Iwasawa theory and p-adic deformations of motives}. Motives (Seattle, WA, 1991), 19--223, Proc. Sympos. Pure Math., \textbf{55}, Part 2, Amer. Math. Soc., Providence, RI, 1994.
\bibitem[Ha87]{haran}S. Haran: \textit{$p$-adic $L$-functions for modular forms}, Compositio Mathematica, \textbf{62}, no.1 (1987), p. 31-46.
\bibitem[JSW]{jsw} D. Jetchev, C. Skinner, X. Wan: \textit{The Birch-Swinnerton-Dyer Formula For Elliptic Curves of Analytic Rank One and Main Conjectures}, http://arxiv.org/pdf/1512.06894v1.pdf.
\bibitem[Ka04]{kato}K. Kato: \textit{$p$-adic Hodge theory and values of zeta functions of modular forms}, Ast\'{e}risque \textbf{295} (2004), 117-290.
\bibitem[Ki07]{kimparity} B.D. Kim: \textit{The parity conjecture for elliptic curves at supersingular reduction primes}, Compositio Mathematica \textbf{143} (2007), 47-72.
\bibitem[Ki13]{kim} B.D. Kim: \textit{$\pm/\pm$ Selmer groups over the maximal $\Z_p^2$-extension of an imaginary quadratic field}, The Canadian Journal of Mathematics Vol. 66(4), 2014, pp. 826--843.
\bibitem[Ki16]{chanho} C.H. Kim: \textit{Anticyclotomic Iwasawa invariants and congruences of modular forms}, to appear in Asian Journal of Mathematics.
\bibitem[KLZ]{klz}G. Kings, D. Loeffler and S.L. Zerbes: \textit{Rankin--Eisenstein classes for modular forms, submitted}, http://arxiv.org/abs/1501.03289.
\bibitem[Ko03]{kobayashi}
S. Kobayashi: \textit{Iwasawa theory for elliptic curves at supersingular primes}, Inventiones Mathematicae \textbf{152}  (2003), no.1, 1-36.
\bibitem[Ko12]{kobayashigz} S. Kobayashi: \textit{The $p$-adic Gross-Zagier formula at supersingular primes}, Inventiones Mathematicae \textbf{191} (2013), no. 3, 527--629.
\bibitem[Ko88]{kolyvagin}V. Kolyvagin, \textit{Finiteness of $E(\Q)$ and $\Sha(E,\Q)$ for a subclass of Weil curves}, Izvestiya Akademii Nauk SSSR. Seriya Matematicheskaya \textbf{52}(3): pp 522-540, 670--671.
\bibitem[Ku03]{kurihara}M. Kurihara: \textit{On the Tate Shafarevich groups over cyclotomic fields of an elliptic curve with supersingular reduction I}, Inventiones Mathematicae \textbf{149} (2002), 195-224.
\bibitem[Lang]{lang}S. Lang: \textit{Cyclotomic Fields}, Graduate Texts in Mathematics \textbf{121} (1990), Springer, New York.
\bibitem[Le14]{lei} A. Lei: \textit{Factorisation of two-variable $p$-adic $L$-functions}, Canadian Mathematical Bulletin, 57(4), 2014, 845-852.
\bibitem[LLZ14]{llzannals} A. Lei, D. Loeffler, S. Zerbes: \textit{Euler systems for Rankin--Selberg convolutions of modular forms,} Ann. of Math. \textbf{180} (2014), no. 2, 653--771.
\bibitem[LZ16]{lzcoleman} D. Loeffler, S. Zerbes: \textit{Rankin--Eisenstein classes in Coleman families}, submitted. http://arxiv.org/abs/1506.06703.
\bibitem[LZ14]{lzzp2} D. Loeffler, S. Zerbes:  \textit{Iwasawa theory and $p$-adic $L$-functions over $\Z_p^2$-extensions,} Int. J. Number Theory \textbf{10} (2014), no. 8, 2045--2095. 
\bibitem[Lo12]{loeffler} D. Loeffler: \textit{$p$-adic integration on ray class groups and non-rdinary $p$-adic $L$-functions}, in Iwasawa Theory 2012, Contributions in Mathematical an dComputational Sciences, Vol. 7, pg. 357--378.
\bibitem[Ma73]{manin}Ju. Manin: \textit{Periods of Parabolic Forms and $p$-adic Hecke series},  Sbornik \textbf{21} (1973), no. 3.
\bibitem[Ma72]{mazur} B. Mazur: \textit{Rational points of abelian varieties with values in towers of number fields}, Inventiones Mathematicae \textbf{18} (1972), 183-266.
\bibitem[MSD]{msd}B. Mazur, P. Swinnerton-Dyer:  \textit{Arithmetic of Weil curves}, Inventiones Mathematicae \textbf{25} (1974), 1-61.
\bibitem[Po03]{pollack} R. Pollack: \textit{The p-adic L-function of a modular form at a supersingular prime}, Duke Mathematical Journal \textbf{118} (2003), no.3, 523-558.
\bibitem[PR03]{bernadette} B. Perrin-Riou: \textit{Arithm\'{e}tique des courbes elliptiques \`{a} r\'{e}duction supersinguli\`{e}re}, Experimental Mathematics \textbf{12} (2003), 155-186.
\bibitem[PR90]{PR} B. Perrin-Riou: \textit{Th\'{e}orie d'Iwasawa $p$-adique loale te globale}, Invent. Math. \textbf{99}, 247--292 (1990).
\bibitem[P+R04]{P+R} R. Pollack, K. Rubin: \textit{The main conjecture for CM elliptic curves at supersingular primes}, Annals of Mathematics \textbf{159} (2004), no.1, 447 -464 .
\bibitem[PW11]{pollackweston}R. Pollack, T. Weston: \textit{Mazur--Tate elements of non-ordinary modular forms}, Duke Mathematical Journal \textbf{156} (2011) no. 3, 349--385.
\bibitem[Ri]{ribet} K. Ribet: \textit{On modular representations of $\Gal{\overline\Q/\Q}$ arising from modular forms}, Inventiones Mathematicae, \textbf{100} (1990),no. 2, pp. 431--476.
\bibitem[Ro84]{rohrlich}D. Rohrlich: \textit{On $L$-functions of elliptic curves and cyclotomic towers}, Inventiones Mathematicae \textbf{75} (1984), 409-423.
\bibitem[Ru91]{rubin}K. Rubin: \textit{The ``main conjectures" of Iwasawa theory for imaginary quadratic fields}, Invent. Math. \textbf{103} (1991), 25--68.
\bibitem[Silv]{silverman} J. Silverman: \textit{The Arithmetic of Elliptic Curves}, Second Edition, Graduate Texts in Mathematics \textbf{106} (2009), Springer, New York.
\bibitem[Sk14]{skinner}C. Skinner: \textit{Multiplicative reduction and the cyclotomic main conjecture for $\GL_2$} arXiv:1407.1093.
\bibitem[SU14]{SU} C. Skinner, E. Urban: \textit{The Iwasawa mainc onjectures for $GL_2$}, Inventiones Mathematicae \textbf{195} (2014), no. 1, 1--277.
\bibitem[SZ14]{skinnerzhang} C. Skinner, W. Zhang: \textit{Indivisibility of Heegner points in the multiplicative case}, http://arxiv.org/pdf/1407.1099v1.pdf.
\bibitem[Sp12]{shuron}F. Sprung: \textit{Iwasawa theory for elliptic curves at supersingular primes: A pair of Main Conjectures}, Journal of Number Theory \textbf{132} (2012), no. 7.
\bibitem[Sp15]{surprisingsha} F. Sprung: \textit{On pairs of $p$-adic $L$-functions of modular forms of weight two}, submitted.
\bibitem[Ve03]{venjakob} O. Venjakob: \textit{A non-commutative Weierstrass preparation
theorem and applications to Iwasawa theory}, Journal f\"{u}r die reine und angewandte Mathematik (Crelle's Journal), \textbf{559} (2003), 153--191.
\bibitem[Wa14]{wan}X. Wan: \textit{Iwasawa Main Conjecture for Supersingular Elliptic Curves}, http://arxiv.org/abs/1411.6352
\bibitem[Wa14+]{wanrankin}X. Wan: \textit{Iwasawa Main Conjecture for Rankin-Selberg p-adic L-functions: non-ordinary case}, http://arxiv.org/abs/1408.4044
\end{thebibliography}
\end{document}